%% file: arxiv.tex
\documentclass[11pt]{article}

\usepackage{setspace}
\usepackage[utf8]{inputenc}

\usepackage{amssymb,amsmath,amsthm,amsfonts}
\usepackage[margin=1truein]{geometry}

\usepackage{xcolor}
 \definecolor{bordeaux}{RGB}{100,0,50}
 \definecolor{darkblue}{RGB}{25, 25, 112}

\usepackage[pdftex,colorlinks=true,linkcolor=blue,citecolor=blue,urlcolor=red,unicode=true,hyperfootnotes=false,bookmarksnumbered]{hyperref}
\hypersetup{linkcolor={{blue!70!black}}, citecolor={black}, urlcolor={blue!70!black}}
\hypersetup{linkcolor=bordeaux, citecolor = darkblue, urlcolor = darkblue}

\input{macros}

\title{A logical approach to concentration}
\author{Michael Benedikt\thanks{Department of Computer Science, University of Oxford,
Oxford OX1 3QD, UK} \and Maksim Zhukovskii\thanks{School of Computer Science, The University of Sheffield, Sheffield S1 4DP, UK}}
\date{}

\begin{document}

\maketitle

\input{abstract}

\section{Introduction} \label{sec:intro}

For a sequence of real-valued random variables $\xi_n$, we say it \emph{concentrates around its expectation $\mathbb{E}\xi_n$} or simply \emph{concentrates}
if 
 $\xi_n/\mathbb{E} \xi_n$ converges in probability to $1$, as $n\to\infty$. 
This paper concerns the most common {\erdosrenyi} (or binomial) model of random graphs, denoted by $G(n,p)$, where each edge between a pair of vertices from an $n$-vertex set is present with probability $p=p(n)$, independently of the others. A function $\tau$ that maps graphs to reals, evaluated on $G(n,p)$, induces a sequence  of random variables, and  we say that $\tau$ concentrates if the random variables do. Many natural graph functions, such as counts of subgraphs, are known to concentrate in $G(n,p)$ for a broad range of $p=p(n)$: for example, the number of triangles concentrates for all $p$ such that $pn\to\infty$ as $n\to\infty$~\cite{smallsubgraphs} and the maximum degree concentrates for all $p$ such that $pn/\ln n\to\infty$ as $n\to\infty$~\cite{Bollobas_degrees,degreesequences,ivchenko}.\\ 

The concentration phemonemon was extensively studied for diverse graph characteristics and for various ranges of $p$. In particular, for $p$ above certain threshold, 
\begin{itemize}
\item for every graph $H$, the number of subgraphs $X_H$ isomorphic to $H$ in $G(n,p)$ concentrates~\cite{smallsubgraphs},
\item for every graph $H$ with a distinguished set of roots, the maximum number of ``$H$-extensions'' of a set of roots in $G(n,p)$ concentrates for $p$ above a certain threshold~\cite{Spencer_count} (for example, the maximum degree is the maximum number of $H$-extensions of a vertex, where $H$ has two vertices, a single edge, and one root).
\end{itemize}
There are many other natural directions around the concentration phenomenon that have been thoroughly investigated, such as tail bounds, integral and local limit theorems, see, e.g.~\cite{Araujo,Chatterjee,Samotij,Infamous,KimVu,smallsubgraphs,SS}.

In this paper, we introduce a broad logic-based approach to obtaining concentration results. We define a real-valued logic consisting of terms that represent real-valued graph functions. The language is closed under a wide class of real-valued
functions -- the real-valued analog of Boolean connectives. It is also closed under a summation aggregate as well as the minimum and maximum aggregates, the real-valued analog of existential and universal quantification. Our main result states that every term in this language concentrates when $p=\mathrm{const}$ or $p=n^{-\alpha}$, where $\alpha\in(0,1)$ is irrational. In addition, we show that, with certain restrictions on the connectives, the expectation of a term  is approximated by a polynomial and converges to a constant whenever the term is bounded. This result subsumes the previously mentioned concentration results for subgraph counts and maximum extensions. 
Therefore, our results can be seen as meta-theorems for concentration: they both imply a number of concentration results on random graphs in the literature, as well as serve for generating new concentration results. 

On the other hand, our results can be viewed within the long line of work on logic of random graphs. The bulk of these results deal with first-order (FO) logic and focus on convergence. 
 This line originates in works of Fagin \cite{faginzeroone} and Glebskii et al. \cite{glebskii}, which showed that for $p=\mathrm{const}$, the probability of any sentence in FO logic over graphs tends to either zero or one: that is, $G(n,p)$ obeys the \emph{ FO zero-one law}. 
 Later work established zero-one laws for other Boolean-valued logics for $p=\mathrm{const}$ \cite{kolaitisvardiinfty}, FO zero-one laws and convergence laws for $G(n,p)$ for various growth rates of $p=p(n)$
\cite{lynchlinearsparse,shelahspencersparse,spencerthoma}, for fragments of FO logic~\cite{McArthur,YZ,zhuk-dense,zhuk-sparse}, and for other random graph models 
\cite{comptonsurvey,haber,muller,kleinberg,attachment2,attachment,mccolm}. 
 The validity of the FO zero-one law for $G(n,p)$   depends on the value of $p$:
  \begin{itemize}
  \item it obeys the FO zero-one law when $p=n^{o(1)}$ (in particular, for constant $p$)~\cite{glebskii,faginzeroone,spencer-dense}; 
  \item for $p=n^{-\alpha}$, where $\alpha\in(0,1]$, it obeys the law if and only if $\alpha$ is irrational~\cite{shelahspencersparse,lynchlinearsparse}.
  \end{itemize}
  The case $\alpha>1$ is also well-studied in the context of FO limit laws~\cite{shelahspencersparse} and is not the focus of this paper: in this regime, with high probability the random graph is acyclic and, for every tree $T$, the asymptotic behaviour of the number of components isomorphic to $T$ is well-understood. For any rational $\alpha\in(0,1)$, there are FO sentences such that the probability of their validity does not converge (and therefore there is no hope to get a concentration result in our language as well). We stress that convergence of terms in real-valued logic has been also investigated~\cite{usneurips24,uslics,jaeger,koponen,koponen2024relative}. These results do not give any insight into concentration, for the detailed comparison of these results with our contribution see Section~\ref{sec:related}. 
  
  \paragraph{Organisation.} In Section~\ref{sec:prelims}, we introduce our language and recall the required graph-theoretic and probabilistic background. Section~\ref{sec:related} discusses related work and compares previous results with our contribution. Technical aspects of this comparison are disclosed later in Section~\ref{sc:comparison}. Classes of connective functions that preserve concentration and their properties are discussed in Section~\ref{sc:connectors}. These properties are further exploited in Sections~\ref{sec:dense-all}~and~\ref{sec:sparse-all}, where we state and prove our main results --- concentration of closed terms in dense and sparse random graphs, respectively. In Section~\ref{subsc:convergence-law-sparse}, we state another result that extends the convergence law from~\cite{uslics} to sparse $G(n,n^{-\alpha})$ with irrational $\alpha$. Section~\ref{sec:discussion} proposes related research directions and discusses some remaining open questions.

\paragraph{Notation and conventions.}
Given a positive integer $n$ we write $[n]$ for the set $\{1,\dots, n\}$. For positive integers $n$ and $k\leq n$ we denote $(n)_k:=\frac{n!}{(n-k)!}$.
For a set $X$ and a positive integer $k$, we write $(X)_k$ for the set of $k$-tuples of distinct elements from $X$. Therefore, if $X$ is finite, then $(|X|)_k$ is the cardinality of $(X)_k$. Over-lined variables,
such as $\overline{v}$, represent finite tuples of arbitrary length $|\overline{v}|$. Sometimes we  overload $\overline{v}$ to mean either the tuple or the underline {\it set} of elements that constitute the tuple.

\section{Preliminaries} \label{sec:prelims}

\paragraph{Graphs.} 
A graph $G$ consists of a finite set of vertices $V(G)$ and a set of undirected edges $E(G) \subseteq \binom{V(G)}{2}$ with no loops.
 For any vertex $u$, its neighborhood is $\Ne(u) \coloneqq \{u \in V(G) \mid \{u, v\} \in E(G)\}$.

\begin{definition}[Multi-rooted graphs $F{[\overline{x}]}$]
A \emph{multi-rooted graph}
(MRG) is a graph $F$ along with a tuple of distinguished vertices $\overline{x}=(x_1,\ldots,x_k)\in(V(F))_k$ called \emph{roots}. We let
$F[\overline{x}]$ denote such an MRG. We say that MRGs $F[\overline{x}]$ and $G[\overline{u}]$ are {\it isomorphic}, if $|\overline{x}|=|\overline{u}|=k$ for some positive integer $k$, and there exists an isomorphism $f:V(F)\to V(G)$ of (unrooted) graphs $F$ and $G$ such that, for every $i\in[k]$, $f(x_i)=u_i$.
\end{definition}

 Note that in this paper, all graphs, hence all MRGs, are finite.
The usual notions from graph theory are extended to MRGs in the natural way.

\begin{definition}[Induced subgraphs] \label{def:namingrestricting}
Given a graph $G$ and a set of its vertices $U\subset V(G)$, we let
$G \upharpoonright U$ denote the {\it induced subgraph} of $G$ on the set $U$, that is $G \upharpoonright U$ has vertex set $U$ and set of all edges of $G$ that have both endpoints in $U$.
\end{definition}

\paragraph{Probability theory background.}
In all the probabilistic statements and arguments, we always assume that there is some underline probability space that we do not define explicitly. When a random varible $X$ has distribution $Q$, we write $X\sim Q$.
\begin{definition}[With high probability] \label{def:aasconvergence}
We say that a sequence of events $(A_n)_{n\in \NN}$ holds \emph{with high probability (abbreviated to $\whp$)} if $\lim_{n\to \infty} \Pr(A_n)=1$. 
\end{definition}

\begin{definition}[Convergence] 
Let $(X_n)_{n \in \nats}$ be a sequence of random variables and $X$ be a single random variable. We say 
\begin{compactitem}
\item $(X_n)_{n \in \nats}$ \emph{converges in probability} to $X$ and write $X_n\stackrel{P}\to X$ if, for every $\varepsilon>0$, $\whp$ $|X_n-X|\leq\varepsilon$;
\item $(X_n)_{n \in \nats}$ \emph{converges in distribution} to $X$ and write $X_n\stackrel{d}\to X$ if the distribution of $X_n$ converges to the distribution of $X$ weakly, i.e., for every continuous bounded real-valued $f$, we have that $\mathbb{E}f(X_n)\to\mathbb{E}f(X)$;
\item $(X_n)_{n \in \nats}$ \emph{converges in $L_p$} to $X$ for some $p>0$ and write $X_n\stackrel{L_p}\to X$ if $\lim_{n\to\infty}\mathbb{E}|X_n-X|^p=0$.
\end{compactitem}
\end{definition}

We recall that that the convergence in $L_p$ is stronger than the convergence in probability which is, in turn, stronger than the convergence in distribution.

\begin{definition}[Concentration around the mean] \label{def:concentrates} We say that a sequence $(X_n)_{n \in \nats}$ of random variables \emph{concentrates around its mean} if, for every $p>0$, $X_n/\mathbb{E}X_n\stackrel{L_p}\to 1$ as $n\to\infty$.
\end{definition}

\paragraph{{\erdosrenyi} random graphs.}
Let $p=p(n)\in(0,1)$ be a function of $n\in\mathbb{Z}_{>0}$. The \erdosrenyi\ random graph $G(n,p)$ is a distrition on the set of all graphs on the set of vertices $[n]$ defined as follows: each edge between distinct $i, j \in [n]$ appears with probability $p(n)$, independently of the others. That is, if $\mathbf{G}_n\sim G(n,p)$, then for every graph $G$ on $[n]$, we have that $\mathbb{P}(\mathbf{G}_n=G)=p^{|E(G)|}(1-p)^{{n\choose 2}-|E(G)|}$. We single out a few cases of the growth rate of $p$ for {\erdosrenyi} random graphs, that are of particular interest in the context of logical limit laws:
\begin{compactitem}
\item \emph{dense}: $p$ is a constant;
\item \emph{irrational sparse}: (or just ``sparse'' , since it is the only sparse case we deal with) $p= n^{-\alpha}$, where $0<\alpha<1$ is irrational.
 \end{compactitem}

\paragraph{A real-valued logic with aggregate operators.}
 We present the real-valued logics we will analyze.
 Let $\functionclass$ be a collection of functions from $\reals^m$ into $\reals$, where $m$ is a positive integer. Following the literature on continuous logic, we  will refer to these as \emph{connective functions}, since they are analogous to connectives in classical logic.
 
\begin{definition}[Aggregate real-valued logic] The term language
    $\Agglang_{\functionclass}$ contains node variables $u, v, w, \ldots$ and terms defined inductively as follows.
    \begin{compactitem}
        \item The \emph{basic terms}, or \emph{atomic terms}, are the 
        constants $c$, the characteristic function of the edge relation $\mathrm E(u, v)$, and equality of vertices $u=v$.
        \item Given a term $\tau(\overline{u}, v)$ with free variables $\overline{u},v$, the \emph{global average} for the variable $v$ is
        $\Mean_v \tau(\overline{u},v)$ with free variables $\overline{u}$; and given a term $\tau(w,\overline{u}, v)$ with free variables $w,\overline{u},v$, the \emph{local average} for variables $w,v$ is $\LMean_{wEv} \tau(w,\overline{u},v)$ with free variables $w,\overline{u}$.
        \item Given 
        $\tau(\overline{u},v)$ with free variables $\overline{u},v$, the \emph{maximum} for variable $v$ is $\supl_v \tau(\overline{u},v)$ with free variables $\overline{u}$.
        \item Given 
        $\tau(\overline{u},v)$ with free variables $\overline{u},v$, the \emph{minimum} for variable $v$ is $\infl_v \tau(\overline{u},v)$ with free variables $\overline{u}$.
        \item Given 
        $\tau(\overline{u},v)$ with free variables $\overline{u},v$, the \emph{summation} for variable $u$ is $\Sigma_v \tau(\overline{u},v)$ with free variables $\overline{u}$.
        \item Terms are closed under applying a  function symbol for each  $f  \in \functionclass$.
    \end{compactitem}
\end{definition}

We define sublanguages, like $\Aggo{\functionclass, \LMean,\Mean, \supl}$, by omitting one or more of the aggregates.

\begin{definition}[Interpretation of terms]
    Let $\tau$ be a term with $k$ free variables, let $G$ be a graph, and let $\overline{u}=(u_1,\ldots,u_k)\in (V(G))^k$. Let $\GG = G[\overline{u}]$. For $v\in V(G)$, we let $\GG[v] = G[(u_1,\ldots,u_k,v)]$. The \emph{interpretation} $[\![\tau]\!]_{\GG}$ of a term $\tau$ in $\GG$ is defined recursively as follows:
    \begin{compactitem}
        \item $[\![c]\!]_{\GG} = c$ for any constant $c\in\mathbb{R}_{\geq 0}$.
        \item $[\![\mathrm E(u_{i}, u_{j})]\!]_{\GG}$ is $1$ when $\{u_i, u_j\} \in E(G)$ and $0$ otherwise, and similarly for equality.
        \item $[\![f(\tau_1, \ldots, \tau_m)]\!]_{\GG}  = f([\![\tau_1]\!]_{\GG}, \ldots, [\![\tau_m]\!]_{\GG})$ for $f \in \functionclass$.
        \item Define $[\![\Mean_v \tau]\!]_{\GG}$ as $
        \frac 1 {\abs{V(G)}}
                \sum_{v \in V(G)} [\![\tau]\!]_{\GG[v]}$.
     
        \item Define $[\![ \LMean_{u_1 E v} \tau]\!]_{\GG}$ as $
        \frac 1 {\abs{\Ne(u_1)}}
                \sum_{v \in \Ne(u_1) } [\![\tau]\!]_{\GG[v]}$,
        if the denominator is nonzero, and zero otherwise.
       
        \item $[\![\supl_v \tau]\!]_{\GG} = \max_{v \in V(G)} [\![\tau]\!]_{\GG[v]}$, and same for $\infl$.
        \item $[\![\Sigma_v \tau]\!]_{\GG} = \Sigma_{v \in V(G)} [\![\tau]\!]_{\GG[v]}$.
    \end{compactitem}
\end{definition}

A term is \emph{closed} if it has no free variables.

Throughout the paper, we sample $\mathbf{G}_n\sim G(n,p)$, fix a $k$-tuple $(u_1,\ldots,u_k)\in[n]^k$, and consider the random variable  $[\![\tau]\!]_{\mathbf{G}_n[\overline{u}]}$. Note that \emph{we will often overload $\tau$ to mean either the term or the random variable. When we say some property holds ``almost surely'' for $\tau$ (or simply ``a.s.''), 
we will mean that it holds with probability one as a random variable. For the probability spaces we consider, this will follow from the fact that the property holds always for $\tau$ as a function of a graph and a tuple.} In particular, we say that $\tau$ converges in probability/in distribution/in $L_p$ if the corresponding sequence of random variables $([\![\tau]\!]_{\mathbf{G}_n[\overline{u}]})_{n\in\mathbb{Z}_{>0}}$ converges in the same sense. Similarly we say that $\tau$ \emph{concentrates around the mean} if the corresponding sequence of random variables concentrates in the sense of Definition \ref{def:concentrates}.

\paragraph{Examples of the term language.}
Our language is quite expressive. 
\begin{compactitem}
\item All of the function classes considered in ths paper will allow us to simulate binary Boolean connectives: that is, for every $f_0:\in\{0,1\}^2\to\{0,1\}$, there exists $f\in\mathcal{F}$ such that, for all $x\in\{0,1\}^2$, $f(x)=f_0(x)$. Then, for any formula $\phi$ in FO logic, there is a term $\tau_\phi\in\Agglang_{\mathcal{F}}$ that returns $1$ when $\phi$ holds and $0$ otherwise.
We form $\tau_\phi$ inductively, applying maximum to simulate existential quantifications, and using connective functions to simulate  Boolean connectives.

\item For any FO formula $\phi$, there is a term $\tau^{\%}_{\phi}$ in the sublanguage $\Aggo{\mathcal{F},\Mean,\supl}$  that returns the {\it percentage} of tuples of nodes
satisfying $\phi$. If $\phi$ has a single variable, we can  write a term $\tau^\%_{ \phi}(v)\in\Aggo{\mathcal{F},\LMean,\supl}$ that returns
the percentage of $v$'s neighbors that satisfy~$\phi$.
\item For any FO formula $\phi$, there is a term $\tau^{\mathrm{count}}_{\phi}$ in the sublanguage $\Aggo{\mathcal{F},\Sigma,\supl}$  that returns the {\it number} of tuples of nodes
satisfying $\phi$. For example, there is a term that counts the number of triangles.
\end{compactitem}

In Section~\ref{sc:connectors}, we will define classes of functions $\functionclassrellip$ and $\functionclasspoly$, and show that $\Agglang_{\mathcal{F}}=\Aggo{\mathcal{F},\Sigma,\supl}$ for both $\mathcal{F}=\functionclassrellip$ and $\mathcal{F}=\functionclasspoly$.

\section{Related work} \label{sec:related}

In the dense case of $p=\mathrm{const}$, \cite{faginzeroone,glebskii} showed that the probability of each FO sentence approaches either $0$ or $1$ as $n\to\infty$. For terms in $\Aggo{\Lipsch,\Mean,\LMean}$, where $\Lipsch$ is the set of Lipschitz functions  \cite{usneurips24} showed that every term converges in probability to a constant. \cite{uslics} extended this to include $\supl$.  Our concentration results imply these convergence results for bounded terms in the language $\mathrm{Agg}_{\functionclasspoly}$ with summation, although for a different class of connectives, and if it is 0-1 valued it implies a zero or one limit (see details in Section~\ref{sc:comparison}) --- in particular, our result subsumes the FO 0-1 law. Note that these convergence results cannot be extended to bounded terms in the entire $\mathrm{Agg}_{\Lipsch}$: for example, $\sin x\in\Lipsch$ and so $\sin|E(G)|$ is expressible in $\mathrm{Agg}_{\Lipsch}$, while $\sin|E(\mathbf{G}_n)|$ does not converge for $\mathbf{G}_n\sim G(n,p)$.  This happens for functions $f$ such that there exists a sequence of random variables $\xi_n$ that is concentrated around the mean, while $f(\xi_n)$ is not concentrated. As we show in this paper, this is the only obstacle for a language with summation to have concentrated terms.

For sparse random graphs $\mathbf{G}_n\sim G(n,n^{-\alpha})$, where $0<\alpha<1$,
\cite{shelahspencersparse} showed the FO 0-1 law if $\alpha$ is irrational. For rational $\alpha\in(0,1)$, they proved that there exists a FO sentence $\phi$ such that $\mathbb{P}(\mathbf{G}_n\models\phi)$ does not converge. It follows that we \emph{cannot have} convergence for any sublanguage of $\mathrm{Agg}_{\mathcal{F}}$ that includes $\supl$ when $\mathcal{F}$ contains all boolean connectives, in the case of rational $\alpha\in(0,1)$. In contrast, for the most restrictive language $\Aggo{\Lipsch,\Mean,\LMean}$ and for {\it all} $\alpha\in(0,1)$, \cite{usneurips24} showed convergence in probability for every term. We extend the results from \cite{usneurips24,uslics} by showing that, for irrational $\alpha\in(0,1)$, every term in the language $\Aggo{\Lipsch,\Mean,\LMean,\supl}$ converges in probability to a constant. We further  prove that every term in $\mathrm{Agg}_{\functionclassrellip}$ is concentrated and every bounded term in $\mathrm{Agg}_{\functionclasspoly}$ converges in probability to a constant. This result clearly implies the 0-1 law from~\cite{shelahspencersparse}. See a detailed comparison of these results in Section~\ref{sc:comparison}.

Convergence laws in the context of real-valued logics were also studied in~\cite{jaeger,koponen,koponen2024relative}.  The main theorem of~\cite{koponen} is an almost-sure aggregation elimination result  for a term language called PLA, which implies the convergence law for a wide family of distributions over graphs, including dense $G(n,p)$, generalising results from~\cite{jaeger}. In~\cite{koponen2024relative}, another aggregation elimination result for a wider family of probability distributions, which also subsumes sparse $G(n,p)$, was presented. However, the logic involved does not extend FO logic.  Crucially, terms in the languages considered in these papers take values in $[0,1]$ and aggregation functions include $\Mean$ and $\LMean$ but not $\Sigma$. 
 A general approach to asymptotic elimination of aggregation functions of $[0,1]$-valued terms, for arbitrary \emph{bounded} aggregation and connective functions, was presented in~\cite{koponen-general}.

Our results imply concentration of subgraph counts and extension counts, that were previously established in~\cite{smallsubgraphs,Spencer_count}. Nevertheless, we note that our methods can be used to establish fairly optimal bounds on the second order term in asymptotical expansion of these statistics --- for arbitrary extensions such results were not known, although for some specific MRGs optimal results were establised in~\cite{Zhuk-ext1,Zhuk-ext2,Zhuk-ext3,Warnke}. For example, an interesting and important case of extension counts is the maximum degree $\Delta_n$ which satisfies $\Delta_n=np+\sqrt{2np(1-p)\ln n}(1-o(1))$ $\whp$ when $p=\mathrm{const}$~\cite{Bollobas_degrees,degreesequences,ivchenko}, while our methods show the existence of a constant $C>0$ such that $\whp$ $|\Delta_n-np|\leq C\sqrt{n\ln n}$. For the number of subgraphs $X_H$ isomorphic to a given fixed graph $H$, the second-order asyptmotical term is well known: for $p$ above a certain threshold, $X_H=\mathbb{E}X_H+\eta_n\sqrt{\mathrm{Var}X_H}$, where $\eta_n$ converges in distribution to a standard normal random variable~\cite{smallsubgraphs}. We emphasise that the strength of our result lies in its generality rather than its optimality: rather than optimising asymptotic bounds for specific statistics, we develop a general framework that yields new concentration results and offers a new logical perspective on concentration phenomena in probability theory.

\section{Connective functions}
\label{sc:connectors}

In what follows, we define a class of real-valued functions $f$ that preserve the concentration property of a sequence of random variables, i.e. if sequences $\tau^{(i)}_n$ are concentrated, then $f(\tau_n)$, where $\tau_n=(\tau^{(1)}_n,\ldots,\tau^{(m)}_n)$, is concetrated as well. Clearly, $f$ preserves the concentration property for these sequences, if
$$
 \frac{|f(\tau)-\mathbb{E} f(\tau)|}{\mathbb{E}f(\tau)}=O\left(\frac{\|\tau-\mathbb{E}\tau\|}{\|\mathbb{E}\tau\|}\right).
$$
This motivates the following definition of relative Lipschitz functions.  

\begin{definition}[Relative Lipschitz functions]
We call  $f:\mathbb{R}^m_{>0}\to\mathbb{R}_{>0}$ {\it relative Lipschitz}, if there exists $C>0$ such that,
\begin{equation}
\text{
$\frac{|f(x)-f(y)|}{f(x)+f(y)}\leq C\cdot\frac{\|x-y\|}{\|x+y\|}\quad$ for all $x,y\in\mathbb{R}^m_{>0}$.
}
\label{eq:asymp_Lip}
\end{equation}

As we will use these functions in a context where arguments could  take value 0, we have to extend the definition to all non-negative $x$. For $f:\mathbb{R}^m_{\geq 0}\to\mathbb{R}_{\geq 0}$ and $I=\{j_1,\ldots,j_{m'}\}\subseteq[m]$ of size $m'\in[m]$, we define the {\it $I$-specialisation of $f$} as a function $f_I:\mathbb{R}^{m'}_{>0}\to\mathbb{R}_{\geq 0}$ satisfying the following requirement. 
For every $y=(y_1,\ldots,y_{m'})\in\mathbb{R}_{>0}^{m'}$ and its `extension' $x=(x_1,\ldots,x_m)$, where
$x_{j_i}=y_i>0$ for $i\in[m']\text{ and }x_j=0\text{ for }j\notin I$,
$$
f_I(y)=f(x).
$$ 
A function $f:\mathbb{R}^m_{\geq 0}\to\mathbb{R}_{\geq 0}$ is {\it relative Lipschitz} if every specialisation is either positive-valued and relative Lipschitz, or identically equal to zero.
\end{definition}

\begin{remark}
\label{rk:Hillbert}
For the 1-dimensional case $f:\mathbb{R}_{>0}\to\mathbb{R}_{>0}$, the class of relative Lipschitz functions coincides with the class of Lipschitz functions on $(\mathbb{R}_{>0},\rho)$, where the metric $\rho$ is a subadditive transform of the Hilbert metric $d(x,y)=|\ln(x/y)|$. Indeed, $|x-y|/(x+y)=g(d(x,y))$, where $g(t)=\frac{e^t-1}{e^t+1}$ is a subadditive function. Therefore, $\rho$ is indeed a metric. However, when the dimension $m>1$, the $\|x-y\|/\|x+y\|$ is not a metric.
\end{remark}

Let $\functionclassrellip$ 
be the family of relative Lipschitz functions $f:\mathbb{R}^m_{\geq 0}\to\mathbb{R}_{\geq 0}$. In Section~\ref{sc:dense-proofs}, we show that, for the set of connective functions $\functionclassrellip$, every term $\tau$ in the respective language evaluated on $G(n,p)$, is conentrated around its average $\mathbb{E}\tau$. We will rely on the following theorem that shows that $f$ preserves concentration properties of random variables.

\begin{theorem}[Functions preserve concentration]
\label{cl:function-concentration}
Let $f:\mathbb{R}^m_{\geq 0}\to\mathbb{R}_{\geq 0}\in\functionclassrellip$. There exists constants $a_f,b_f>1$ such that the following holds. Let $\xi_1,\ldots,\xi_m\geq 0$ be random variables, $\varphi_1,\ldots,\varphi_m\geq 0$, $\delta,\delta'>0$, and $K>2$ be constants such that, for every $i\in[m]$, 
\begin{compactitem}
\item $\xi_i\in\{0\}\cup[K^{-1},K]$ a.s.,

\item either $\varphi_i=0$ and $\mathbb{P}(\xi_i=0)\geq 1-\delta$, 

or $\varphi_i\geq K^{-1}$ and $\mathbb{P}(|\xi_i-\varphi_i|>\delta'\cdot\varphi_i)\leq \delta$.
\end{compactitem}
Let $\xi=f(\xi_1,\ldots,\xi_m)$ and $\varphi=f(\varphi_1,\ldots,\varphi_m)$. Then
\begin{compactitem}
\item $\xi\in\{0\}\cup[K^{-b_f},K^{b_f}]$ a.s.,
\item either $\varphi=0$ and $\mathbb{P}(\xi=0)\geq 1-m\delta$, 

or $\varphi\geq K^{-b_f}$ and $\mathbb{P}(|\xi-\varphi|>a_f\delta'\cdot\varphi)\leq m\delta$.
\end{compactitem}
Moreover $\varphi=0$ if and only if $f_I\equiv 0$ where $I$ is the set of all $i\in[m]$ such that $\varphi_i\neq 0.$
\end{theorem}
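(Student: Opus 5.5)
The plan is to reduce everything to a single specialisation $f_I$ and then use the relative Lipschitz inequality \eqref{eq:asymp_Lip} in two ways: iterated along a multiplicative chain to get polynomial bounds in $K$, and once directly to get the relative error.

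\textbf{Step 1 (range bound).} Fix a nonempty $I\subseteq[m]$ with $f_I\not\equiv 0$, so $f_I$ is positive and relative Lipschitz with constant $C_I$. Let $y\in[K^{-1},K]^{|I|}$ and $\mathbf 1=(1,\ldots,1)$.
\begin{itemize}
\item Join $\mathbf 1$ to $y$ by a chain $z^{(0)}=\mathbf 1,\ldots,z^{(N)}=y$. At each step every coordinate is multiplied by a factor in $[(1+\epsilon)^{-1},1+\epsilon]$.
\item For consecutive points, $\|z^{(j)}-z^{(j+1)}\|/\|z^{(j)}+z^{(j+1)}\|\le\epsilon$ up to an absolute constant. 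Choose $\epsilon=\epsilon(C_I)$ so that the right side of \eqref{eq:asymp_Lip} is at most $1/2$.
\item Then $|f_I(z^{(j)})-f_I(z^{(j+1)})|\le\frac12\bigl(f_I(z^{(j)})+f_I(z^{(j+1)})\bigr)$, so consecutive values differ by a factor at most $3$.
\item With $N=O(\ln K/\ln(1+\epsilon))$ steps, this gives $f_I(y)\in[f_I(\mathbf 1)3^{-N},f_I(\mathbf 1)3^{N}]$.
\item Since $K>2$, the constant $f_I(\mathbf 1)$ lies between $K^{-c}$ and $K^{c}$ for some $c=c(f)$.
\end{itemize}
Hence $f_I(y)\in[K^{-b_I},K^{b_I}]$. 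Take $b_f=\max_I b_I$ over the finitely many $I$. For $I=\emptyset$, $f(0)$ is a fixed constant, handled the same way by enlarging $b_f$. Because $\xi$ always equals $f_J$ evaluated at a point of $[K^{-1},K]^{|J|}$, where $J$ is its random support, this proves the first bullet.

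\textbf{Step 2 (good event).} Let $I=\{i:\varphi_i\ne0\}$. Let $\mathcal E$ be the event that $\xi_i=0$ for $i\notin I$ and $|\xi_i-\varphi_i|\le\delta'\varphi_i$ for $i\in I$. By the union bound, $\mathbb P(\mathcal E)\ge1-m\delta$.

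Assume $\delta'<1$; the case $\delta'\ge1$ is discussed in Step 3. On $\mathcal E$ the support of $\xi$ is exactly $I$, so $\xi=f_I(\xi_I)$ and $\varphi=f_I(\varphi_I)$. This also yields the ``moreover'' clause: $\varphi=0$ iff $f_I\equiv0$, by the definition of relative Lipschitz, since $f_I$ is either positive or identically zero. In that case $\xi=0$ on $\mathcal E$, as required.

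Otherwise, because all entries are nonnegative,
$$\|\xi_I-\varphi_I\|\le\delta'\|\varphi_I\|\le\delta'\|\xi_I+\varphi_I\|.$$
Then \eqref{eq:asymp_Lip} gives $|\xi-\varphi|\le C_I\delta'(\xi+\varphi)$. If $C_I\delta'\le1/2$, rearranging gives $\xi\le3\varphi$, and hence $|\xi-\varphi|\le4C_I\delta'\varphi$. The bound $\varphi\ge K^{-b_f}$ comes from Step 1.

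\textbf{Step 3 (main obstacle: large $\delta'$).} The delicate case is $\delta'$ not small, namely $C_I\delta'>1/2$, where the rearrangement above fails.
\begin{itemize}
\item For $1/(2C_I)<\delta'<1$: every coordinate of $\xi_I$ is within a factor $(1-\delta')^{-1}$ of the corresponding coordinate of $\varphi_I$. I would rerun the chain argument of Step 1 between $\varphi_I$ and $\xi_I$ to bound $\xi/\varphi$, and choose $a_f$ large (depending on $C_I$) so that $a_f\delta'\ge a_f/(2C_I)$ dominates the resulting constant.
\item For $\delta'\ge1$: the event $\mathcal E$ no longer forces $\xi_i>0$ for $i\in I$, so the support of $\xi$ can shrink and $\xi$ may be compared with a different specialisation. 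The chain argument also only gives a bound polynomial in $(1+\delta')$, which need not be linear in $\delta'$.
\end{itemize}
Checking that a $K$-independent $a_f$ suffices for $\delta'\ge1$ is the point I expect to need care. If it fails, I would restrict to $\delta'<1$, or to $\delta'\le\delta_0(f)$, which is the regime in which the theorem is applied to terms.
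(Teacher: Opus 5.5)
Steps 1 and 2 are sound, apart from a point about $\varphi$ raised at the end, and they follow the paper's proof. For the range bound, the paper applies Claim~\ref{cl:poly-bound} to $f_I$, and to $1/f_I$ via Claim~\ref{cl:all-powers} with exponent $-1$. Your log-scale chain from $\mathbf{1}$ to $y$ gives both bounds at once and is more self-contained. Step 2 is the paper's argument, and your estimate $\|\xi_I-\varphi_I\|\le\delta'\|\xi_I+\varphi_I\|$ even avoids the paper's $\sqrt{m}$ loss. You are also right that the final rearrangement needs $C_I\delta'\le 1/2$. The paper tacitly uses the same smallness when it passes from $|\xi-\varphi|\le\delta' C\sqrt{m'}\,(\xi+\varphi)$ to $|\xi-\varphi|\le 2\delta' C\sqrt{m}\,\varphi$, and its good-event inclusion tacitly uses $\delta'<1$.

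\textbf{The step that fails is your plan for $1/(2C_I)<\delta'<1$.} Rerunning the chain between $\varphi_I$ and $\xi_I$ takes about $\ln\frac{1}{1-\delta'}/\ln(1+\epsilon)$ steps. So it only bounds $\xi/\varphi$ by $(1-\delta')^{-c}$, which blows up as $\delta'\to 1$, and no fixed $a_f$ dominates it. This is not a weakness of the method: the statement is false there.
\begin{itemize}
\item Take $f(x)=1/x$ for $x>0$ and $f(0)=0$. This $f$ is in $\functionclassrellip$, and the paper itself uses it in Section~\ref{sc:comparison}. Let $m=1$, $\varphi_1=1$, $\xi_1\equiv 1-\delta'$, $\delta=1/2$, and $K>\max\{2,(1-\delta')^{-1}\}$. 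All hypotheses hold, yet $|\xi-\varphi|=\frac{\delta'}{1-\delta'}\,\varphi>a_f\delta'\varphi$ as soon as $\delta'>1-1/a_f$.
\item For $\delta'\ge 1$ even the zero alternative fails, through exactly the support-shrinking mechanism you describe. Take the $f$ of Remark~\ref{rk:no-inf} ($f(0)=1$, $f(x)=0$ for $x>0$), with $\varphi_1=1$ and $\xi_1\equiv 0$. Then $\varphi=0$ but $\xi=1$.
\end{itemize}
So your fallback is not optional: the theorem must be read with $\delta'\le\delta_0(f)$, e.g.\ $\delta_0=\min_I 1/(2C_I)$. This costs nothing downstream. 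In Lemmas~\ref{lem:dense_main} and~\ref{lem:sparse_main} one has $\delta'=\lambda\ln n/\sqrt{n}$ or $\delta'=n^{-\varepsilon}$, both tending to $0$.

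\textbf{A smaller gap, shared by the paper.} The paper writes ``the same applies to $\varphi$''; you write that $\varphi\ge K^{-b_f}$ ``comes from Step 1''. But Step 1 bounds $f_I$ only on $[K^{-1},K]^{|I|}$, while the hypothesis gives $\varphi_i\ge K^{-1}$ with no upper bound. You need $\delta<1$.
\begin{itemize}
\item With $\delta<1$, some outcome has both $\xi_i\le K$ and $\xi_i\ge(1-\delta')\varphi_i$. For $\delta'\le 1/2$ this gives $\varphi_i\le 2K\le K^2$.
\item Step 1 with $K^2$ in place of $K$ then gives $\varphi\ge K^{-2b_I}$.
\item For $\delta\ge 1$ the hypothesis on $\varphi_i$ is vacuous, and the claim genuinely fails. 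Take $f(x)=1/x$ and $\varphi_1=K^{N}$ with $N$ large; then $\varphi=K^{-N}<K^{-b_f}$.
\end{itemize}
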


\begin{proof}
By Claim~\ref{cl:poly-bound}, there exists $b_f>1$ that only depends on $f$, such that $\xi<K^{b_f}$ a.s., as needed. Moreover, whenever $f_I\not\equiv 0$, we have that $1/f_I\in\functionclassrellip$. Therefore, by Claim~\ref{cl:poly-bound}, whenever $\xi\neq 0$, we have that $1/\xi<K^{b_f}$, and the same applies to $\varphi$.

Let  $I=\{i_1,\ldots,i_{m'}\}\subset[m]$ be the set of  $i\in[m]$ such that $\varphi_i\neq 0$. In particular, for $i\notin I$, $\mathbb{P}(\xi_i=\varphi_i=0)\geq 1-\delta$, while, for $i\in I$, $\varphi_i\geq 1/K$ and $\mathbb{P}(|\xi_i-\varphi_i|>\delta'\cdot\varphi_i)\leq \delta$.

If $I=\varnothing$, we get $\varphi=f(0,\ldots,0)$ and $\tau=\varphi$ with probability at least $1-m\delta$, completing the proof (for any choice of $a_f>0$).

Now, let $I\neq\varnothing$ and $1\leq |I|=:m'\leq m$. If $\varphi=0$, then $f_I\equiv 0$, meaning that $\mathbb{P}(\xi=0)\geq 1-m\delta$.

Assume $\varphi\neq 0$ (therefore, $\varphi\geq K^{-b_f}$). It remains to show $\mathbb{P}(|\xi-\varphi|>a_f\delta'\cdot\varphi)\leq m\delta$.

Denote $\xi_I:=(\xi_{i_1},\ldots,\xi_{i_{m'}})$ and $\varphi_I:=(\varphi_{i_1},\ldots,\varphi_{i_{m'}})$. Let $C$ be the constant witnessing the relative Lipschitz condition~\eqref{eq:asymp_Lip} for $f_I$. Subject to the event 
$$
\{|\xi_i-\varphi_i|>\delta'\cdot\varphi_i\text{ for all }i\in I\}\subseteq
\{\xi_i\neq 0\text{ for all }i\in I\},
$$ 
that happens with probability at least $1-\delta m'$, we have
$$
 |f_I(\xi_I)-f_I(\varphi_I)|\leq|f_I(\xi_I)+f_I(\varphi_I)|\cdot C\cdot\frac{\|\xi_I-\varphi_I\|}{\|\xi_I+\varphi_I\|}.
$$
We also have that $\varphi=f_I(\varphi_I)$ and that $\xi=f_I(\xi_I)$ with probability at least $1-(m-m')\delta$. Therefore, with probability at least $1-(m-m')\delta-m'\delta= 1-m\delta$,
\begin{align*}
 |\xi-\varphi|\leq|\xi+\varphi|\cdot C\cdot \frac{\|\xi_I-\varphi_I\|}{\|\xi_I+\varphi_I\|}
 &\leq|\xi+\varphi|\cdot C\cdot\sqrt{\sum_{i\in I}(1-\xi_i/\varphi_i)^2}\\
 &\leq|\xi+\varphi|\cdot C\cdot\sqrt{\sum_{i\in I}(\delta')^2}
 =\delta' C\sqrt{m'}
\cdot|\xi+\varphi|,
\end{align*}
implying $|\xi-\varphi|\leq 2\delta' C\sqrt{m}\cdot \varphi$. Since the factor $2C\sqrt{m}$ only depends on $f$, we may take $a_f\geq 2C\sqrt{m}$, completing the proof.
\end{proof}

\paragraph{Properties of relative Lipschitz functions.}

We first note that, as in the case of Lipschitz functions, the notion of relative Lipschitz is stronger than continuity, when a function is defined on a positive domain.

\begin{claim} \label{cl:continuous}
If $f:\mathbb{R}^m_{>0}\to\mathbb{R}_{>0}$ is relative Lipschitz, then $f$ is continuous.
\end{claim}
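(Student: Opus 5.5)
We prove continuity at an arbitrary point $x\in\mathbb{R}^m_{>0}$ directly from the defining inequality~\eqref{eq:asymp_Lip}. Let $C$ be the constant from~\eqref{eq:asymp_Lip}, and take $y\in\mathbb{R}^m_{>0}$ with $y\to x$. Since all coordinates of $x$ and $y$ are positive, $x+y$ is a vector with positive coordinates, and $\|x+y\|\geq\|x\|>0$. Hence the right-hand side satisfies
$$
C\cdot\frac{\|x-y\|}{\|x+y\|}\leq C\cdot\frac{\|x-y\|}{\|x\|}\to 0 \quad\text{as } y\to x.
$$

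The only step needing care is converting the relative bound into an absolute one, because the denominator $f(x)+f(y)$ contains the unknown quantity $f(y)$. Write $t:=C\|x-y\|/\|x\|$. Fix a neighbourhood of $x$ small enough that $t\leq 1/2$ there. The inequality $|f(x)-f(y)|\leq t\,(f(x)+f(y))$ gives in particular
$$
f(y)-f(x)\leq t f(x)+t f(y),
$$
so $(1-t)f(y)\leq(1+t)f(x)$, and therefore $f(y)\leq 3f(x)$. This local boundedness of $f(y)$ is the key point. Substituting it back into the inequality yields
$$
|f(x)-f(y)|\leq 4t\,f(x)=\frac{4C\,f(x)}{\|x\|}\,\|x-y\|,
$$
which tends to $0$ as $y\to x$. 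So $f$ is continuous at $x$, and in fact locally Lipschitz in the usual sense.
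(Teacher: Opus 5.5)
Your argument is correct and follows the same two-step structure as the paper's proof: first show that $f$ stays bounded near $x$, then substitute that bound back into \eqref{eq:asymp_Lip}. The only difference is how the bound is obtained. You get $f(y)\le 3f(x)$ directly by rearranging the inequality, whereas the paper works along a sequence $x_n\to x$ and rules out a subsequence with $f(x_{n_i})\to\infty$ by contradiction. Your version therefore also gives the explicit estimate $|f(x)-f(y)|\le 4Cf(x)\|x-y\|/\|x\|$. Your ``locally Lipschitz'' remark is true, but it needs one more line: apply \eqref{eq:asymp_Lip} to pairs $y,z$ in the neighbourhood where $f\le 3f(x)$.
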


\begin{proof}
Consider a sequence $x_n\to x$ in $\mathbb{R}^m_{>0}$. If there is a subsequence $x_{n_i}$ such that $f(x_{n_i})\to\infty$, then $|f(x)-f(x_{n_i})|/(f(x)+f(x_{n_i}))\to 1$, contradicting~\eqref{eq:asymp_Lip}. Therefore $\sup_n f(x_n)\leq C_0$ for some $C_0>0$. We get 
$$
 |f(x)-f(x_n)|\leq\left(C\cdot\frac{f(x)+f(x_n)}{\|x+x_n\|}\right)\|x-x_n\|\leq
 \frac{2CC_0\|x-x_n\|}{\|x+x_n\|}
 \to 0,\quad n\to\infty.
$$
\end{proof}

We claim that the family of relative Lipschitz functions is closed under addition, multiplication, and raising into a power.

\begin{claim}  \label{cl:sum-dif}
Let $f,g:\mathbb{R}^m_{>0}\to\mathbb{R}_{>0}$ be relative Lipschitz. Then
\begin{itemize}
\item $f+g$ is relative Lipschitz;
\item if, for some constant $\varepsilon>0$, $f\geq(1+\varepsilon)g$, then $f-g$ is relative Lipschitz.
\end{itemize}

\end{claim}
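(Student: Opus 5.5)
Both parts follow from direct manipulation of the defining inequality~\eqref{eq:asymp_Lip}. Throughout, write $\Delta:=\|x-y\|/\|x+y\|$, let $C_f,C_g$ be the constants for $f,g$, and use the two inequalities
\[
|f(x)-f(y)|\le C_f\Delta\,(f(x)+f(y))
\quad\text{and}\quad
|g(x)-g(y)|\le C_g\Delta\,(g(x)+g(y)).
\]

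\emph{Sum.} Apply the triangle inequality:
\[
|(f+g)(x)-(f+g)(y)|\le |f(x)-f(y)|+|g(x)-g(y)|\le \max(C_f,C_g)\,\Delta\,\bigl((f+g)(x)+(f+g)(y)\bigr).
\]
Dividing by $(f+g)(x)+(f+g)(y)>0$ gives~\eqref{eq:asymp_Lip} with constant $\max(C_f,C_g)$. Positivity of $f+g$ is immediate.

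\emph{Difference.} Set $h:=f-g$.

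First, $h$ is positive. Since $f\ge(1+\varepsilon)g$, we get $h\ge\varepsilon g>0$. We also get $h\ge f-f/(1+\varepsilon)=\frac{\varepsilon}{1+\varepsilon}f$. Hence both $f$ and $g$ are controlled by $h$:
\[
f\le\frac{1+\varepsilon}{\varepsilon}\,h \quad\text{and}\quad g\le\frac{1}{\varepsilon}\,h.
\]

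Next, bound the numerator with the triangle inequality:
\[
|h(x)-h(y)|\le |f(x)-f(y)|+|g(x)-g(y)|\le \max(C_f,C_g)\,\Delta\,\bigl(f(x)+f(y)+g(x)+g(y)\bigr).
\]
By the bounds above, $f(x)+f(y)+g(x)+g(y)\le \frac{2+\varepsilon}{\varepsilon}\,(h(x)+h(y))$. This yields~\eqref{eq:asymp_Lip} for $h$ with constant $\max(C_f,C_g)(2+\varepsilon)/\varepsilon$.

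The only non-mechanical step is this last one: the hypothesis $f\ge(1+\varepsilon)g$ must be converted into the comparison $f+g\lesssim h$. This is exactly where the hypothesis is needed, since without it $h$ could be tiny while $f,g$ are large, and the ratio would blow up.
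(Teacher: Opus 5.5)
Your proof is correct and follows the paper's argument: the sum case uses the triangle inequality with constant $\max\{C_f,C_g\}$, and the difference case bounds $f+g$ by $(1+2/\varepsilon)(f-g)$ using $f-g\ge\varepsilon g$. This gives the same constant $(2+\varepsilon)/\varepsilon$ that you obtain.
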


\begin{proof}
 Let $x\neq y\in\mathbb{R}^m_{>0}$. We have that $f(x)>0$, $f(y)>0$, $g(x)>0$, and $g(y)>0$. Then $(f+g)(x)$ and $(f+g)(y)$ are also positive. Morefore, if $f\geq(1+\varepsilon)g$, then $(f-g)(x)\geq\varepsilon g(x)>0$ and $(f-g)(y)\geq\varepsilon g(y)>0$. Then
\begin{align*}
 \left|(f+g)(x)-(f+g)(y)\right|&=
 |f(x)+g(x)-f(y)-g(y)|\leq
 |f(x)-f(y)|+|g(x)-g(y)|\\
 &\leq \left(C_f\cdot\frac{f(x)+f(y)}{\|x+y\|}+C_g\cdot\frac{g(x)+g(y)}{\|x+y\|}\right)\|x-y\|\\
 &\leq
 \max\{C_f,C_g\}\cdot\frac{(f+g)(x)+(f+g)(y)}{\|x+y\|}\cdot\|x-y\|.
\end{align*}
In the same way, since $f-g\geq\varepsilon g$, letting $C=\max\{C_f,C_g\}$, we get 
\begin{align*}
 \left|(f-g)(x)-(f-g)(y)\right|&\leq
 C\cdot\frac{(f+g)(x)+(f+g)(y)}{\|x+y\|}\cdot\|x-y\|\\
 &\leq
 C\cdot\frac{f(x)-g(x)+f(y)-g(y)+\frac{2}{\varepsilon}(f(x)-g(x)+f(y)-g(y))}{\|x+y\|}\cdot\|x-y\|\\
 &=(1+2/\varepsilon)C\cdot\frac{(f-g)(x)+(f-g)(y)}{\|x+y\|}\cdot\|x-y\|,
\end{align*}
completing the proof.
\end{proof}

\begin{claim} \label{clm:rellipproduct}
If $f,g:\mathbb{R}^m_{>0}\to\mathbb{R}_{>0}$ are relative Lipschitz functions then $f\cdot g$ is relative Lipschitz. 
In particular,  if $a>0$, then 
$a\cdot f$ is relative Lipschitz. 
\end{claim}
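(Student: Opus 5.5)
The plan is to bound the product difference directly with the identity
\[
f(x)g(x)-f(y)g(y)=\tfrac12\bigl(f(x)-f(y)\bigr)\bigl(g(x)+g(y)\bigr)+\tfrac12\bigl(f(x)+f(y)\bigr)\bigl(g(x)-g(y)\bigr),
\]
which is checked by expanding the right-hand side. Write $r:=\|x-y\|/\|x+y\|$ and let $C_f,C_g$ be the constants from~\eqref{eq:asymp_Lip}. The relative Lipschitz condition for $f$ and for $g$ then gives
\[
|fg(x)-fg(y)|\leq \tfrac12(C_f+C_g)\,r\,\bigl(f(x)+f(y)\bigr)\bigl(g(x)+g(y)\bigr).
\]

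It remains to compare $(f(x)+f(y))(g(x)+g(y))$ with $f(x)g(x)+f(y)g(y)$, and this is the main obstacle. The cross terms $f(x)g(y)+f(y)g(x)$ are not controlled by the diagonal terms in general. For instance, with $f(x)=M$, $f(y)=1$, $g(x)=1$, $g(y)=M$ the cross terms are $M^2+1$ against diagonal terms $2M$.

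To handle this I would split according to the size of $r$. The left-hand ratio $|fg(x)-fg(y)|/(fg(x)+fg(y))$ is always at most $1$. So if $r\geq c$ for a small constant $c$, the inequality holds trivially with constant $1/c$.

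If $r<c$, I would use~\eqref{eq:asymp_Lip} for $f$ to get $|f(x)-f(y)|\leq C_f r\,(f(x)+f(y))$. With $c\leq 1/(2C_f)$ this forces $f(x)/f(y)\in[1/3,3]$. Similarly, taking $c\leq 1/(2C_g)$ forces $g(x)/g(y)\in[1/3,3]$. Under these ratio bounds each cross term is at most $9$ times a diagonal term; for example, $f(x)g(y)\leq 3f(y)\cdot g(y)$. This gives
\[
\bigl(f(x)+f(y)\bigr)\bigl(g(x)+g(y)\bigr)\leq 10\bigl(f(x)g(x)+f(y)g(y)\bigr).
\]
Hence, in the case $r<c$, the product $fg$ satisfies~\eqref{eq:asymp_Lip} with constant $5(C_f+C_g)$. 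Taking $C=\max\{1/c,\,5(C_f+C_g)\}$ with $c=\min\{1/(2C_f),1/(2C_g)\}$ covers both cases, and positivity of $fg$ is immediate.

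For the special case $a\cdot f$, note that the constant function $g\equiv a>0$ is trivially relative Lipschitz, since its difference vanishes. Alternatively, the ratio in~\eqref{eq:asymp_Lip} is visibly invariant under multiplying $f$ by $a$. Either way, the general statement applies.
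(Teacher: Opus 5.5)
Your proof is correct, but it takes a different route from the paper.

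The paper never compares $(f(x)+f(y))(g(x)+g(y))$ with $f(x)g(x)+f(y)g(y)$. Instead of your symmetric identity, it uses an asymmetric telescoping, choosing the version according to which $g$-value is smaller:
\begin{itemize}
\item if $g(y)\le g(x)$: $fg(x)-fg(y)=f(x)\bigl(g(x)-g(y)\bigr)+g(y)\bigl(f(x)-f(y)\bigr)$;
\item otherwise, the mirror version: $fg(x)-fg(y)=f(y)\bigl(g(x)-g(y)\bigr)+g(x)\bigl(f(x)-f(y)\bigr)$.
\end{itemize}
Because the difference of $f$ is weighted by the smaller of the two $g$-values, only one cross term survives after applying the relative Lipschitz bounds. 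In the first case it is $f(x)g(y)$, and it is dominated outright by the diagonal term $f(x)g(x)$. The bound for $fg$ then follows in one line with constant at most $2(C_f+C_g)$. There is no case split on $r$ and no need to compare $f(x)$ with $f(y)$. Your example $f(x)=g(y)=M$, $f(y)=g(x)=1$ is simply handled by the mirror identity.

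Your argument rests instead on a more conceptual principle. The inequality is vacuous once $r$ is bounded below, since the left-hand ratio never exceeds $1$. For small $r$, it forces $f(x)/f(y)$ and $g(x)/g(y)$ into $[1/3,3]$, after which cross terms are harmless. This costs a case split and a worse constant. Your factor $10$ is also loose: $f(x)g(y)\le 3f(y)g(y)$ and $f(y)g(x)\le 3f(x)g(x)$ already give $4$.

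What your approach buys is reusability. It is the same ``values within a factor $3$'' mechanism used in the proof of Claim~\ref{cl:poly-bound}, and it adapts with little extra algebra to other operations such as powers or quotients. The paper's telescoping trick, by contrast, is tailored to products.
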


\begin{proof}
Let $x\neq y\in\mathbb{R}^m_{>0}$. We get that $fg$ has positive values and 
\begin{align}
 \left|(fg)(x)-(fg)(y)\right|&=
 |f(x)g(x)-f(x)g(y)+f(x)g(y)-f(y)g(y)|\notag\\
 &\leq
 f(x)|g(x)-g(y)|+g(y)|f(x)-f(y)|\notag\\
 &\leq \left(f(x)\cdot C_g\cdot\frac{g(x)+g(y)}{\|x+y\|}+g(y)\cdot C_f\cdot\frac{f(x)+f(y)}{\|x+y\|}\right)\|x-y\|.
\label{eq:product_1}
\end{align}
In the same way,
\begin{align}
 \left|(fg)(x)-(fg)(y)\right|\leq\left(f(y)\cdot C_g\cdot\frac{g(x)+g(y)}{\|x+y\|}+g(x)\cdot C_f\cdot\frac{f(x)+f(y)}{\|x+y\|}\right)\|x-y\|.
\label{eq:product_2}
\end{align}
If $g(y)\leq g(x)$, then~\eqref{eq:product_1} implies
\begin{align}
 \left|(fg)(x)-(fg)(y)\right|\leq\frac{(2C_g+C_f)f(x)g(x)+C_f f(y)g(y)}{\|x+y\|}\|x-y\|\leq C\frac{(fg)(x)+(fg)(y)}{\|x+y\|}\|x-y\|,
\label{eq:product-conclusion}
\end{align}
where $C=2C_g+C_f$. If $g(y)>g(x)$, then~\eqref{eq:product_2} implies the conclusion of~\eqref{eq:product-conclusion} with $C=2C_f+C_g$ as well, completing the proof.
\end{proof}

\begin{claim} \label{cl:all-powers}
If $f:\mathbb{R}^m_{>0}\to\mathbb{R}_{>0}$ is relative Lipschitz and $b\in\mathbb{R}$ is a constant, then $f^b$ is relative Lipschitz.
\end{claim}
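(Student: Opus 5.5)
The plan is to reduce everything to a one-dimensional inequality about the map $t\mapsto t^b$ on $\mathbb{R}_{>0}$, and then compose it with the relative Lipschitz bound for $f$. Positivity of $f^b$ is immediate, since $f>0$. The case $b=0$ is trivial: $f^b\equiv 1$, so the left-hand side of~\eqref{eq:asymp_Lip} vanishes. For $b<0$ we write $f^b=(1/f)^{|b|}$. The function $1/f$ is relative Lipschitz with the same constant, because
$$\frac{|1/f(x)-1/f(y)|}{1/f(x)+1/f(y)}=\frac{|f(y)-f(x)|}{f(x)+f(y)}.$$
So it suffices to treat $b>0$.

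The key one-dimensional lemma I would prove is the following: for every $b>0$ there is $C_b>0$ such that, for all $s,t>0$,
$$\frac{|s^b-t^b|}{s^b+t^b}\leq C_b\,\frac{|s-t|}{s+t}.$$
Both sides are homogeneous of degree $0$ in $(s,t)$, and both are symmetric. So we may set $t=1$ and $s=r\geq 1$, and the inequality becomes
$$h(r):=\frac{(r^b-1)(r+1)}{(r^b+1)(r-1)}\leq C_b \qquad\text{for } r>1.$$
As $r\to 1^+$, L'Hôpital gives $h(r)\to 2b/2=b$. As $r\to\infty$, $h(r)\to 1$. Since $h$ is continuous on $(1,\infty)$, it is bounded there, and this gives $C_b$.

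Alternatively, one can use Remark~\ref{rk:Hillbert}: $\frac{|s-t|}{s+t}=g(d(s,t))$ with $g(t)=\tanh(t/2)$, and $d(s^b,t^b)=b\,d(s,t)$. The claim then reduces to $\tanh(bu/2)\leq C_b\tanh(u/2)$ for $u\geq 0$. This holds because the ratio $\tanh(bu/2)/\tanh(u/2)$ tends to $b$ as $u\to 0$ and to $1$ as $u\to\infty$.

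Composing the two bounds, with $s=f(x)$ and $t=f(y)$, gives
$$\frac{|f^b(x)-f^b(y)|}{f^b(x)+f^b(y)}\leq C_b\frac{|f(x)-f(y)|}{f(x)+f(y)}\leq C_bC_f\frac{\|x-y\|}{\|x+y\|},$$
which is exactly the relative Lipschitz condition for $f^b$. The only step needing care is the boundedness of $h$: checking the limit near $r=1$ and the behaviour at infinity. This is routine, so I do not expect a real obstacle. An alternative for integer $b$ would be to iterate Claim~\ref{clm:rellipproduct}, but that does not cover real exponents, so the direct one-dimensional argument is the one I would use.
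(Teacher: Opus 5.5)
Your proof is correct, but it takes a different route from the paper's.

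\textbf{Your route.} You factor the claim through the one-dimensional inequality $\frac{|s^b-t^b|}{s^b+t^b}\le C_b\frac{|s-t|}{s+t}$. Equivalently, $d(s^b,t^b)=|b|\,d(s,t)$ in the Hilbert metric of Remark~\ref{rk:Hillbert}, so this is essentially the case $k=m=1$ of Claim~\ref{cl:compose}. You then dispose of $b<0$ using the exact invariance of the relative Lipschitz ratio under $f\mapsto 1/f$.

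\textbf{The paper's route.} The paper works directly with $f$. Assuming $f(x)\ge f(y)$, it proves $f^b(x)-f^b(y)\le \frac{2(f(x)-f(y))}{f^{1-b}(x)+f^{1-b}(y)}$. It does so by showing that $g(b)=f^b(x)f^{1-b}(y)-f^b(y)f^{1-b}(x)$ is nondecreasing in $b$, so $g(b)\le g(1)=f(x)-f(y)$. It then uses $f(x)+f(y)\le (f^b(x)+f^b(y))(f^{1-b}(x)+f^{1-b}(y))$. Negative exponents are handled by rewriting $f^b(y)-f^b(x)$ as $\frac{f^{-b}(x)-f^{-b}(y)}{f^{-b}(x)f^{-b}(y)}$ and reusing the positive case.

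\textbf{A limitation of the paper's argument.} The monotonicity step only yields $g(b)\le g(1)$ when $b\le 1$, and the resulting inequality genuinely fails for $b>1$. For example, with $b=2$, $f(x)=r>1$ and $f(y)=1$, the left side is $r^2-1$ while the right side is $2r(r-1)/(r+1)$. So the paper's argument as written covers $0<|b|\le 1$, with constant $2C$. Reaching $|b|>1$ would additionally require writing $f^b=f^{\lfloor b\rfloor}f^{b-\lfloor b\rfloor}$ and invoking Claim~\ref{clm:rellipproduct}.

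\textbf{What each approach buys.} Your argument treats every real $b$ uniformly. If you make the bound on $h$ explicit using monotonicity and concavity of $\tanh$ on $[0,\infty)$, you get $\tanh(|b|u/2)\le\max\{|b|,1\}\tanh(u/2)$. This gives the constant $\max\{|b|,1\}\,C_f$, which is sharper than the paper's $2C$ when $|b|\le 1$. The paper's computation has the merit of being purely algebraic and self-contained, without appealing to the Hilbert-metric or composition viewpoint.
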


\begin{proof}
Assume $b\neq 0$ --- otherwise the claim is obvious. Since $f>0$, we have that $f^b>0$ as well. Let $x\neq y\in\mathbb{R}^m_{>0}$. Without loss of generality, assume $f(x)\geq f(y)$.

Let $b>0$. We first observe that 
$$
f^b(x)-f^b(y)\leq \frac{2(f(x)-f(y))}{f^{1-b}(x)+f^{1-b}(y)}.
$$
Indeed,
$(f^b(x)-f^b(y))(f^{1-b}(x)+f^{1-b}(y))=f(x)-f(y)+f^b(x)f^{1-b}(y)-f^b(y)f^{1-b}(x)$ and the function 
$$
g(b):=f^b(x)f^{1-b}(y)-f^b(y)f^{1-b}(x)=f(y)(f(x)/f(y))^b-f(x)(f(y)/f(x))^b
$$ 
has 
$$
g'(b)=f(y)\ln(f(x)/f(y))(f(x)/f(y))^b+f(x)\ln(f(x)/f(y))(f(x)/f(y))^b\geq 0,
$$
implying $\max_{b\in[0,1]}g(b)=g(1)=f(x)-f(y)$.

Then
\begin{align*}
 f^b(x)-f^b(y)&\leq\frac{2(f(x)+f(y))}{f^{1-b}(x)+f^{1-b}(y)}\cdot C\cdot\frac{\|x-y\|}{\|x+y\|}\leq 2C\cdot\frac{f^b(x)+f^b(y)}{\|x+y\|}\|x-y\|,
\end{align*}
as needed.

Finally, let $b<0$. We get
\begin{align*}
  |f^b(x)-f^b(y)| =f^b(y)-f^b(x) =\frac{f^{-b}(x)-f^{-b}(y)}{f^{-b}(x)f^{-b}(y)}&\leq
  2C\cdot\frac{f^{-b}(x)+f^{-b}(y)}{f^{-b}(x)f^{-b}(y)}\cdot\frac{\|x-y\|}{\|x+y\|}\\
 &=2C\cdot\frac{f^{b}(x)+f^{b}(y)}{\|x+y\|}\cdot \|x-y\|,
\end{align*}
completing the proof.
\end{proof}

\begin{claim} \label{cl:min-max}
Let $f,g:\mathbb{R}^m_{>0}\to\mathbb{R}_{>0}$ be relative Lipschitz functions. Then $\max\{f,g\}$ and $\min\{f,g\}$ are relative Lipschitz.
\end{claim}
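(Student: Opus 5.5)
We prove the claim for $h=\max\{f,g\}$ directly, and derive the case of $\min$ from it. Positivity of $h$ is immediate. Fix $x\neq y\in\mathbb{R}^m_{>0}$ and let $C=\max\{C_f,C_g\}$. The goal is
$$
|h(x)-h(y)|\leq C\cdot\frac{h(x)+h(y)}{\|x+y\|}\,\|x-y\|.
$$

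\emph{Case split.} Without loss of generality $h(x)\geq h(y)$.
\begin{compactitem}
\item If the same function attains both maxima (say $h(x)=f(x)$ and $h(y)=f(y)$), then $|h(x)-h(y)|=|f(x)-f(y)|$. The relative Lipschitz bound for $f$ gives the claim, since $f(x)+f(y)=h(x)+h(y)$.
\item If $h(x)=f(x)$ and $h(y)=g(y)$, then $h(x)-h(y)=f(x)-g(y)\leq f(x)-f(y)$, because $g(y)\geq f(y)$. We also have $f(x)+f(y)\leq h(x)+h(y)$. So the bound for $f$ again suffices.
\end{compactitem}
The key observation is that the difference of maxima is always dominated by the difference of the function attaining the larger maximum. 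Both $f$-values are then bounded by the corresponding $h$-values, so the denominator-type factor $f(x)+f(y)$ is also dominated. The constant $C$ works in all cases.

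\emph{The minimum.} For $\min\{f,g\}$ I will use Claim~\ref{cl:all-powers} with $b=-1$. By that claim, $1/f$ and $1/g$ are relative Lipschitz, and so is $\max\{1/f,1/g\}$ by the previous step. Then
$$
\min\{f,g\}=\left(\max\{1/f,1/g\}\right)^{-1}
$$
is relative Lipschitz by Claim~\ref{cl:all-powers} once more.

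Alternatively, I could run the mirrored direct argument. Take $h=\min\{f,g\}$ with $h(x)\geq h(y)$ and $h(y)=g(y)$. Then $h(x)-h(y)\leq g(x)-g(y)$. However, the factor $g(x)+g(y)$ need not be bounded by $h(x)+h(y)$, because $g(x)$ may be much larger than $f(x)=h(x)$. This is exactly the obstacle that makes the direct approach fail, and why I prefer the reciprocal trick.

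I expect no real difficulty. The only subtle point is the one just described: the naive symmetric argument does not transfer from $\max$ to $\min$, which is handled by passing through reciprocals.
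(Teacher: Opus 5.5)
Your proof is correct and follows the paper's route: the same two-case split for $\max\{f,g\}$, and the same identity $\min\{f,g\}=1/\max\{1/f,1/g\}$ with Claim~\ref{cl:all-powers} applied twice. In the mixed case your bound $h(x)-h(y)\le f(x)-f(y)$ even gives the sharper constant $\max\{C_f,C_g\}$ instead of the paper's $2(C_f+C_g)$. Your side remark that a direct argument for $\min$ fails is overstated: if $\|x-y\|/\|x+y\|<1/(2C_g)$, the relative Lipschitz bound forces $g(x)\le 3g(y)=3h(y)$, and otherwise the inequality is trivial. This does not affect your proof.
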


\begin{proof}
We have that $\max\{f,g\},\min\{f,g\}:\mathbb{R}^m_{>0}\to\mathbb{R}_{>0}$.
Let $x\neq y\in\mathbb{R}^m_{>0}$.

If $\max\{f(x),g(x)\}$ and $\max\{f(y),g(y)\}$ are achieved at the same function (say, $f$), then
\begin{align*}
 \left|\max\{f(x),g(x)\}-\max\{f(y),g(y)\}\right|
 &=|f(x)-f(y)|\leq C_f(f(x)+f(y))\frac{\|x-y\|}{\|x+y\|}\\
    &=C_f(\max\{f(x),g(x)\}+\max\{f(y),g(y)\})\frac{\|x-y\|}{\|x+y\|}.
\end{align*}
Without loss of generality, assume $\max\{f(x),g(x)\}=f(x)$ and $\max\{f(y),g(y)\}=g(y)$. Then
\begin{align*}
 \left|\max\{f(x),g(x)\}-\max\{f(y),g(y)\}\right|
 &=|f(x)-g(y)|\leq\max\{|f(x)-f(y)|,|g(x)-g(y)|\}\\
 &\leq\max\{C_f(f(x)+f(y)),C_g(g(x)+g(y))\}\frac{\|x-y\|}{\|x+y\|}\\
  &\leq 2(C_f+C_g)\max\{f(x),f(y),g(x),g(y)\}\frac{\|x-y\|}{\|x+y\|}\\
    &\leq 2(C_f+C_g)(\max\{f(x),g(x)\}+\max\{f(y),g(y)\})\frac{\|x-y\|}{\|x+y\|},\\
\end{align*}
as desired. 

Since $\min\{f,g\}=\frac{1}{\max\{1/f,1/g\}}$, we get the desired assertion for $\min\{f,g\}$ by applying Claim~\ref{cl:all-powers} with $b=-1$ twice.

\end{proof}

All the above claims can be also derived from the following general fact that the class $\functionclassrellip$ is composition-preserving.

\begin{claim}\label{cl:compose}
If $f:\mathbb{R}^k_{>0}\to\mathbb{R}_{>0}$ is relative Lipschitz and $f_1,\ldots,f_k:\mathbb{R}^m_{>0}\to\mathbb{R}_{>0}$ are relative Lipschitz, then $g:=f(f_1,\ldots,f_k):\mathbb{R}^m_{>0}\to\mathbb{R}_{>0}$ is relative Lipschitz as well.
\end{claim}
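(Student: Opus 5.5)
The plan is to chain the two relative Lipschitz inequalities: first the one for $f$, evaluated at the image points, and then the ones for the $f_i$, which control how far apart those image points are. Fix $x\neq y\in\mathbb{R}^m_{>0}$ and set $u:=(f_1(x),\ldots,f_k(x))$ and $v:=(f_1(y),\ldots,f_k(y))$. Both $u$ and $v$ lie in $\mathbb{R}^k_{>0}$, since each $f_i$ takes positive values. Hence $g(x)=f(u)$ and $g(y)=f(v)$ are positive, so $g$ maps into $\mathbb{R}_{>0}$. Applying \eqref{eq:asymp_Lip} to $f$ at the points $u,v$ gives
$$
|g(x)-g(y)|\leq C_f\,(g(x)+g(y))\,\frac{\|u-v\|}{\|u+v\|}.
$$
If $u=v$ the left-hand side is $0$ and there is nothing to prove. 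So it suffices to show $\frac{\|u-v\|}{\|u+v\|}\leq C'\frac{\|x-y\|}{\|x+y\|}$ for a constant $C'$ depending only on $f_1,\ldots,f_k$.

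For this I will use the coordinatewise inequalities. Writing $r:=\|x-y\|/\|x+y\|$, the relative Lipschitz property of $f_i$ gives
$$
|u_i-v_i|\leq C_{f_i}\,(u_i+v_i)\,r\leq C^*(u_i+v_i)\,r,\qquad C^*:=\max_i C_{f_i}.
$$
The key observation is that the Euclidean norm is monotone on vectors with nonnegative coordinates. Every coordinate $u_i+v_i$ is positive, so the vector $(|u_i-v_i|)_i$ is dominated coordinatewise by $C^*r\,(u_i+v_i)_i$. Squaring, summing and taking square roots then gives $\|u-v\|\leq C^* r\,\|u+v\|$. Substituting this into the first display yields
$$
|g(x)-g(y)|\leq C_fC^*\,(g(x)+g(y))\,\frac{\|x-y\|}{\|x+y\|},
$$
which is \eqref{eq:asymp_Lip} for $g$ with constant $C_fC^*$.

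There is no real obstacle here. The only point needing care is the passage from the coordinatewise bounds to the bound on the norm ratio. It works precisely because the "denominator" vector $u+v$ has positive entries, so no cancellation can occur, and because the same factor $r$ appears in every coordinate. Note that I never need $r<1$ or any smallness: the inequality is used for arbitrary $x,y$.
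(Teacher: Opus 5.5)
Your proof is correct and takes the same route as the paper: apply the relative Lipschitz inequality for $f$ at the image points, then bound each $|f_i(x)-f_i(y)|$ by $C_{f_i}(f_i(x)+f_i(y))\,\|x-y\|/\|x+y\|$ and compare the two norms. The only difference is the constant: the paper ends up with $C_f(C_1+\cdots+C_k)$, whereas your coordinatewise-domination step gives the slightly sharper $C_f\max_i C_{f_i}$.
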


\begin{proof}
Let $x\neq y\in\mathbb{R}^m_{>0}.$ Then
\begin{align*}
 |g(x)-g(y)|&\leq C_f(g(x)+g(y))\cdot\frac{\|(f_1(x)-f_1(y),\ldots,f_k(x)-f_k(y))\|}{\|(f_1(x)+f_1(y),\ldots,f_k(x)+f_k(y))\|}\\
 &\leq C_f(g(x)+g(y))\cdot\frac{\|(C_1(f_1(x)+f_1(y)),\ldots,C_k(f_k(x)+f_k(y)))\|\cdot\|x-y\|/\|x+y\|}{\|(f_1(x)+f_1(y),\ldots,f_k(x)+f_k(y))\|}\\
 &\leq C_f(C_1+\ldots+C_k)(g(x)+g(y))\cdot\frac{\|x-y\|}{\|x+y\|},
\end{align*}
completing the proof.
\end{proof}

Note that, if $k=m=1$, then Claim~\ref{cl:compose} immediately follows from the equivalence between relative Lipschitz and Lipschitz in the transformed Hilbert metric, see Remark~\ref{rk:Hillbert}.

The results above 
show that the class of relative Lipschitz functions is fairly rich: together with the fact that $f(x_1,\ldots,x_m)=x_i$ is relative Lipschitz, which follows directly from the definition, they imply

\begin{corollary}
The following are relative Lipschitz
\begin{compactitem}
\item polynomials $f:\mathbb{R}^m_{\geq 0}\to\mathbb{R}_{\geq 0}$ with non-negative coefficients;
\item $f-g$, where $f,g:\mathbb{R}^m_{\geq 0}\to\mathbb{R}_{\geq 0}$ are polynomials with non-negative coefficients, such that, for some $\varepsilon>0$ and all $x\in\mathbb{R}^m_{\geq 0}$, $f(x)\geq(1+\varepsilon)g(x)$;
\item any function $f$ such that, for every $I$ of size $m'\geq 1$, we have 
$$
f_I(x_1,\ldots,x_{m'})=x_1^{\alpha_1}\ldots x_{m'}^{\alpha_{m'}},
$$
where $\alpha_1,\ldots,\alpha_{m'}\in\mathbb{R}$ are arbitrary constants;
\item $\max\{x_1,\ldots,x_m\}$ and $\min\{x_1,\ldots,x_m\}$.
\end{compactitem}
\label{cor:classes}
\end{corollary}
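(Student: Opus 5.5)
The corollary is a bootstrapping statement. I would start from the coordinate projections $x\mapsto x_i$ and the positive constants, and then close under the operations from Claims~\ref{cl:sum-dif}, \ref{clm:rellipproduct}, \ref{cl:all-powers}, \ref{cl:min-max} and~\ref{cl:compose}. Since relative Lipschitz on $\mathbb{R}^m_{\geq 0}$ is a condition on every specialisation $f_I$, each item has two parts. First I identify $f_I$ for each nonempty $I$. Then I check that $f_I$ is either identically zero or a positive relative Lipschitz function on $\mathbb{R}^{|I|}_{>0}$.

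\textbf{Preliminary checks.} A projection $x_i$ on $\mathbb{R}^m_{>0}$ satisfies $|x_i-y_i|/(x_i+y_i)\leq \|x-y\|/\|x+y\|$. I would prove this by comparing coordinatewise, noting $\|x+y\|\leq \sqrt{m}\max_j(x_j+y_j)$. The one subtlety is that the ratio is bounded by a constant only up to a factor. So I would verify directly that $|x_i-y_i|\,\|x+y\|\leq C (x_i+y_i)\|x-y\|$ fails in general and must be handled with care. If the direct bound fails, I would fall back on the paper's assertion that projections "follow directly from the definition" and take that as given. Constants $a>0$ are trivially relative Lipschitz with $C=0$.

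\textbf{Polynomials.} Let $f$ have non-negative coefficients. Then $f_I$ is the polynomial obtained by deleting every monomial that contains a variable outside $I$. So $f_I$ is either identically zero, when every monomial involves some $j\notin I$, or a nonzero polynomial with non-negative coefficients in the variables indexed by $I$. In the nonzero case it is positive on $\mathbb{R}^{|I|}_{>0}$. Each monomial is a product of projections with a positive coefficient, so it is relative Lipschitz by Claim~\ref{clm:rellipproduct}. Sums of monomials are relative Lipschitz by Claim~\ref{cl:sum-dif}.

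\textbf{Differences.} For $f-g$ with $f\geq(1+\varepsilon)g$ on $\mathbb{R}^m_{\geq0}$, the inequality persists under specialisation: $f_I\geq(1+\varepsilon)g_I$. If $g_I\equiv0$, then $(f-g)_I=f_I$, which is covered by the polynomial case. Otherwise $g_I>0$ on the open orthant, hence $f_I>0$ as well. The second part of Claim~\ref{cl:sum-dif} then applies to $f_I-g_I$.

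\textbf{Monomials with real exponents.} Each $x_j^{\alpha_j}$ is relative Lipschitz by Claim~\ref{cl:all-powers} applied to a projection. The product over $j\in I$ is then relative Lipschitz by Claim~\ref{clm:rellipproduct}.

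\textbf{Max and min.} Here $f_I$ is $\max$ or $\min$ of the coordinates in $I$, because the zero coordinates drop out. For $\max$ this is immediate since the remaining coordinates are positive. For $\min$ with $I\neq[m]$, however, the specialisation is identically $0$, which is permitted. Iterating Claim~\ref{cl:min-max} over the coordinates gives the result.

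\textbf{Main obstacle.} The step I expect to be most delicate is the base case for projections in dimension $m>1$. Every other item reduces mechanically to the earlier claims plus the bookkeeping of specialisations.
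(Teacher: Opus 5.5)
Your route is the paper's route. The paper's entire justification is the remark that coordinate projections are relative Lipschitz ``directly from the definition'', combined with Claims~\ref{cl:sum-dif}, \ref{clm:rellipproduct}, \ref{cl:all-powers} and~\ref{cl:min-max}. Your bookkeeping of specialisations is correct and more careful than the paper's: dropping monomials that contain a variable outside $I$, using $f_I\geq(1+\varepsilon)g_I$, and noting $\min_I\equiv 0$ for $I\neq[m]$.

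\textbf{The gap: the base case is false.} The base case you flag is not merely delicate. With the definition as written it is false for $m\geq 2$, so neither your sketch nor ``taking the paper's assertion as given'' can close it. Your first displayed inequality already fails: for $x=(1,N)$ and $y=(2,N)$ in $\mathbb{R}^2_{>0}$,
\[
\frac{|x_1-y_1|}{x_1+y_1}=\frac13,\qquad \frac{\|x-y\|}{\|x+y\|}=\frac{1}{\sqrt{9+4N^2}}\to 0,
\]
and by equivalence of norms no choice of norm or constant $C$ rescues it. The bound $\|x+y\|\leq\sqrt{m}\max_j(x_j+y_j)$ compares with the \emph{largest} coordinate, not with coordinate $i$, so it cannot give the projection bound.

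\textbf{Consequences for the statement.} The same pair gives ratio $\frac13$ for $x_1x_2$, $x_1/x_2$, $\min\{x_1,x_2\}$ and $2x_1-x_1$. So under the literal definition, bullets 1--3 and the $\min$ half of bullet 4 are false for $m\geq 2$. They are fine for $m=1$, where the projection is trivially relative Lipschitz with $C=1$. The $\max$ survives, but not by iterating Claim~\ref{cl:min-max}; it follows directly from $|\max x-\max y|\leq\|x-y\|$ and $\max x+\max y\geq\|x+y\|/\sqrt m$.

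\textbf{The repair.} Read the right-hand side of~\eqref{eq:asymp_Lip} coordinatewise, i.e.\ as $C\max_i\frac{|x_i-y_i|}{x_i+y_i}$.
\begin{itemize}
\item Projections are then relative Lipschitz with $C=1$.
\item The proofs of Claims~\ref{cl:sum-dif}--\ref{cl:min-max} treat $\|x-y\|/\|x+y\|$ as an opaque nonnegative quantity, so they go through verbatim with it replaced.
\item The proof of Theorem~\ref{cl:function-concentration} only feeds in coordinatewise relative errors. There $|\xi_i-\varphi_i|\leq\delta'\varphi_i$ bounds each coordinate ratio by $\delta'$, so the theorem survives the change.
\end{itemize}
With that change your argument is a complete proof. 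Without it, the statement you are asked to prove is not true, and the paper's own one-line justification has the same hole.
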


\paragraph{Limitations of relative Lipschitz.}
Not every polynomial $f:\mathbb{R}^m_{\geq 0}\to\mathbb{R}_{\geq 0}$ is relative Lipschitz --- the assumption in the second bullet point in Corollary~\ref{cor:classes} is essential. The following claim gives illustrative examples of not relative Lipschitz functions.

\begin{claim}
$\,$
\begin{itemize}
\item $2+\sin(x)$ is Lipschitz but not relative Lipschitz on $[0,\infty)$. Moreover, there are Lipschitz functions $f:\mathbb{R}^m_{>0}\to\mathbb{R}_{>0}$ that are not relative Lipschitz such that $f$ is constant on $(\mathbb{R}_{\geq 0}\setminus [0,C))^m$ for some $C$.
\item There are polynomials $f:\mathbb{R}^m_{\geq 0}\to\mathbb{R}_{\geq 0}$ that are not relative Lipschitz. In particular, $(x_1-x_2)^2+x_1^2\in \functionclassrellip$ while $(x_1-x_2)^2\notin \functionclassrellip$ and $(x_1-x_2)^2+x_2\notin \functionclassrellip$.
\end{itemize}
\label{cl:counterexamples}
\end{claim}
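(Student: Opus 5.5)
The plan is to verify each example directly from the definition of relative Lipschitz. For the non-membership examples I would exhibit explicit pairs of points $x,y$ along which the left-hand side of \eqref{eq:asymp_Lip} stays bounded away from $0$ while $\|x-y\|/\|x+y\|\to 0$.

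\emph{First bullet.} Let $f(x)=2+\sin x$. It is Lipschitz with constant $1$, since $|f'|\le 1$. Take $x_k=2\pi k+\pi/2$ and $y_k=2\pi k+3\pi/2$. Then $f(x_k)=3$ and $f(y_k)=1$, so
$$\frac{|f(x_k)-f(y_k)|}{f(x_k)+f(y_k)}=\frac12, \qquad \frac{|x_k-y_k|}{x_k+y_k}=\frac{\pi}{4\pi k+2\pi}\to 0 .$$
Hence no constant $C$ works.

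For the ``moreover'' part the oscillation must be moved close to the origin. On $\mathbb{R}^m_{>0}$ I would take $f(x)=h(x_1)$, where
$$h(t)=t\,(2+\sin(1/t)) \text{ for } t\le 1, \qquad h(t)=2+\sin 1 \text{ for } t\ge 1 .$$
This $h$ is positive on $t>0$. For $t<1$ we have $h'(t)=2+\sin(1/t)-\cos(1/t)/t\cdot t\cdot(1/t)\cdot t$, which simplifies to $2+\sin(1/t)-\cos(1/t)$, so $|h'|\le 4$. Since $h$ is continuous at $t=1$, it is Lipschitz. It is constant once every coordinate is at least $C=1$.

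To see that $f$ is not relative Lipschitz, choose $x_1$ and $y_1$ with $1/x_1=2\pi k+\pi/2$ and $1/y_1=2\pi k+3\pi/2$, and set all other coordinates equal, say to $1$. Then $h(x_1)=3x_1$ and $h(y_1)=y_1$, with $x_1/y_1\to 1$. So the left-hand side tends to $1/2$, while $\|x-y\|/\|x+y\|\le |x_1-y_1|\to 0$. (For $m=1$ the ratio is $|x_1-y_1|/(x_1+y_1)\sim 1/(4\pi k)$, which also tends to $0$.)

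\emph{Second bullet, membership.} Let $f=(x_1-x_2)^2+x_1^2$. I would check every specialisation separately.
\begin{itemize}
\item $I=\varnothing$: $f_I\equiv 0$, which is allowed.
\item $I=\{1\}$: $f_I=2x_1^2$, a positive monomial, so it is relative Lipschitz by Corollary~\ref{cor:classes}.
\item $I=\{2\}$: $f_I=x_2^2$, likewise.
\item $I=\{1,2\}$: this is the main case.
\end{itemize}
For $I=\{1,2\}$, the function is the positive definite quadratic form $2x_1^2-2x_1x_2+x_2^2$. So $f(x)\ge c\|x\|^2$ for some $c>0$, and $\|\nabla f(x)\|\le C'\|x\|$. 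By the mean value theorem on the segment $[x,y]$,
$$|f(x)-f(y)|\le C'(\|x\|+\|y\|)\|x-y\| .$$
For nonnegative vectors $\|x+y\|^2\ge \|x\|^2+\|y\|^2\ge (\|x\|+\|y\|)^2/2$. Together with $f(x)+f(y)\ge \tfrac c2(\|x\|+\|y\|)^2$, this gives
$$|f(x)-f(y)|\le C\,(f(x)+f(y))\,\|x-y\|/\|x+y\|$$
with $C=2\sqrt2\,C'/c$.

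\emph{Second bullet, non-membership.} The full specialisation of $(x_1-x_2)^2$ vanishes at $x_1=x_2$ but is not identically zero. So it is neither positive-valued nor $\equiv 0$, and it fails the definition at once.

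For $g=(x_1-x_2)^2+x_2$, the full specialisation is positive. Take $x=(t,t)$ and $y=(t+\sqrt t,t)$. Then $g(x)=t$ and $g(y)=2t$, so the left-hand side equals $1/3$. Meanwhile $\|x-y\|/\|x+y\|=\sqrt t/\|(2t+\sqrt t,2t)\|\to 0$ as $t\to\infty$. Hence $g$ is not relative Lipschitz.

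The only step requiring care is choosing the near-origin oscillating example in the first bullet. Its derivative has to stay bounded so that $f$ is Lipschitz, while the function still oscillates at multiplicative scale; the factor $t$ in front of $\sin(1/t)$ achieves both.
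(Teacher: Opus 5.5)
There is a genuine gap in the ``moreover'' part of the first bullet: your function $h(t)=t\,(2+\sin(1/t))$ is not Lipschitz on $(0,1]$. The derivative was miscomputed. In fact $\frac{d}{dt}\bigl(t\sin(1/t)\bigr)=\sin(1/t)-\cos(1/t)/t$, so $h'(t)=2+\sin(1/t)-\cos(1/t)/t$, which is unbounded as $t\to 0$.

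Concretely, take $t_k=1/(2\pi k)$ and $s_k=1/(2\pi k+\pi/2)$. Then $h(t_k)=2t_k$ and $h(s_k)=3s_k$, so $|h(t_k)-h(s_k)|\sim 1/(2\pi k)$ while $|t_k-s_k|\sim 1/(8\pi k^2)$. Indeed, $t\sin(1/t)$ is not even of bounded variation on $(0,1]$.

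This is not a cosmetic slip. In one variable, suppose $f$ is $L$-Lipschitz and $|f(x)-f(y)|\ge c\,(f(x)+f(y))$ along pairs with $|x-y|/(x+y)\to 0$. Then $f(x)+f(y)\le (L/c)|x-y|=o(x+y)$, so $f$ must be $o(t)$ at the offending points. An oscillation of amplitude $\asymp t$, as in your construction, can therefore never work. This is why the paper prescribes values of order $t^2$: it sets $f(1/n)=n^{-2}$ or $2n^{-2}$ according to the parity of $n$, and checks $|f(1/k_1)-f(1/k_2)|\le 4|k_1^{-1}-k_2^{-1}|$ before extending.

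Your construction is repaired by the same change. Take $h(t)=t^2(2+\sin(1/t))$ for $t\le 1$ and $h\equiv 2+\sin 1$ for $t\ge 1$. This $h$ is positive and satisfies $|h'(t)|=|2t(2+\sin(1/t))-\cos(1/t)|\le 7$. At your points $1/x_1=2\pi k+\pi/2$ and $1/y_1=2\pi k+3\pi/2$ it gives $h(x_1)=3x_1^2$ and $h(y_1)=y_1^2$. So the left-hand side of \eqref{eq:asymp_Lip} still tends to $1/2$, while $\|x-y\|/\|x+y\|\to 0$. Your closing remark that the factor $t$ ``achieves both'' is exactly the step that fails.

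The remaining parts are correct:
\begin{itemize}
\item For $2+\sin x$ you use the same kind of pair as the paper.
\item For $(x_1-x_2)^2+x_2$, your pair $(t,t)$, $(t+\sqrt t,t)$ is simpler than the paper's $(k+1,k-1)$, $(k-\sqrt k,k+\sqrt k)$.
\item For $(x_1-x_2)^2$, you use that the full specialisation vanishes on the diagonal without being identically zero. This does violate the definition as written. The paper's pair instead shows the inequality fails even at points where the function is positive, so the failure is not merely a positivity technicality.
\item For membership of $(x_1-x_2)^2+x_1^2$, the paper appeals to the second bullet of Corollary~\ref{cor:classes} with $f=2x_1^2+x_2^2$ and $g=2x_1x_2$. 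Your positive-definite-form argument is a self-contained alternative. It has the advantage of not needing $g=2x_1x_2$ to satisfy \eqref{eq:asymp_Lip} on $\mathbb{R}^2_{>0}$, which actually fails with the norm ratio as stated (take $x=(\epsilon,1)$, $y=(2\epsilon,1)$).
\end{itemize}
One small correction in your chain: the inequality you actually need is $\|x+y\|\le\|x\|+\|y\|$, which yields $C=2C'/c$. The bound $\|x+y\|\ge(\|x\|+\|y\|)/\sqrt2$ points the wrong way and is not needed.
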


\begin{proof}
The fact that $2+\sin(x)$ is  not relative Lipschitz is straightforward --- for example, large enough $x=2\pi n+\pi/2$ and $y=2\pi n-\pi/2$ violate condition~\eqref{eq:asymp_Lip}.

Let $f:\mathbb{R}_{>0}\to\mathbb{R}_{>0}$ be any Lipschitz function such that, for any positive integer $n$,
$$
f(1/n)=n^{-2}\mathbbm{1}_{n\text{ is even}}+2n^{-2}\mathbbm{1}_{n\text{ is odd}}.
$$
Indeed, this function on $\{1/n, n\in\mathbb{Z}_{>0}\}$ has a Lipschitz extension since for positive integers $k_1,k_2$ of different parity,
$$
|f(1/k_1)-f(1/k_2)|=|k_1^{-2}-2k_2^{-2}|=\frac{|2k_2^2-k_1^2|}{k_2^2k_1^2}\stackrel{(*)}\leq 4\frac{|k_2-k_1|}{k_1k_2}=4|k_1^{-1}-k_2^{-1}|.
$$
The inequality (*) is fairly easy to check: if $k_2>k_1$, then 
$$
|2k_2^2-k_1^2|=2k_2^2-k_1^2\leq 2k_2^2\leq 4k_2(k_2-1)\leq 4k_2k_1(k_2-k_1)=4k_1k_2|k_2-k_1|.
$$
If $k_2<k_1$, then
$$
|2k_2^2-k_1^2|\leq k_1^2-2\leq 4k_1(k_1-1)\leq 4k_1k_2(k_1-k_2)=4k_1k_2|k_2-k_1|.
$$

However, $f$ is not relative Lipschitz since
$$
 \frac{|f(1/(2k))-f(1/(2k+1))|}{f(1/(2k))+f(1/(2k+1))}=\frac{|(2k)^{-2}-2(2k+1)^{-2}|}{(2k)^{-2}+2(2k+1)^{-2}}>\frac{1}{3}\left(2\left(\frac{2k}{2k+1}\right)^2-1\right)>1/4,
$$
while
$$
 \frac{|1/(2k)-1/(2k+1)|}{1/(2k)+1/(2k+1)}<\frac{|1/(2k)-1/(2k+1)|}{1/(2k)}=1-\left(\frac{2k}{2k+1}\right)-1=\frac{1}{2k+1}\to 0,\quad k\to\infty,
$$
which contradicts the definition of a relative Lipschitz function.

Let us now switch to polynomials. The fact that $(x_1-x_2)^2+x_1^2\in \functionclassrellip$ follows from the second  bullet point in Corollary~\ref{cor:classes}, since we can take $f=2x_1^2+x_2^2$, $g=2x_1x_2$, and $\varepsilon=0.1$. In order to prove that $f=(x_1-x_2)^2\notin\functionclassrellip$ and $g=(x_1-x_2)^2+x_2\notin\functionclassrellip$, let $k$ be large enough,  $x_1=k+1$, $x_2=k-1$,
$y_1=k-\sqrt{k}$, and $y_2=k+\sqrt{k}$. On the one hand,
$$
 \frac{\|x-y\|}{\|x+y\|}=\frac{\sqrt{k}(1+o_k(1))}{2k(1+o_k(1))}=o_k(1).
$$
However,
$$
 \frac{|f(x)-f(y)|}{f(x)+f(y)}=\frac{4k-4}{4+4k}=\frac{k-1}{k+1}=1-o_k(1),\quad\text{ and }
$$
$$
 \frac{|g(x)-g(y)|}{g(x)+g(y)}= \frac{5k+\sqrt{k}-(k+3)}{k+3+4k+k+\sqrt{k}}=\frac{2}{3}-o_k(1),
$$
contradicting~\eqref{eq:asymp_Lip}.

\end{proof}

All functions in Corollary~\ref{cor:classes} are bounded from above by a polynomial function for large enough $x$ and by an inverse polynomial for $x$ close to 0. 
  Actually, every relative Lipschitz function is uniformly bounded by a power function --- the next assertion is one of the crucial ingridients in the proofs of our main results.

\begin{claim}
\label{cl:poly-bound}
For every relative Lipschitz function $f:\mathbb{R}^m_{>0}\to\mathbb{R}_{>0}$, there exist positive $k$ and $C$ such that for all $x\in\mathbb{R}_{>0}^m$, 
$$
f(x)<C+x_1^k+x_1^{-k}+\ldots+x_m^k+x_m^{-k}.
$$
\end{claim}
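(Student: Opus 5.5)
The plan is to turn the relative Lipschitz condition into a Lipschitz bound for $\ln f$ with respect to the logarithmic coordinates $t_i=\ln x_i$. Integrating that bound along paths from the base point $(1,\ldots,1)$ then gives the polynomial/inverse-polynomial envelope.

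\emph{Step 1 (one-coordinate multiplicative step).} Fix $x\in\mathbb{R}^m_{>0}$ and $y$ that differs from $x$ only in coordinate $i$, with $y_i=\lambda x_i$ and $\lambda\in[1,2]$. Then
$$\|x-y\|/\|x+y\|\leq (\lambda-1)x_i/\big((1+\lambda)x_i\big)\leq (\lambda-1)/2 .$$
Write $r=f(y)/f(x)$. The relative Lipschitz condition~\eqref{eq:asymp_Lip} gives $|r-1|/(r+1)\leq C(\lambda-1)/2$. The left side is $\tanh(|\ln r|/2)$, so as long as $C(\lambda-1)/2\leq 1/2$ we can invert this. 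That yields $|\ln r|\leq 2\,\mathrm{artanh}(C(\lambda-1)/2)\leq C'(\lambda-1)\leq C''\ln\lambda$ for some constants $C',C''$ depending only on $C$.

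Now fix $\lambda_0>1$ small enough that $C(\lambda_0-1)/2\leq 1/2$. Then any single-coordinate multiplication or division by a factor in $[1,\lambda_0]$ changes $\ln f$ by at most $A\ln\lambda$, where $A$ depends only on $C$. Note that nothing here uses bounds on $f$ other than positivity. We do not need Claim~\ref{cl:continuous}.

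\emph{Step 2 (chaining).} Go from $(1,\ldots,1)$ to $x$ one coordinate at a time. Coordinate $i$ moves from $1$ to $x_i$ through $N_i=\lceil |\ln x_i|/\ln\lambda_0\rceil$ equal multiplicative steps, each of ratio at most $\lambda_0$. Summing the Step~1 bounds along the path gives
$$|\ln f(x)-\ln f(1,\ldots,1)|\leq A\sum_i |\ln x_i| .$$
Hence
$$f(x)\leq f(\mathbf 1)\prod_i \max(x_i,x_i^{-1})^{A}.$$

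\emph{Step 3 (from product to sum).} Set $M=\max_i \max(x_i,x_i^{-1})\geq 1$. The product is at most $M^{mA}$, and $M^{mA}\leq\sum_i (x_i^{mA}+x_i^{-mA})$. Multiplying by $f(\mathbf 1)$ changes the exponent only slightly: if $f(\mathbf 1)\leq 1$ nothing is needed, and otherwise $f(\mathbf 1)M^{mA}\leq C_0+M^{mA+1}$ for a suitable constant $C_0$. Hence $k=mA+1$ works, and we take $C=\max(f(\mathbf 1),1)+1$ to get strict inequality.

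The main obstacle is Step~1: the inequality has to be inverted carefully near ratio $1$, and the constant must be uniform in $x$. This works because the bound on $\|x-y\|/\|x+y\|$ is scale-free, which comes from the $x_i$ in the denominator.
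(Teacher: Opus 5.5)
Your argument is correct, up to one slip in the final constant noted below, and it takes a genuinely different route from the paper.

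\textbf{The paper's route.} The paper argues by contradiction. It first shows, by chaining along a geodesic on the unit sphere, that $f$ varies there by at most a bounded factor. Then, along the ray $t\mapsto t\alpha$ through a hypothetical violating point, it uses continuity of $f$ (Claim~\ref{cl:continuous}) to locate a point $x$ where $f$ meets the envelope $g^{+}=C'+\sum_i x_i^k$ (or $g^{-}$) from below. It compares $x$ with $y=x(1-1/k)$ (or $x(1+1/k)$). The envelope changes by a constant fraction, while $\|x-y\|/\|x+y\|=1/(2k-1)$ (or $1/(2k+1)$), so the relative Lipschitz condition fails once $k\gg C$.

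\textbf{Your route.} You use the same scale invariance of $\|x-y\|/\|x+y\|$, but with single-coordinate multiplicative steps and in a direct, constructive form: $\ln f$ is $A$-Lipschitz in the variables $\ln x_i$ (for the $\ell_1$ norm), with $A=O(C)$. This gives an explicit two-sided bound
$f(\mathbf 1)\prod_i\max(x_i,x_i^{-1})^{-A}\le f(x)\le f(\mathbf 1)\prod_i\max(x_i,x_i^{-1})^{A}$.
It needs neither continuity, nor the estimate on the sphere, nor a first-crossing argument. In particular, you never have to control the additive constant $C'$ of $g^{\pm}$ at the crossing point, where $f(x)=C'+t^k\sum_i\alpha_i^k$. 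The price is an exponent $k\approx mA$ that grows with $m$, whereas the radial argument aims at an exponent depending only on $C$. This is irrelevant for the claim as stated.

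\textbf{The slip.} Your choice $C=\max(f(\mathbf 1),1)+1$ does not follow from your own case analysis in Step~3. If $f(\mathbf 1)>1$ and $M<f(\mathbf 1)$, you only know $f(x)\le f(\mathbf 1)M^{mA}<f(\mathbf 1)^{mA+1}$. For instance, $f(\mathbf 1)=10$, $mA=1$, $M=5$ gives $f(\mathbf 1)M^{mA}=50$, while $C+M^{k}=11+25=36$. Take instead $C=\max(f(\mathbf 1),1)^{mA+1}+1$, which is your $C_0$ plus one. Then $f(x)<C+M^{k}\le C+\sum_i(x_i^{k}+x_i^{-k})$ with $k=mA+1$ in all cases, and the rest of the argument stands as written.
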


\begin{proof}
Let $k,C'\gg C$ be large enough, where $C>0$ is the constant witnessing that $f$ is relative Lipschitz. Let 
$$
g^+=C'+x_1^k+\ldots+x_m^k,\quad
g^-=C'+x_1^{-k}+\ldots+x_m^{-k}.
$$
Assume towards contradiction that there exists $x^*\in\mathbb{R}_{>0}^m$ such that $f(x^*)\geq\max\{g^+(x^*),g^-(x^*)\}$.

Let $S$ be the unit sphere in $\mathbb{R}^m_{>0}$, and let $s=(1/\sqrt{m},\ldots,1/\sqrt{m})\in S$. Let us show that $x^*\notin S$ (and actually far away from $S$). Indeed, assume the opposite, and let $y\neq x\in S$. Divide the geodesic in $\mathbb{R}^m$ between $x^*$ and $y$ into $4C+1$ parts via $y_1,\ldots,y_{4C}$. Then the distance between any two consecutive points is at most $1/{2C}$, implying $|f(y_i)-f(y_{i-1})|\leq\frac{1}{2}(f(y_i)+f(y_{i-1}))$. Therefore, $f(y_i)\leq 3f(y_{i-1})$, implying the universal upper bound $f(x^*)\leq 3^{4C+1}f(y)$.

Assume first $\|x^*\|>1$. In this case we may also assume that $f(x^*)\geq g^+(x^*)$. Let $\alpha\in S$ be such that $x^*=\alpha\cdot t^*$, where $t^*>1$. Let $\mathcal{T}\subset(1,\infty)$ be the set of all $t$ such that $f(\alpha\cdot t)\geq g^+(\alpha\cdot t)$. The set $\mathcal{T}$ is closed since $f-g^+$ is continuous due to Claim~\ref{cl:continuous}. Therefore, we may assume that there exists $x=\alpha\cdot t\in\alpha\cdot\mathcal{T}$ such that $f(x)=g^+(x)$ and $f(y)<g^+(y)<g^+(x)$ for $y=x(1-1/k)$ (recall that we can choose $k$ as large as we need). Then,
$$
 f(x)-f(y)\geq g^+(x)-g^+(y)=\sum_{i=1}^m\left((\alpha_i t)^k-(\alpha_i t(1-1/k))^k\right)\geq (1-1/e) t^k\sum_{i=1}^m\alpha_i^k;
$$
$$
 f(x)=g^+(x)=\sum_{i=1}^m(\alpha_i t)^k=t^k\sum_{i=1}^m\alpha_i^k; \quad\text{ and }\quad
 \frac{\|x-y\|}{\|x+y\|}=\frac{\|x/k\|}{\|x(2-1/k)\|}=\frac{1}{2k-1}.
$$
We get
$$
 \frac{|f(x)-f(y)|}{f(x)+f(y)}\geq\frac{f(x)-f(y)}{2f(x)}\geq \frac{1-1/e}{2}=(1-1/e)\cdot\frac{2k-1}{2}\cdot\frac{\|x-y\|}{\|x+y\|}
$$
--- a contradiction since $k$ can be chosen arbitrarily large. In particular, $k>3C$.

Now, let $\|x^*\|<1$ and $f(x^*)\geq g^-(x^*)$. As above, let $\alpha\in S$ be such that $x^*=\alpha\cdot t^*$, where $t^*<1$. We may find $x=\alpha\cdot t$ such that $f(x)=g^-(x)$ and $f(y)<g^-(y)<g^-(x)$, where $y=x(1+1/k)$. Then,
$$
 f(x)-f(y)\geq g^-(x)-g^-(y)=\sum_{i=1}^m\left((\alpha_i t)^{-k}-(\alpha_i t(1+1/k))^{-k}\right)\geq (1-2/e) t^{-k}\sum_{i=1}^m\alpha_i^{-k};
$$
$$
 f(x)=g^-(x)=\sum_{i=1}^m(\alpha_i t)^{-k}=t^{-k}\sum_{i=1}^m\alpha_i^{-k}; \quad\text{ and }\quad
 \frac{\|x-y\|}{\|x+y\|}=\frac{\|x/k\|}{\|x(2+1/k)\|}=\frac{1}{2k+1}.
$$
As above,
$$
 \frac{|f(x)-f(y)|}{f(x)+f(y)}\geq \frac{f(x)-f(y)}{2f(x)}\geq\frac{1-2/e}{2}=(1-2/e)\cdot\frac{2k+1}{2}\cdot\frac{\|x-y\|}{\|x+y\|}
$$
--- a contradiction again, completing the proof.
\end{proof}

Even though asymptotic behaviour of a relative Lipschitz functions as $x\to 0$ or $x\to\infty$, is bounded by a polynomial, it may differ from the behaviour of any power function. For instance, the function $f(x)=\ln(1+x)$ is relative Lipschitz on $(0,\infty)$. Moreover, the speed of decay of a relative Lipschitz function may be significantly higher when the limit of $f(x)$ differs from $0$ and $\infty$. For example, the sigmoid function $f(x)=(1+e^{-x})^{-1}$ is relative Lipschitz on $[0,\infty).$ More generally, it is easy to see that every differentiable $f:\mathbb{R}_{>0}\to\mathbb{R}_{>0}$ such that $xf'(x)<Cf(x)$ for some $C>0$ and all $x>0$ is relative Lipschitz (see Appendix~\ref{app:rellip}).

\paragraph{Asymptotically polynomial functions.} There exists a term $\tau\in\Agglang_{\functionclassrellip}$ such that $\mathbb{E}\tau$ is bounded but does not converge as $n\to\infty$. This follows from the existence of a bounded $f:\mathbb{R}_{\geq 0}\to\mathbb{R}_{\geq 0}\in\functionclassrellip$ such that $\lim_{x\to\infty}f(x)$ does not exist. For example, $f(x)=2+\sin(\ln(2+x))$ is such a function --- see Appendix~\ref{app:no-limit}. In order to restrict the family of connectives to ``asymptotically consistent'' functions, we introduce the following definition.

\begin{definition}[Asymptotically Polynomial] \label{def:asymptpoly}
A function $f:\mathbb{R}^m_{>0}\to\mathbb{R}_{>0}$ is {\it asymptotically polynomial}, if, for any positive constants $c_1,\ldots,c_m$, real constants $\gamma_1,\ldots,\gamma_m$, and sequence 
$$
a_k=((c_1+o(1))k^{\gamma_1},\ldots,(c_m+o(1))k^{\gamma_m}),
$$
there exist $c>0$ and $\gamma\in\mathbb{R}$ such that 
$$
f(a_k)=(c+o(1))k^{\gamma}.
$$

A function $f:\mathbb{R}^m_{\geq 0}\to\mathbb{R}_{\geq 0}$ is {\it asymptotically polynomial}, if every  specialisation is either positive-valued and asymptotically polynomial, or identically equal to zero.

Let $\functionclasspoly$ be the set of asymptotically polynomial relative Lipschitz functions $f:\mathbb{R}^m_{\geq 0}\to\mathbb{R}_{\geq 0}$ over all integers $m\geq 1$.
\end{definition}

We notice that all functions from Corollary~\ref{cor:classes} are asymptotically polynomial. The sigmoid function is asymptotically polynomial and relative Lipschitz as well. 
Actually, none of the classes $\functionclasspoly$, $\functionclassrellip$, $\Lipsch$ is contained in any of the others. From Claim~\ref{cl:counterexamples}, we know that this is the case for the classes $\functionclassrellip$ and $\Lipsch$. The remaining non-inclusions are evident from the following examples:

\begin{itemize}

\item An asymptotically polynomial function is not necessarily continuous --- therefore, not every asymptotically polynomial function is Lipschitz. 

\item On the other hand, all polynomial functions are asymptotically polynomial but not every polynomial function $f:\mathbb{R}^m_{\geq 0}\to\mathbb{R}_{\geq 0}$ is relative Lipschitz, as follows from Claim~\ref{cl:counterexamples}. 

\item $\ln(1+x)$ is relative Lipschitz but not asymptotically polynomial.

\end{itemize}

\section{Dense \erdosrenyi \ graphs}
\label{sec:dense-all}

Let $p=\mathrm{const}\in(0,1)$ and let $\mathbf{G}_n\sim G(n,p)$.
Recall the following concentration result for the language with average-based aggregates from \cite{uslics}. 

\begin{fact}[\cite{uslics}] For every closed term $\tau$ from the language  $\Aggo{\Lipsch, \Mean, \LMean, \supl}$, its evaluation $[\![\tau]\!]_{\mathbf{G}_n}$
converges in probability to a constant.
\end{fact}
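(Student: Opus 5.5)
The plan is an induction on the structure of $\tau$, proving a stronger statement for terms with free variables. For a term $\tau(\overline{u})$ with $k$ free variables and a fixed equality/edge pattern of the tuple $\overline{u}$ (an MRG on the roots), I would show that there is a constant $c_\tau$ such that
\[
\max_{\overline{u}} \bigl|[\![\tau]\!]_{\mathbf{G}_n[\overline{u}]}-c_\tau\bigr|\stackrel{P}{\to}0 ,
\]
where the maximum runs over all tuples realising that pattern. This uniformity over $\overline{u}$ is what lets the $\supl$ aggregate go through. Specialised to closed terms, it gives convergence to a constant.

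I would organise the induction around \emph{extension statements}, in the style of the FO 0-1 law for constant $p$. With high probability, for each fixed $r$, every $r$-tuple of vertices satisfies the following. For every extension type (edges from a new vertex $v$ to the roots), the number of vertices $v$ realising that type is $(1+o(1))\,n\,p^{a}(1-p)^{r-a}$, where $a$ is the number of roots adjacent to $v$. This follows from Chernoff bounds and a union bound over the $n^r$ tuples.

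The inductive steps are then as follows.
\begin{itemize}
\item \emph{Atomic terms.} These are determined by the pattern of $\overline{u}$, so there is nothing to prove.
\item \emph{Connectives $f\in\Lipsch$.} These preserve uniform convergence, since $|f(x)-f(y)|\le L\|x-y\|$.
\item \emph{Global average $\Mean_v\tau(\overline{u},v)$.} Split the vertices $v$ according to their type over $\overline{u}$, including $v\in\overline{u}$, which contributes $O(k/n)$. By the inductive hypothesis, applied uniformly to the extended tuple $(\overline{u},v)$, the term $\tau$ is within $\varepsilon$ of the constant $c_{\tau,t}$ for each type $t$. The average is therefore within $\varepsilon+o(1)$ of $\sum_t p^{a_t}(1-p)^{k-a_t}c_{\tau,t}$.
\item \emph{Local average $\LMean_{u_1Ev}$.} This is handled the same way, restricting to types with $v$ adjacent to $u_1$ and normalising by $\deg u_1=(1+o(1))np$.
\item \emph{Maximum $\supl_v$.} With high probability every type is realised by some vertex, because the extension counts are positive. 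Hence the maximum is within $\varepsilon$ of $\max_t c_{\tau,t}$, where $t$ ranges over all types, including the types $v=u_i$.
\end{itemize}

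The main obstacle is keeping the inductive hypothesis uniform over all tuples with high probability, so that errors do not blow up as nested aggregates are stacked. I would handle this by proving at each step a quantitative version: for each $\varepsilon>0$, the probability that some tuple deviates by more than $\varepsilon$ is $o(1)$.

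Bounding the averaged terms requires boundedness of $\tau$. I would first check by induction that every term is a.s. bounded by a constant depending only on $\tau$, which holds since Lipschitz images of bounded quantities are bounded and aggregates preserve bounds.

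Finally, the number of types is finite (at most $2^k+k$), so the errors add up only finitely many times.
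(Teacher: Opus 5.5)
Your proposal is correct. The paper does not reprove this Fact (it is quoted from \cite{uslics}), but your argument follows the template of the paper's own proofs, namely Lemma~\ref{lem:dense_main} together with Lemma~\ref{lem:sparse_avg_inductive}, which proves this very statement in the sparse regime with closure types in place of atomic types: induction on terms with one constant per atomic type, extension counts as in Lemma~\ref{lem:extensionsstabilizedense}, and the same splitting by types for $\Mean$, $\LMean$ and $\supl$. The only difference is cosmetic: you carry uniformity over all tuples with $o(1)$ failure probability, whereas the paper proves per-tuple bounds with failure probability $n^{-\omega(1)}$ and union-bounds inside each aggregate.
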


\subsection{Main results}

Our first main result asserts that every closed term $\tau$ in $\Agglang_{\functionclassrellip}$ has $\tau=[\![\tau]\!]_{\mathbf{G}_n}$ concentrated around its mean. 

\begin{theorem} \label{th:dense-concentration}
For every closed term $\tau$ in $\Agglang_{\functionclassrellip}$ either
\begin{compactitem}
\item  $\whp$ $\tau=0$, and $\tau\stackrel{L_p}\to 0$ for every $p>0$ ($\tau$ is typically zero), or
\item $\mathbb{E}\tau>0$, $\whp$ $\tau>0$, and, for every $p>0$,
 $\tau/\mathbb{E} \tau\stackrel{L_p}\to 1$ as $n\to\infty$.
\end{compactitem}

\end{theorem}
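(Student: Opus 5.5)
We argue by structural induction on terms. Closed terms alone are too weak an induction hypothesis, so we strengthen it to open terms $\tau(\overline{u})$ with $k$ free variables, uniformly over tuples. Since $G(n,p)$ is invariant under vertex permutations, the law of $[\![\tau]\!]_{\mathbf{G}_n[\overline{u}]}$ depends only on the equality type of $\overline{u}$. It is then natural to condition on the induced graph $\mathbf{G}_n\upharpoonright\overline{u}$, i.e.\ on the isomorphism type of the rooted graph on the roots.

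The strengthened hypothesis is as follows. For every term $\tau(\overline{u})$ and every rooted pattern $R$ on $k$ vertices (equality type together with induced edges), there are $\gamma=\gamma(\tau,R)\in\mathbb{Z}$, a constant $b=b(\tau)$, and a deterministic sequence $\varphi_n=\varphi_n(\tau,R)$ with the following properties. Either $\varphi_n=0$, or $n^{\gamma}(\ln n)^{-b}\le\varphi_n\le n^{\gamma}(\ln n)^{b}$. Moreover,
\[
\mathbb{P}\Bigl(\exists\,\overline{u}\text{ of pattern }R:\ \bigl|[\![\tau]\!]_{\mathbf{G}_n[\overline{u}]}-\varphi_n\bigr|>\delta_n\varphi_n\ \text{ or }\ \bigl(\varphi_n=0\text{ and }[\![\tau]\!]\ne0\bigr)\Bigr)\le n^{-\omega(1)},
\]
where $\delta_n=n^{-1/2}(\ln n)^{b}$. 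Finally, we record the deterministic bound $[\![\tau]\!]\in\{0\}\cup[n^{-c},n^{c}]$ for every graph, with $c=c(\tau)$.

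The deterministic bound follows by induction. For connectives we use Claim~\ref{cl:poly-bound} applied to $f$ and, for nonvanishing specialisations, to $1/f_I$, which is relative Lipschitz by Claim~\ref{cl:all-powers}. Summation multiplies by at most $n$, and $\Mean$, $\LMean$ divide by at most $n$. The proof of Theorem~\ref{cl:function-concentration} gives exactly this dichotomy.

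Given the hypothesis, the theorem follows. The $L_p$ convergence of $\tau/\mathbb{E}\tau$ comes from the $1-n^{-\omega(1)}$ concentration combined with the polynomial deterministic bounds: the bad event contributes at most $n^{O(1)}\cdot n^{-\omega(1)}$ to every moment. In particular $\mathbb{E}\tau=(1+o(1))\varphi_n$ in the nonzero case, and $\mathbb{E}\tau^p\to0$ in the zero case.

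The induction runs over the term constructors.
\begin{itemize}
\item Atoms: given the pattern $R$, these are deterministic.
\item Connectives: apply Theorem~\ref{cl:function-concentration} with $K=n^{c}$, setting $\delta=n^{-\omega(1)}$ and $\delta'=\delta_n$. Relative error is multiplied by the constant $a_f$. The order $\varphi=f(\varphi_1,\ldots)$ stays polynomially bounded, although it need not be exactly $n^\gamma$. We therefore relax the hypothesis to ``$\varphi_n\in[n^{-c},n^{c}]$ or $0$''.
\item $\supl_v$ and $\infl_v$: a new vertex $v$ has one of finitely many patterns relative to $\overline{u}$, namely equal to some $u_i$ or distinct with a given neighbourhood in $\overline{u}$. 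For each pattern $R'$ extending $R$ with distinct $v$, we have $n-k\ge1$ such $v$ w.h.p. (all such patterns occur, since $p$ is constant and a Chernoff bound applies). Since $\max$ and $\min$ commute with uniform $(1\pm\delta_n)$ approximation, $\supl_v\tau$ is within relative error $\delta_n$ of $\max_{R'}\varphi_n(\tau,R')$. This is where uniformity over all tuples, a union bound over $n^{k+1}$ tuples, is essential; it is affordable because the failure probability is superpolynomially small.
\end{itemize}

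The main obstacle is summation, together with $\Mean$ and $\LMean$, which are summation divided by $n$ or by a degree. Here $\Sigma_v\tau(\overline{u},v)=\sum_{R'}\sum_{v\text{ of type }R'}[\![\tau]\!]_{\mathbf{G}_n[\overline{u},v]}$, and each summand is $(1\pm\delta_n)\varphi_n(\tau,R')$ uniformly w.h.p. So the sum is $(1\pm\delta_n)\sum_{R'}N_{R'}(\overline{u})\varphi_n(\tau,R')$, where $N_{R'}(\overline{u})$ is the number of $v$ with neighbourhood pattern $R'$ on $\overline{u}$. This count is $\mathrm{Bin}(n-k,p^{j}(1-p)^{k-j})$. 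By Chernoff it is $(1\pm n^{-1/2}\ln n)(n-k)p^j(1-p)^{k-j}$ with probability $1-n^{-\omega(1)}$, uniformly over $\overline{u}$. The $\varphi$'s are nonnegative, so relative errors do not accumulate across patterns: a nonnegative combination of $(1\pm\delta)$-approximations is again a $(1\pm\delta)$-approximation. We set $\varphi_n(\Sigma_v\tau,R)=\sum_{R'}(n-k)p^{j}(1-p)^{k-j}\varphi_n(\tau,R')$. For $\LMean$ we divide by the degree, which is concentrated in the same way. The delicate point is that the inner summands are conditioned on the roots' pattern only, yet they are being combined with counts on the same graph. This is fine because the induction gives uniform deterministic approximations on a high-probability event, not independent fluctuations. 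The error parameter only grows by constant factors and additive $n^{-1/2}\ln n$ terms, and the term depth is fixed, so $\delta_n=n^{-1/2}(\ln n)^{b}$ with $b$ depending on $\tau$ suffices.
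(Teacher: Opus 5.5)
Your proposal is correct and follows essentially the same route as the paper. Lemma~\ref{lem:dense_main} is your strengthened hypothesis over atomic types, with connectives via Theorem~\ref{cl:function-concentration}, $\supl$ by maximising over extension types, summation by counting one-vertex extensions of each type (your binomial Chernoff bound replaces the bounded-difference argument of Lemma~\ref{lem:extensionsstabilizedense}), and the same polynomial-bound argument for the $L_p$ conclusion. One small slip: your formula for $\varphi_n(\Sigma_v\tau,R)$ omits the diagonal terms $\sum_{i}\varphi_n(\tau,R_i)$ from $v=u_i$, which the paper includes in~\eqref{eq:phi-sum} and which can dominate (e.g.\ $\Sigma_v\, g(\mathbbm{1}_{u_1=v},\Sigma_w 1)$ with $g(x,y)=xy^2$).
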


Our second main result states that, if we further restrict the set of connective functions to $\functionclasspoly$, then $\mathbb{E}\tau=(c+o(1))n^{\gamma}$.

\begin{theorem} \label{th:dense-limits}
For every closed term $\tau$ in $\Agglang_{\functionclasspoly}$ either
\begin{compactitem}
\item  $\whp$ $\tau=0$, and $\tau\stackrel{L_p}\to 0$ for every $p>0$, or
\item there exist $c>0$ and $\gamma\in\mathbb{R}$ such that $\mathbb{E}\tau=(c+o(1))n^{\gamma}$. 
\end{compactitem}

\end{theorem}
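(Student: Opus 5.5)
We prove Theorem~\ref{th:dense-limits} by structural induction on terms, using a strengthened hypothesis that also covers open terms. For a term $\tau(\overline{u})$ with $k$ free variables and a type $t$ (the isomorphism type of $\mathbf{G}_n\upharpoonright\overline{u}$ together with the equality pattern of $\overline{u}$), we claim the following. Either $\whp$ $\tau=0$ simultaneously for all tuples $\overline{u}$ of type $t$, or there exist $c_t>0$ and $\gamma_t\in\mathbb{R}$ such that, for every fixed $\varepsilon>0$, $\whp$ all tuples of type $t$ satisfy $|\tau(\overline{u})-c_tn^{\gamma_t}|\le\varepsilon c_tn^{\gamma_t}$. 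In addition, there is a deterministic bound $\tau\in\{0\}\cup[n^{-b},n^{b}]$ for some constant $b$.

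The deterministic bound follows from Claim~\ref{cl:poly-bound} applied to the connectives. Atomic terms have $\gamma=0$. For terms of the form $f(\tau_1,\ldots,\tau_m)$, we apply Theorem~\ref{cl:function-concentration} with $K=n^{b}$ and $\varphi_i=c_in^{\gamma_i}$, together with a union bound over the $n^k$ tuples. This gives concentration of $f(\tau)$ around $f(\varphi)$. Asymptotic polynomiality of the relevant specialisation $f_I$ then gives $f(\varphi)=(c+o(1))n^{\gamma}$. We need uniform relative error $o(1)$ rather than fixed $\varepsilon$, so we run the argument with $\varepsilon=\varepsilon_n\to0$ slowly, which the $\whp$ formulation permits.

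For aggregates over $v$, we use the dense extension property. For $p$ constant, $\whp$ every $k$-tuple $\overline{u}$ has, for each extension type $t'$ of $t$, exactly $(p_{t'}+o(1))n$ vertices $v$ realising it, where $p_{t'}$ is a product of factors $p$ and $1-p$ determined by the adjacency pattern. There are also $O(1)$ vertices $v\in\overline{u}$, whose types are handled separately. Under this property, the remaining aggregates go as follows:
\begin{itemize}
\item $\Sigma_v\tau$ becomes $\sum_{t'}(p_{t'}+o(1))n\cdot c_{t'}n^{\gamma_{t'}}$ plus $O(1)$ boundary terms. The leading exponent is $\max(1+\gamma_{t'})$, compared against the boundary terms, and its coefficient is a positive sum of constants.
\item $\max_v$ and $\min_v$ pick the largest or smallest $c_{t'}n^{\gamma_{t'}}$ among the realised types; every type is realised $\whp$.
\item $\Mean_v$ and $\LMean_{wEv}$ are analogous normalised sums; the neighbourhood size is $(p+o(1))n$.
\end{itemize}
Types on which the value is typically zero contribute nothing. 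Passing from the $\whp$ statement to $\mathbb{E}\tau$ is where the real difficulty lies. On the failure event we use only the crude bound $n^{b}$, so we need that event to have probability $o(n^{\gamma-b})$, that is, superpolynomially small.

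The main obstacle is obtaining these superpolynomially small failure probabilities uniformly through the induction. Chernoff bounds make the extension counts fail with probability $e^{-\Omega(n)}$. The inductive concentration, however, has to be propagated with explicit tails. I would therefore strengthen the hypothesis so that the failure probability is at most $n^{-\omega(1)}$ and the relative error is $\varepsilon_n=n^{-\delta}$ for some $\delta>0$ depending on the term. Each connective step multiplies $\delta'$ by $a_f$; we absorb this into a slightly smaller exponent and keep track of it through the depth of the term. With the strengthened hypothesis, for a closed term $\mathbb{E}\tau=(1+o(1))cn^{\gamma}+O(n^{b}\cdot n^{-\omega(1)})=(c+o(1))n^{\gamma}$. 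In the typically-zero case, $\mathbb{E}|\tau|^p\le n^{pb}n^{-\omega(1)}\to0$, which gives $\tau\stackrel{L_p}\to0$ and completes the dichotomy.
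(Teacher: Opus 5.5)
Your proposal is correct and is essentially the paper's proof. The paper's Lemma~\ref{lem:dense_main} is precisely your strengthened hypothesis over atomic types: deterministic bounds $n^{\pm K}$, failure probability $n^{-\omega(1)}$, relative error $\lambda\ln n/\sqrt{n}$, proved by Theorem~\ref{cl:function-concentration} for connectives and the extension counts of Lemma~\ref{lem:extensionsstabilizedense} for $\Sigma$ and $\supl$, then finished with the same sandwich bound for $\mathbb{E}\tau$. One repair is needed: concentrate around a deterministic centre $\varphi_n(\mathbf{t})$ and carry $\varphi_n(\mathbf{t})=(c+o(1))n^{\gamma}$ as a separate clause, as the paper does. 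Asymptotic polynomiality gives no rate: for example, $f(x)=1+1/\ln(2+x)$ applied to $\Sigma_v 1$ stays at relative distance $1/\ln(2+n)$ from $n^{0}$. So relative error $n^{-\delta}$ around $c_tn^{\gamma_t}$ itself cannot be propagated through the induction, although the final expectation bound only needs an $o(1)$ error anyway.
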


In particular, if $\sup_n\mathbb{E}\tau<\infty$, then there exists $\lim_{n\to\infty}\mathbb{E}\tau=:c$ and $\tau\stackrel{P}\to c$ as $n\to\infty$. More precisely, if $\gamma=0$, we get that $\tau\stackrel{P}\to c$, and if $\gamma>0$, then $\tau$ diverges but concentrates.

\begin{remark}
\label{rk:no-inf}
Analogously to the treatment of the existential quantifier in first-order logic, $\infl$ can be removed from the language without any loss of expressive power. Indeed, let $f\in\functionclasspoly$ be a unary function defined as $f(0)=1$ and $f(x)=0$ for all $x>0$. Let $g\in\functionclasspoly$ be the inverse function: $g(x)=1/x$ if $x>0$ and $g(0)=0$. Finally, let $P(x_1,x_2)=x_1x_2\in\functionclasspoly$. Then 
$\infl_u\tau(u)=P(f(\supl_u f(\tau(u))), g(\supl_u g(\tau(u))))$.
\end{remark}

\begin{remark}
All of the summation-based languages in the paper, in particular $\Agglang_{\functionclasspoly}$, can express
the term $\sum_v 1$ ---  the total number of vertices $n$ --- ignoring the graph structure. We can then compute the
output of connective functions applied
to $n$. 
\end{remark}

\begin{remark}
We restricted the connective functions to output non-negative values and to be relative Lipschitz. We now argue that these restrictions are necessary for the validity of Theorem~\ref{th:dense-concentration}. 
 Let us support this statement by the following two negative examples.

First, let $\tau_1$ be the difference between the number of edges and $\frac{n^2}{2}$, which can be expressed in $\Agglang_{\functionclassrellip\cup\{f_1(x_1,x_2)=x_1-x_2\}}$ since the function $g(x)=\frac{x^2}{2}$ is relative Lipschitz. Then $\mathbb{E}|\tau_1|=\Theta(n)$ and $\tau_1/n$ coverges to a normal random variable in distribution, thus $\tau_1$ does not concentrate.

It is easy to make the previous example non-negative by defining $f_2(x_1,x_2)=(x_1-x_2)^2$ and $\tau_2=\tau_1^2\in\Agglang_{\functionclassrellip\cup\{f_2\}}$. We still get that $\mathbb{E}\tau_2=\Theta(n^2)$ and $\tau_2/n^2$ has a non-degenerate asymptotic distribution. It follows that the function $f_2$ does not belong to $\functionclassrellip$, which is also established in Claim~\ref{cl:counterexamples}.\footnote{It may seem that to guarantee $f\in \functionclassrellip$ for a polynomial $f$, it would be sufficient to rule out non-trivial roots (and it is actually the case for univariate polynomials --- every polynomial $f:\mathbb{R}_{>0}\to\mathbb{R}_{>0}$ belongs to $\functionclassrellip$). However, Claim~\ref{cl:counterexamples} gives another counterexample $f_3(x_1,x_2)=(x_1-x_2)^2+x_2$ that has a unique root $(0,0)$. In particular, the term $(|E(G)|-n^2/2)^2+n^2/2$ belongs to $\Agglang_{\functionclassrellip\cup\{f_3\}}$ and does not satisty the conclusion of Theorem~\ref{th:dense-concentration}.}
\end{remark}

\paragraph{Applications.}
Our results imply concentration of subgraph counts and extension counts studied in, e.g., \cite[Chapter 3]{Janson}, \cite[Chapter 4]{Bol-book}, \cite{Spencer_count},~\cite{Warnke}. In particular, for a fixed graph $H$, let $X_H$ be the number of subgraphs in $\mathbf{G}_n$ isomorphic to $H$. For a MRG $H[R]$ with a set of roots $R=\{v_1,\ldots,v_r\}$, with no edges between the roots, and a set $U=\{u_1,\ldots,u_r\}\subset[n]$, let $X_{R,H}(U)$ be the set of subgraphs $H'\subset\mathbf{G}_n$ such that $U\subset V(H')$ and there exists a function $f:V(H)\to V(H')$ that maps every $v_i$ to $u_i$ and preserves all edges in $E(H)$. In other words, $X_{R,H}(U)$ is the number of MRGs $H'[U]$ isomorphic to $H[R]$ in $\mathbf{G}_n$, and it is easy to represent this count by a term in our language. In particular, when $H$ consists of a single edge and a single root, $X_{R,H}(u)$ is the degree of $u$ in $\mathbf{G}_n$. 
\begin{corollary}
\label{cor:dense-cor}
For every $H$,
\begin{compactitem}
    \item $X_H$ is concentrated around 
    $$
    \mathbb{E}X_{H}=\frac{1}{\mathrm{aut}(H)}(n)_{|V(H)|}\cdot p^{|E(H)|},
    $$
    where $\mathrm{aut}(H)$ is the number of automorphisms of $H$, and
    \item for every set of roots $R\subset V(H)$, $\max_U X_{R,H}(U)$ is concentrated around 
    $$
    \mathbb{E}\max_U X_{R,H}(U)=\left(\frac{1}{\mathrm{aut}(H[R])}+o(1)\right)n^{|V(H)|-r}\cdot p^{|E(H)|},
    $$ 
    where $\mathrm{aut}(H[R])$ is the number of automorphisms of MRG $H[R]$.
\end{compactitem}
\end{corollary}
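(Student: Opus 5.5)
The plan is to express both statistics as closed terms of $\Agglang_{\functionclasspoly}$ (in fact using only polynomial connectives with nonnegative coefficients, which lie in $\functionclasspoly$ by Corollary~\ref{cor:classes}). Concentration then follows from Theorem~\ref{th:dense-concentration}, and the asymptotics of the mean from a direct first-moment computation, made consistent with Theorem~\ref{th:dense-limits}.

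For the first bullet, let $h=|V(H)|$. The number of injective maps $V(H)\to[n]$ that preserve all edges of $H$ is
\[
\Sigma_{v_1}\cdots\Sigma_{v_h}\,\prod_{i<j}\bigl(1-\mathrm{E}(v_i=v_j)\bigr)\prod_{\{i,j\}\in E(H)}\mathrm{E}(v_i,v_j).
\]
The factor $1-[v_i=v_j]$ is not a nonnegative polynomial, so I instead use the unary connective $\neg(x)=\mathbbm{1}_{x=0}$. It is relative Lipschitz, since its only specialisations are the constant $1$ at $0$ and the identically zero function on $(0,\infty)$, and it is asymptotically polynomial. The product of the atoms is a polynomial with coefficient $1$. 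Dividing by $\mathrm{aut}(H)$ is multiplication by a positive constant, which is allowed by Claim~\ref{clm:rellipproduct}. The resulting term equals $X_H$. By linearity of expectation,
\[
\mathbb{E}X_H=(n)_h\,p^{|E(H)|}/\mathrm{aut}(H)>0,
\]
so the nonzero case of Theorem~\ref{th:dense-concentration} gives $X_H/\mathbb{E}X_H\to1$ in every $L_p$.

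For the second bullet, I build the analogous term $\tau(u_1,\dots,u_r)$ with free variables the roots. It sums over the $h-r$ non-root variables, requires distinctness of all variables, and multiplies by the edge indicators; dividing by $\mathrm{aut}(H[R])$ gives $X_{R,H}(U)$. Here I must also require distinctness of the roots themselves. Then $\max_U X_{R,H}(U)=\supl_{u_1}\cdots\supl_{u_r}\tau$ is a closed term, and Theorem~\ref{th:dense-concentration} applies once we know the expectation is positive, which is immediate from the lower bound below.

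The remaining and main step is the asymptotics of $\mathbb{E}\max_U X_{R,H}(U)$. The lower bound is easy: it is at least $\mathbb{E}X_{R,H}(U_0)$ for a fixed $U_0$, which is
\[
(n-r)_{h-r}\,p^{|E(H)|}/\mathrm{aut}(H[R])\sim n^{h-r}p^{|E(H)|}/\mathrm{aut}(H[R]).
\]
For the upper bound I need $\max_U X_{R,H}(U)\le(1+o(1))\mathbb{E}X_{R,H}(U_0)$ with high probability, together with uniform integrability. Uniform integrability is automatic because $\max_U X_{R,H}(U)\le n^{h}$ deterministically, so an $n^{-\omega(1)}$ failure probability suffices. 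For each fixed $U$, $X_{R,H}(U)$ is a polynomial of degree $|E(H)|$ in independent edge indicators whose relevant partial means are polynomially large when $p$ is constant. I would apply Kim--Vu polynomial concentration, or a bounded-differences argument on the vertex exposure of the $n-r$ non-root vertices, where each vertex changes the count by $O(n^{h-r-1})$. Either gives $\mathbb{P}\bigl(|X_{R,H}(U)-\mathbb{E}X_{R,H}(U)|>n^{h-r-1/2}\log n\bigr)\le e^{-\Omega(\log^2 n)}$. A union bound over the $n^r$ choices of $U$ then yields the claim, and hence
\[
\mathbb{E}\max_U X_{R,H}(U)=\bigl(1/\mathrm{aut}(H[R])+o(1)\bigr)n^{h-r}p^{|E(H)|}.
\]
The obstacle I expect is making this tail bound clean: edges between roots are absent by assumption, but edges from roots to non-roots are not vertex-exposed symmetrically. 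I would therefore expose vertices of $[n]\setminus U$ one by one, revealing all their edges to earlier vertices and to $U$, which keeps the Lipschitz constant at $O(n^{h-r-1})$.
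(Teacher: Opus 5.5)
Your proposal is correct. For the concentration statements it follows the paper's route: the paper gives no separate proof and presents the corollary as a consequence of Theorems~\ref{th:dense-concentration} and~\ref{th:dense-limits} once $X_H$ and $\max_U X_{R,H}(U)$ are written as closed terms. Your terms are valid: $\mathbbm{1}_{x=0}$ has only the identically zero specialisation, and monomials and positive scalings lie in $\functionclasspoly$ by Corollary~\ref{cor:classes}.

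The genuine difference is how you identify the leading constant of $\mathbb{E}\max_U X_{R,H}(U)$. Theorem~\ref{th:dense-limits} alone gives only the form $(c+o(1))n^{\gamma}$. Within the paper's framework the constant is read off from the approximants of Lemma~\ref{lem:dense_main}:
\begin{itemize}
\item each summation step replaces $\varphi'_n$ by $n\sum_{\mathbf{t}}\varphi'_n(\mathbf{t})\atppercent(\mathbf{t})$, since the terms with $v\in\overline{u}$ vanish by the distinctness factors;
\item after the $h-r$ sums, every root type with distinct roots gets $\varphi_n=(1+o(1))n^{h-r}p^{|E(H)|}/\mathrm{aut}(H[R])$, because root--root edges are not seen;
\item the $\supl$ steps take a maximum of these equal values;
\item \eqref{eq:tau-exp-concentration} then transfers $\varphi_n$ to $\mathbb{E}\tau$.
\end{itemize}

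You instead use a first-moment lower bound, plus a vertex-exposure McDiarmid bound with a union bound over root tuples for the upper bound. This is sound, elementary, and gives an explicit $O(n^{-1/2}\log n)$ relative error. Notice, though, that it already proves the concentration of the maximum on its own: the same tail bound applied to $X_{R,H}(U_0)\le\max_U X_{R,H}(U)$ gives the lower side. So for this bullet the meta-theorem becomes redundant. The paper's route, by contrast, obtains the constant from the general machinery with no statistic-specific estimate.

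One small correction: $\mathbb{E}\tau>0$ does not by itself rule out the first alternative of Theorem~\ref{th:dense-concentration}, which only forces $\mathbb{E}\tau\to 0$. You should cite $\mathbb{E}\tau=\Omega(1)$ instead, which your first-moment computations provide.
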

We stress that, for $X_H$ and for $\max_U X_{R,H}(U)$ (although, for a restricted class of MRGs $H[R]$) tighter concentration results are known~\cite{Bollobas_degrees,Zhuk-ext1,Zhuk-ext2,smallsubgraphs,SS,Zhuk-ext3}. Nevertheless, even though we do not specify the rate of convergence in Theorem~\ref{th:dense-limits}, our proof technique allows to establish optimal bounds, up to a constant factor. In particular, it is possible to extract that, for every 
$\tau$ in $\Agglang_{\functionclasspoly}$, there exists $C>0$ such that $|\tau-\mathbb{E}\tau|\leq C\mathbb{E}\tau\sqrt{\ln n/n}$ $\whp$, which gives the right order of magnitude of the second-order term, for example, for the maximum degree.

\subsection{Proofs of Theorem~\ref{th:dense-concentration} and Theorem~\ref{th:dense-limits}}
\label{sc:dense-proofs}

We will abstract concrete tuples in graphs by their atomic types. We will then establish  a bound on the number of extensions of a given atomic type within
random graphs.  This extension bound is used in the main inductive construction of this section --- Lemma~\ref{lem:dense_main}, which is a quantitative version of Theorem~\ref{th:dense-concentration} and Theorem~\ref{th:dense-limits}, and will easily be seen to imply both of them.

\begin{definition}[Atomic Types] \label{def:atomictype}
An {\it atomic type} is a labeled graph on a vertex set $[k]$, where $k$ is a positive integer. Let $k$ be a positive integer, let $G$ be a graph on $[n]$, let $\overline{u}=(u_1,\ldots,u_k)\in([n])_k$, and let $H$ be a graph on $[k]$. We say that $\overline{u}$ has atomic type $H$ in $G$ if $H[(1, \ldots,k)]$ is isomorphic to  $(G\upharpoonright\overline{u})[\overline{u}]$\footnote{In other words, the bijection $f:[k]\to\overline{u}$ that maps every $i\in[k]$ to $u_i$ is an isomorphism of $H$ and $G\upharpoonright\overline{u}$.}. 
Let $\atomictypes$ be the set of all atomic types. For a graph $G$ on $[n]$, let $\atomictypeof^G:\cup_k([n])_k\to\atomictypes$ map tuples of vertices in $G$ to their atomic types.  In what follows, when we omit the superscript, we mean $G=\mathbf{G}_n$, and the corresponding object is now random:  we write $\atomictypeof=\atomictypeof^{\mathbf{G}_n}$. 
We let $\atomictypes_k$  be the set of all atomic types of $k$-tuples, that is for any $k$-tuple $\overline{u}\in([n])_k$,  we have $\atomictypeof(\overline{u})\in\mathcal{A}_k$. We also let $\varnothing\in\mathcal{A}$ be an atomic type of graph on an empty set of vertices.  

\end{definition}

\begin{lemma}[Atomic type extension percentages stabilize]\label{lem:extensionsstabilizedense}
Let $k$ be a non-negative integer and let $k'$ be a positive integer. Let $\mathbf{t}_0\in\atomictypes_k$ be an atomic type. Let $\atomictypes_{k'}[t_0]\subset\atomictypes_{k+k'}$ be the set of atomic types $\mathbf{t}$  such that $\mathbf{t}\upharpoonright[k_0]=\mathbf{t}_0$.

There exists a map $\atppercent:\atomictypes_{k'}[\mathbf{t}_0]\to\mathbb{R}_{\geq 0}$ 
such that the following holds for every $k$-tuple $\overline{u}\in ([n])_k$, possible empty if $k=0$, with probability $1-n^{-\omega(1)}$. If $\atomictypeof(\overline{u})=\mathbf{t}_0$, then, for every $\mathbf{t}\in\atomictypes_{k'}[\mathbf{t}_0]$, the number of tuples $\overline{v}\in ([n]\setminus\overline{u})_{k'}$ such that $\atomictypeof(\overline{u},\overline{v})=\mathbf{t}$ is at most $\left(\atppercent(\mathbf{t})\cdot n^{k'-1/2}(\ln n)^{0.9}\right)$-far from $\atppercent(\mathbf{t})\cdot n^{k'}$. If $\atppercent(\mathbf{t})=0$, then the number of such tuples is 0 a.s.

\end{lemma}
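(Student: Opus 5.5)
We define $\atppercent(\mathbf{t})$ as the probability that a fixed tuple of fresh vertices realises $\mathbf{t}$ over $\overline{u}$. For $\mathbf{t}\in\atomictypes_{k'}[\mathbf{t}_0]$, let $e_1(\mathbf{t})$ be the number of edges of $\mathbf{t}$ with at least one endpoint in $\{k+1,\ldots,k+k'\}$. Let $e_0(\mathbf{t})$ be the number of non-edges of such pairs, so that $e_1+e_0=\binom{k'}{2}+kk'$. We set $\atppercent(\mathbf{t})=p^{e_1(\mathbf{t})}(1-p)^{e_0(\mathbf{t})}$, which is positive since $p$ is a constant in $(0,1)$. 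The final sentence of the lemma is then vacuous, as $\atppercent$ never vanishes. (If one wants the convention $\mathbf{t}\upharpoonright[k]\neq \mathbf{t}_0$ impossible by definition, nothing changes.) Fix $\overline{u}$ and condition on the edges inside $\overline{u}$ forming $\mathbf{t}_0$; this event depends only on pairs within $\overline{u}$, which are independent of all pairs touching $[n]\setminus\overline{u}$. Let $X_{\mathbf{t}}$ be the number of $\overline{v}\in([n]\setminus\overline{u})_{k'}$ with $\atomictypeof(\overline{u},\overline{v})=\mathbf{t}$. Then $\mathbb{E}X_{\mathbf{t}}=(n-k)_{k'}\atppercent(\mathbf{t})=\atppercent(\mathbf{t})n^{k'}(1+O(1/n))$, and the deterministic error $O(\atppercent(\mathbf{t})n^{k'-1})$ is absorbed into $\atppercent(\mathbf{t})n^{k'-1/2}(\ln n)^{0.9}/2$.

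For the fluctuation we use a bounded differences (McDiarmid) argument. We expose vertex by vertex: for each $w\in[n]\setminus\overline{u}$, reveal all edges from $w$ to $\overline{u}$ and to earlier vertices of $[n]\setminus\overline{u}$. These $n-k$ exposures are independent. Changing the exposure at one vertex $w$ alters only tuples $\overline{v}$ containing $w$, and there are at most $k'n^{k'-1}$ of these. Hence $X_{\mathbf{t}}$ is $c$-Lipschitz with $c=k'n^{k'-1}$, and McDiarmid gives
$$
\mathbb{P}\bigl(|X_{\mathbf{t}}-\mathbb{E}X_{\mathbf{t}}|>\lambda\bigr)\leq 2\exp\left(-\frac{2\lambda^2}{(n-k)k'^2n^{2k'-2}}\right).
$$
We take $\lambda=\tfrac12\atppercent(\mathbf{t})n^{k'-1/2}(\ln n)^{0.9}$. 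The exponent becomes $\Theta(\atppercent(\mathbf{t})^2(\ln n)^{1.8})$, and since $\atppercent(\mathbf{t})$ is a positive constant this is $\omega(\ln n)$. The failure probability is therefore $n^{-\omega(1)}$. Because $|\atomictypes_{k'}[\mathbf{t}_0]|$ is a constant, a union bound over $\mathbf{t}$ gives the simultaneous statement for all $\mathbf{t}$ with probability $1-n^{-\omega(1)}$ for the fixed $\overline{u}$.

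The only delicate point is checking that the exponent $(\ln n)^{0.9}$ suffices. Vertex exposure gives a variance proxy of order $n^{2k'-1}$, so the deviation scale is $n^{k'-1/2}$. An extra factor $(\ln n)^{0.9}$ yields $\exp(-\Theta((\ln n)^{1.8}))$, which is superpolynomially small because $1.8>1$. Edge exposure would only give a variance proxy of order $n^{2}\cdot n^{2k'-4}=n^{2k'-2}$, so it would work too. Vertex exposure is simpler and also keeps the argument uniform in $k=0$.
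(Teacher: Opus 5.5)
Your proof is correct and is essentially the paper's argument. You use the same $\atppercent(\mathbf{t})$ (the conditional probability that a fresh tuple realises $\mathbf{t}$), the exact expectation $(n-k)_{k'}\atppercent(\mathbf{t})$, and a bounded-differences inequality yielding $\exp(-\Theta((\ln n)^{1.8}))$; the only change is that you expose vertices where the paper uses the edge-exposure martingale. Both filtrations give a variance proxy of order $n^{2k'-1}$, so your aside that edge exposure gives $n^{2k'-2}$ is wrong when $k\geq 1$: it overlooks the $k(n-k)$ pairs between $\overline{u}$ and the rest, each of which affects $\Theta(n^{k'-1})$ tuples.
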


\begin{proof}
 Fix $\mathbf{t}\in\atomictypes_{k'}[\mathbf{t}_0]$ and $\overline{u}\in ([n])_{k}$. For $\overline{v}\in ([n]\setminus \overline{u} )_{k'}$, let 
$$
 \atppercent(\mathbf{t}):=\mathbb{P}(\atomictypeof(\overline{u},\overline{v})=\mathbf{t}\mid\atomictypeof(\overline{u})=\mathbf{t}_0).
$$
Note that $\atppercent(\mathbf{t})$ does not depend on the choice of $\overline{v}$.

If $\atppercent(\mathbf{t})=0$, then the number tuples $\overline{v}$  such that $(\overline{u},\overline{v})$ has type $\mathbf{t}$ equals 0 a.s.

Assume $\atppercent(\mathbf{t})>0$. Let $Y$ be the number of $k'$-tuples $\overline{v}$  in $([n]\setminus\overline{u})_{k'}$ such that $\atomictypeof(\overline{u},\overline{v})=\mathbf{t}$, subject to $\{\atomictypeof(\overline{u})=\mathbf{t}_0\}$.

Recall that for a graph characteristic $\chi:2^{{[n]\choose 2}}\to\mathbb{R}$, the edge martingale of $\chi(\mathbf{G}_n)$ is the Doob martingale\footnote{Given a random variable $X$, defined on a measurable space $(\Omega,\mathcal{F})$, with $\mathbb{E}|X|<\infty$ and a suquence of $\sigma$-algebras $\mathcal{F}_0\subset\mathcal{F}_1\subset\ldots\subset\mathcal{F}$ (which is called a filtration), the discrete-time random process $(X_i=\mathbb{E}(X\mid \mathcal{F}_i))_{i\in\mathbb{Z}_{\geq 0}}$ is called a Doob martingale of $X$ with respect to the filtration $\mathcal{F}_0\subset\mathcal{F}_1\subset\ldots$.} of $\chi(\mathbf{G}_n)$ with respect to the filtration induced by the random graph process\footnote{A discrete-time random process $X_0,X_1,\ldots$ induces the sequence of $\sigma$-algebras $\sigma(X_0),\sigma(X_0,X_1),\ldots$, which is called a filtration. Recall that, for a random variable $\xi:\Omega\to E$, where $(\Omega,\mathcal{F})$ and $(E,\mathcal{E})$ are measurable spaces, $\sigma(\xi)=\{\xi^{-1}(A)\mid A\in\mathcal{E}\}$.} $(\mathbf{G}(i,j))_{1\leq i<j\leq n}$, where $\mathbf{G}(i,j)$ contains all edges $\{u,v\}$ of $\mathbf{G}_n$ where $u\leq i$ and $v\leq j$. We now apply the bounded-difference inequality~\cite[Corollary 2.27]{Janson} to the edge martingale of $Y$. First of all, 
$$
 \mathbb{E}Y=\atppercent(\mathbf{t})\cdot(n-k)_{k'}.
$$ 
Moreover, changing a single adjacency relation between two vertices outside of $\overline{u}$, changes types of $O(n^{k'-2})$ tuples (those containing both vertices in the relation) and
changing a single adjacency relation between a vertex in $\overline{u}$ and a vertex outside of $\overline{u}$, changes types of $O(n^{k'-1})$ tuples. Therefore,
\begin{align*}
 \mathbb{P}\biggl(|Y-\mathbb{E}Y|>n^{k'-1/2}\cdot(\ln n)^{0.9}/2\mid\atomictypeof(\overline{u})=\mathbf{t}_0\biggr)
 &\leq 2\exp\left(-\frac{n^{2k'-1}(\ln n)^{1.8}}{8(kn\cdot (n^{k'-1})^2+n^2
 \cdot (n^{k'-2})^2)}\right)
 \\
 &=\exp(-\Omega((\ln n)^{1.8}))=n^{-\omega(1)}.
\end{align*}
We get
\begin{align*}
 \mathbb{P}\left(|Y-\atppercent(\mathbf{t})\cdot n^{k'}|>n^{k'-1/2}(\ln n)^{0.9}\mid\atomictypeof(\overline{u})=\mathbf{t}_0\right)=n^{-\omega(1)},
\end{align*}
as required.
\end{proof}

We naturally extend the notion of types and the set $\atomictypes_k$ from tuples of distinct elements to tuples from $[n]^k$. 

We are now ready to state our main lemma which is a quantitative version of Theorems~\ref{th:dense-concentration}~and~\ref{th:dense-limits} for {\it open terms} in the language $\Agglang_{\functionclassrellip}$.

\begin{lemma}[Inductive Term Approximation] \label{lem:dense_main}
For every term $\tau \in \Agglang_{\functionclassrellip}$ with $k$ free variables, there exist constant $K>0$ and maps $\varphi_n:\atomictypes_k\to \mathbb{R}_{\geq 0}$ for every $n \in \nats$,  such that, for every $\overline{u}\in[n]^k$ and every $\mathbf{t}\in\atomictypes_k$, we have 
$$
\mathbb{P}(n^{-K}\leq\tau(\overline{u})\leq n^{K}\mid\tau(\overline{u})\neq 0)=1
$$ 
and one of the following  holds:
\begin{compactitem}
\item $\varphi_n(\mathbf{t})\equiv\mathrm{0}$ and $\tau$ is typically zero for tuples of type $\mathbf{t}$:  
 $$
 \mathbb{P}(\tau(\overline{u})=\varphi_n(\mathbf{t})=0\mid\atomictypeof(\overline{u})=\mathbf{t})
 =1-n^{-\omega(1)},
 $$ 
 \item $\varphi_n(\mathbf{t})>n^{-K}$, and $\tau$ is concentrated around $\varphi_n(\mathbf{t})$ for tuples of type $\mathbf{t}$:
\begin{equation}
\mathbb{P}\left(|\tau(\overline{u})-\varphi_n(\mathbf{t})|>\frac{\ln n}{\sqrt{n}}\cdot \varphi_n(\mathbf{t})\mid \atomictypeof(\overline{u}) =\mathbf{t}\right)=n^{-\omega(1)}.
\label{eq:concentratio-dense-main}
\end{equation}
\end{compactitem}
Moreover, if $\tau \in \Agglang_{\functionclasspoly}$, and $\varphi_n(\mathbf{t})>0$, then, for some positive $c=c(\mathbf{t})$ and real $\gamma=\gamma(\mathbf{t})$,  $$
\varphi_n(\mathbf{t})=(c+o(1))n^{\gamma}.
$$
\end{lemma}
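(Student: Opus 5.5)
We argue by structural induction on $\tau$, using a single inductive hypothesis for each subterm: a polynomial range bound $n^{-K}\le\tau\le n^K$ on $\{\tau\neq0\}$, a deterministic approximant $\varphi_n(\mathbf{t})$ depending only on the atomic type, and the dichotomy of the lemma, with error probability $n^{-\omega(1)}$ and relative error $\ln n/\sqrt n$. To absorb the constant-factor losses at connectives we prove a slightly stronger statement: relative error $C_\tau(\ln n)^{0.9}/\sqrt n$ for a constant $C_\tau$ depending on $\tau$. Since $(\ln n)^{0.9}C_\tau\le \ln n$ for large $n$, this implies \eqref{eq:concentratio-dense-main}. The super-polynomially small failure probabilities are what allow union bounds over the polynomially many tuples that arise at each aggregation step.

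\emph{Base cases.} Constants, $\mathrm E(u_i,u_j)$ and $u_i=u_j$ are deterministic given the atomic type. We set $\varphi_n(\mathbf t)$ equal to that value and take $\gamma=0$.

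\emph{Connectives.} For $f(\tau_1,\dots,\tau_m)$ we apply Theorem~\ref{cl:function-concentration} conditionally on $\atomictypeof(\overline u)=\mathbf t$, with $K$ replaced by $n^{K}$, $\delta=n^{-\omega(1)}$ and $\delta'=C(\ln n)^{0.9}/\sqrt n$, and set $\varphi_n=f(\varphi^{(1)}_n,\dots,\varphi^{(m)}_n)$. The theorem gives the range bound $n^{-Kb_f}$, $n^{Kb_f}$ and relative error $a_f\delta'$. If every $\varphi^{(i)}_n$ is either $0$ or $(c_i+o(1))n^{\gamma_i}$ and $f\in\functionclasspoly$, then asymptotic polynomiality of the specialisation $f_I$ gives $\varphi_n=(c+o(1))n^\gamma$. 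The case $\varphi_n=0$ occurs exactly when $f_I\equiv 0$.

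\emph{Aggregates.} Take $\Sigma_v\tau(\overline u,v)$ and split the sum over $v$ according to the atomic type $\mathbf t'\in\atomictypes_1[\mathbf t]$ of $(\overline u,v)$; this includes the finitely many degenerate types in which $v$ coincides with some $u_i$. For each nondegenerate $\mathbf t'$ with $\varphi_n(\mathbf t')>0$, Lemma~\ref{lem:extensionsstabilizedense} shows that w.h.p.\ (failure $n^{-\omega(1)}$) the number of such $v$ is $\atppercent(\mathbf t')n(1\pm(\ln n)^{0.9}/\sqrt n)$. A union bound over $v$ shows that every summand is within relative error $\delta'$ of $\varphi_n(\mathbf t')$. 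Types with $\varphi_n(\mathbf t')=0$ contribute $0$ w.h.p. The degenerate types contribute $O(1)$ values of size comparable to $\varphi_n$ of those types. We define
\[
\varphi_n^{\Sigma}(\mathbf t)=\sum_{\mathbf t'}\atppercent(\mathbf t')\,n\,\varphi_n(\mathbf t')+\sum_{\text{degenerate }\mathbf t'}\varphi_n(\mathbf t').
\]

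The step I expect to be the main obstacle is showing that the degenerate terms do not spoil the relative error $O((\ln n)^{0.9}/\sqrt n)$. A degenerate summand could exceed $n\,\varphi_n(\mathbf t')$ for the generic types; even then the total is still a finite sum of concentrated nonnegative quantities, so the relative error is a weighted average of relative errors and remains $O(\delta')$. Nonnegativity is essential here, since it prevents cancellation. The range bound grows only to $n^{K+1}$. A sum of finitely many terms of the form $(c_j+o(1))n^{\gamma_j}$ is again of this form, which handles the $\functionclasspoly$ part.

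For $\supl_v$ we take $\varphi_n=\max$ of $\varphi_n(\mathbf t')$ over the types $\mathbf t'$ that are realised w.h.p.; a type with $\atppercent(\mathbf t')>0$ is realised by about $n$ vertices. Degenerate types are always realised, and a maximum of concentrated quantities is concentrated with the same relative error. $\infl$ is treated symmetrically, except that zeros are kept, or it can be eliminated via Remark~\ref{rk:no-inf}. $\Mean$ is $\Sigma$ divided by $n$. For $\LMean_{u_1Ev}$, the numerator is a $\Sigma$ restricted to types in which $v\sim u_1$, and the denominator $\deg u_1$ concentrates around $pn$ by Lemma~\ref{lem:extensionsstabilizedense}; the ratio is handled by the connective $x/y$, which is relative Lipschitz on positive arguments. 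Finally, $\varphi_n$ is chosen to depend only on the type, so the probability bounds hold uniformly in $\overline u$.
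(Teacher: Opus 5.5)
Your proposal is correct and follows essentially the same route as the paper. The paper also uses structural induction with a strengthened error term (it proves the bound $\lambda\ln n/\sqrt n$ for every $\lambda>0$, where you track $C_\tau(\ln n)^{0.9}/\sqrt n$), applies Theorem~\ref{cl:function-concentration} at connectives, and handles $\Sigma$ and $\supl$ via Lemma~\ref{lem:extensionsstabilizedense} plus union bounds, defining $\varphi_n$ exactly as you do. Your extras are minor: you pass $n^{K}$ as the range parameter in Theorem~\ref{cl:function-concentration}, and you treat $\Mean$, $\LMean$ and $\infl$ directly, whereas the paper regards these as definable from $\Sigma$, $\supl$ and connectives (Remark~\ref{rk:no-inf}).
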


We first derive Theorem~\ref{th:dense-concentration} and Theorem~\ref{th:dense-limits} from Lemma~\ref{lem:dense_main}.

\begin{proof}[Proofs of Theorem~\ref{th:dense-concentration} and Theorem~\ref{th:dense-limits}]
Fix $\tau$. Due to Lemma ~\ref{lem:dense_main}, applied to the empty type, there exist a sequence $\varphi_n\in\mathbb{R}_{\geq 0}$ and constant $K>0$ such that 
\begin{compactitem}
\item either $\varphi_n\equiv 0$, $\mathbb{P}(\tau=0)=1-n^{-\omega(1)}$, and $\tau\leq n^K$ a.s. 

\item or $n^{-K}\leq \tau\leq n^K$ a.s., $\varphi_n>n^{-K}$, and, with probability at least $1-n^{-\omega(1)}$,
\begin{equation}
 |\tau-\varphi_n|\leq \frac{\ln n}{\sqrt{n}}\cdot \varphi_n.
\label{eq:tau_concentration_phi}
\end{equation}
\end{compactitem}
If $\varphi_n\equiv 0$, then 
$$
\mathbb{E}\tau^p=\mathbb{E}\left(\tau^p\cdot\mathbbm{1}_{0<\tau\leq n^K}\right)\leq n^{pK}\cdot\mathbb{P}(\tau\neq 0)=n^{-\omega(1)}.
$$ 
Thus we have achieved the first bullet item of the conclusions of Theorem~\ref{th:dense-concentration} and Theorem~\ref{th:dense-limits}. 

Now, let $\varphi_n>n^{-K}$. Then 
\begin{align}
\varphi_n\left(1-\frac{\ln n}{\sqrt{n}}\right)\left(1-n^{-\omega(1)}\right)&\leq\mathbb{E}\tau=\mathbb{E}\tau\cdot\mathbbm{1}_{0\leq\tau\leq n^{K}}\notag\\
&\leq\mathbb{E}\tau\cdot\mathbbm{1}_{|\tau/\varphi_n-1|\leq \ln n/\sqrt{n}}+n^{-\omega(1)}\notag\\
&<\varphi_n\left(1+\frac{\ln n}{\sqrt{n}}\right)+n^{-\omega(1)}.
\label{eq:tau-exp-concentration}
\end{align}
 By combining this with~\eqref{eq:tau_concentration_phi}, we get that with probability $1-n^{-\omega(1)}$,
\begin{align*}
 1-\frac{(\ln n)^2}{\sqrt{n}}<\frac{1-\ln n/\sqrt{n}}{1+\ln n/\sqrt{n}+n^{-\omega(1)}}\leq\frac{\tau}{\mathbb{E}\tau}
 \leq\frac{1+\ln n/\sqrt{n}}{1-\ln n/\sqrt{n}-n^{-\omega(1)}}<1+\frac{(\ln n)^2}{\sqrt{n}}
\end{align*}
and $\tau/\mathbb{E}\tau=n^{O(1)}$ a.s. We conclude that
$$
 \mathbb{E}\left|\frac{\tau}{{\mathbb E}\tau}-1\right|^p\leq \left(\frac{(\ln n)^2}{\sqrt{n}}\right)^p+n^{O(1)-\omega(1)}=o(1).
$$
Thus we have achieved the second bullet item of Theorem~\ref{th:dense-concentration}.

It only remains to observe that the second bullet item of Theorem~\ref{th:dense-limits} follows immediately from
the last conclusion of Lemma~\ref{lem:dense_main}: since $\varphi_n(t)=(c+o(1))n^{\gamma}$, the same holds true for $\mathbb{E}\tau$ due to~\eqref{eq:tau-exp-concentration}. This completes the proof.

\end{proof}

\begin{proof}[Proof of Lemma ~\ref{lem:dense_main}.]

We prove Lemma~\ref{lem:dense_main} by induction on the grammar for terms. 
Moreover, we prove a slightly stronger assertion --- that the factor $\frac{\ln n}{\sqrt{n}}$ in the bound~\eqref{eq:concentratio-dense-main} can be replaced with $\lambda\cdot\frac{\ln n}{\sqrt{n}}$ for any $\lambda>0$. 
We give the proof for atomic cases, and for the cases of function, summation, and $\supl$ applications. 
 We do not need to consider the case $\tau(\overline{u})=\infl_v\tau'(\overline{u},v)$ since it can be expressed as a combination of connective functions from $\functionclasspoly$ and $\supl$ --- see Remark~\ref{rk:no-inf}.

Fix $\lambda>0$.

\paragraph{Atomic cases.}

\begin{itemize}

\item $\tau\equiv\mathrm{const}$. In this case, the assertion is obvious for $\varphi_n=\tau$.

\item $\tau(u_1,u_2)=\mathbbm{1}_{ \mathrm{E}(u_1,u_2)}$ is a term with two free variables. We let $\varphi_n(\mathbf{t})=1$ if $\mathbf{t}$ has an edge and $\varphi_n(\mathbf{t})=0$ if $\mathbf{t}$ does not have an edge. Then $\tau(u_1,u_2)=\varphi_n(\atomictypeof(u_1,u_2))$ for every pair of vertices $(u_1,u_2)\in[n]^2$. The assertion of Lemma~\ref{lem:dense_main} clearly follows. 

\item The case of $\tau(u_1,u_2)=\mathbbm{1}_{u_1=u_2}$ is analogous to the previous edge case.

\end{itemize}

\paragraph{Function application.} $\tau=f(\tau_1,\ldots,\tau_m)$, where $f\in\functionclassrellip$. Let $\tau$ and $\tau_i$ have $k$ and $k_i$ free variables respectively. Substituting  $\overline{u}\in([n])_k$ as an input of $\tau$, we derive arguments of $\tau_1,\ldots,\tau_m$ --- we denote the respective subtuples of $\overline{u}$ by $\overline{u}_i$. Fix an atomic type $\mathbf{t}\in\atomictypes_k$. The event $\atomictypeof(\overline{u})=\mathbf{t}$ defines the types $t_i$ of $\overline{u}_i$, and we assume that this event holds in what follows. 

By induction, we get that sequences $\varphi^i_n=\varphi^i_n(\mathbf{t}_i)$, $i\in[m]$, have been already defined and that terms $\tau_1,\ldots,\tau_m$ satisfy the assertion of Lemma~\ref{lem:dense_main} with the error term $\ln n/\sqrt{n}$ replaced by $\lambda\ln n/\sqrt{n}$.

In particular, there are constants $K_1,\ldots,K_m$ such that a.s. $\tau_i\leq n^{K_i}$ for every $i\in[m]$. We apply Theorem~\ref{cl:function-concentration} with 
$$
\xi_i:=\tau_i, \, \varphi_i:=\varphi^{(i)}_n, \, \delta:=n^{-\omega(1)}, \, \delta':=\lambda\cdot\frac{\ln n}{\sqrt{n}},
$$
and 
$$
K:=\max\{K_1,\ldots,K_m\}.
$$ 
We immediately get that, for some constants $a,b>0$ (that only depend on $f$), 
$$
\mathbb{P}(\tau\in[K^{-b},K^b]\mid\tau\neq 0)=1,
$$
and either $\varphi_n\equiv 0$ and $\mathbb{P}(\tau=0)=1-n^{-\omega(1)}$, or $\varphi_n\geq K^{-b}$ and 
$$
\mathbb{P}\left(|\tau-\varphi_n|>(a\lambda)\frac{\ln n}{\sqrt{n}}\varphi_n\right)=1-n^{-\omega(1)}.
$$

Finally, we assume that $\tau \in \Agglang_{\functionclasspoly}$ and, in particular, $f\in\functionclasspoly$. Let  $I=\{i_1,\ldots,i_{m'}\}\subset[m]$ be the set of  $i\in[m]$ such that $\varphi^i_n\not\equiv 0$. We also assume, by induction, that, for every $i\in I$, $\varphi_n^i=(c_i+o(1))n^{\gamma_i}$ for some positive $c_i$ and real $\gamma_i$. We  must prove there is $c>0$ and $\gamma\in\mathbb{R}$ such that $\varphi_n=(c+o(1))n^{\gamma}$. Since $f\in\functionclasspoly$, we get that $f_I$ is asymptotically polynomial. Then, either $f_I\equiv 0$ and $\varphi_n=0$, or 
\begin{align*}
\varphi_n=f(\varphi^1_n,\ldots,\varphi^m_n)=f_I(\varphi^I_n)
=f_I\biggl((c_{i_1}+o(1))n^{\gamma_{i_1}},\ldots,(c_{i_{m'}}+
o(1))n^{\gamma_{i_{m'}}}\biggr)
=(c+o(1))n^{\gamma},
\end{align*}
for some $c>0$ and $\gamma\in\mathbb{R}$, by the definition of asymptotically polynomial functions. This completes the proof.

\paragraph{Summation approximation.} $\tau(\overline{u})=\sum_v\tau'(v,\overline{u})$, where, by the induction hypothesis, there exist constant $K'>0$ and a map $\varphi'_n:\atomictypes_{k+1}\to\mathbb{R}_{\geq 0}$ such that, for every type $\mathbf{t}$, every tuple $(\overline{u},v)$, and every $\lambda>0$,
\begin{align*}
\mathbb{P}\left(|\tau'(\overline{u},v)-\varphi'_n(\mathbf{t})|>\lambda\frac{\ln n}{\sqrt{n}}\varphi'_n(\mathbf{t})\mid \atomictypeof(\overline{u},v)=\mathbf{t}\right)
=n^{-\omega(1)}
\end{align*}
and $\mathbb{P}(n^{K'}\leq\tau'(\overline{u},v)\leq n^{K'}\mid \tau'(\overline{u},v)\neq 0)=1$. Moreover, either $\varphi'_n(\mathbf{t})\equiv 0$ or $\varphi'_n(\mathbf{t})\geq n^{-K'}$.

Fix $\overline{u}\in([n])_k$ and type $\mathbf{t}_0\in\atomictypes_k$. In what follows, we assume that the event $\atomictypeof(\overline{u})=\mathbf{t}_0$ holds and define the value of $\varphi_n(\mathbf{t}_0)$. Let $\atomictypes[\mathbf{t}_0]\subset\atomictypes_{k+1}$ be the set of atomic types achievable by $(\overline{u},v)$, subject to $\atomictypeof(\overline{u})=\mathbf{t}_0$. 
 Due to Lemma~\ref{lem:extensionsstabilizedense}, there exists a map $\atppercent:\atomictypes[\mathbf{t}_0]\to\mathbb{R}_{\geq 0}$ such that, for every $\mathbf{t}\in\atomictypes[\mathbf{t}_0]$, the number of vertices $v\notin\overline{u}$ such that $\atomictypeof(v,\overline{u})=\mathbf{t}$ equals $\atppercent(\mathbf{t})\cdot n(1\pm o(n^{-1/2}\ln n))$ with probability $1-n^{-\omega(1)}$. 
 We also
 assume $\atppercent(\mathbf{t})\neq 0$ for all $\mathbf{t}\in\mathcal{A}[\mathbf{t}_0]$ since types $\mathbf{t}$ with $\atppercent(\mathbf{t})=0$ do not contribute to $\tau$ with probability $1-n^{-\omega(1)}$.

Let $\atomictypes_{\mathrm{in}}[\mathbf{t}_0]\subset\atomictypes[\mathbf{t}_0]$ be the set of atomic types of tuples $(\overline{u},v)$ such that $v\in\overline{u}$ 
 and let $\atomictypes_{\mathrm{out}}[\mathbf{t}_0]=\atomictypes[\mathbf{t}_0]\setminus\atomictypes_{\mathrm{in}}[\mathbf{t}_0]$. For every $\mathbf{t}\in\atomictypes[\mathbf{t}_0]$, such that $\varphi'_n(\mathbf{t})\equiv 0$, we have
$$
\mathbb{P}(\tau'(\overline{u},v)=0\mid\atomictypeof(\overline{u},v)=\mathbf{t})=1-n^{-\omega(1)}.
$$
Therefore,
by the union bound, if, for every $\mathbf{t}\in\atomictypes[\mathbf{t}_0]$, we have that $\varphi'_n(\mathbf{t})\equiv 0$, then $\tau(\overline{u})=0$ with probability $1-n^{-\omega(1)}$. 

Therefore, we can assume that there exists $\mathbf{t}\in\atomictypes[\mathbf{t}_0]$ such that $\varphi'_n(\mathbf{t})>0$. 
 If all such $\mathbf{t}$ are from $\atomictypes_{\mathrm{in}}[\mathbf{t}_0]$, then, by the union bound, with probability $1-n^{-\omega(1)}$, $\tau(\overline{u})=\sum_{v\in\overline{u}}\tau'(v,\overline{u})$, which reduces to the case of function application, since linear functions with positive coefficients belong to $\functionclasspoly$.

Thus, we can assume that there exists $\mathbf{t}\in\atomictypes_{\mathrm{out}}[\mathbf{t}_0]$ such that $\varphi'_n(\mathbf{t})\geq n^{-K'}>0$.  
We are now ready to define
\begin{align}
\label{eq:phi-sum}
\varphi_n(\mathbf{t}_0)&:=n\sum_{\mathbf{t}\in\atomictypes_{\mathrm{out}}[\mathbf{t}_0]}\varphi'_n(\mathbf{t})\atppercent(\mathbf{t})+\sum_{v\in\overline{u}}\varphi'_n(\atomictypeof(\overline{u},v)).
\end{align}
By the union bound, with probability $1-n^{-\omega(1)}$,
\begin{align*}
 |\tau(\overline{u})-\varphi_n(\mathbf{t}_0)|&=
 \left|\sum_{v\in\overline{u}}\tau'(v,\overline{u})+\sum_{v\notin\overline{u}}\tau'(v,\overline{u})-\varphi_n(\mathbf{t}_0)\right|\\
 &\leq\sum_{v\in\overline{u}}|\tau'(v,\overline{u})-\varphi'_n(\atomictypeof(\overline{u},v))|\\
 &\quad\quad\quad+\biggl|\sum_{v\notin\overline{u}}\sum_{\mathbf{t}\in\atomictypes_{\mathrm{out}}[\mathbf{t}_0]}\mathbbm{1}_{\atomictypeof(\overline{u},v)=\mathbf{t}}\tau'(v,\overline{u})-n\sum_{\mathbf{t}\in\atomictypes_{\mathrm{out}}[\mathbf{t}_0]}\varphi'_n(\mathbf{t})\atppercent(\mathbf{t})\biggr|\\
 &\leq\sum_{v\in\overline{u}}\frac{\lambda\ln n}{\sqrt{n}}\varphi'_n(\atomictypeof(\overline{u},v))+\sum_{\mathbf{t}\in\atomictypes_{\mathrm{out}}[\mathbf{t}_0]}\left|n\varphi'_n(\mathbf{t})\atppercent(\mathbf{t})-\sum_{v:\,\atomictypeof(\overline{u},v)=\mathbf{t}}\tau'(v,\overline{u})\right|\\
  &\leq\sum_{v\in\overline{u}}\frac{\lambda\ln n}{\sqrt{n}}\varphi'_n(\atomictypeof(\overline{u},v))+\sum_{\mathbf{t}\in\atomictypes_{\mathrm{out}}[\mathbf{t}_0]}\left(\frac{\lambda\ln n}{\sqrt{n}}+\frac{2(\ln n)^{0.9}}{\sqrt{n}}\right)n\varphi'_n(\mathbf{t})\atppercent(\mathbf{t})\\
 &\leq\frac{2\lambda\ln n}{\sqrt{n}}\varphi_n(\mathbf{t}_0).
\end{align*}

Moreover,  with probability $1-n^{-\omega(1)}$, 
\begin{align*}  \tau(\overline{u})=\sum_v\tau'(\overline{u},v)\geq\sum_{v:\,\varphi'_n(\atomictypeof(\overline{u},v))\neq 0}\tau'(\overline{u},v)\geq n^{-K'}
\end{align*}
and 
\begin{align*}
\varphi_n(\mathbf{t}_0)=n\sum_{\mathbf{t}\in\atomictypes_{\mathrm{out}}[\mathbf{t}_0]:\,\varphi'_n(\mathbf{t})\atppercent(\mathbf{t})\neq 0}\varphi'_n(\mathbf{t})\atppercent(\mathbf{t})+\sum_{\mathbf{t}\in\atomictypes_{\mathrm{in}}[\mathbf{t}_0]:\,\varphi'_n(\mathbf{t})\neq 0}\varphi'_n(\mathbf{t})\geq n^{-K'},
\end{align*}
as required.

In all the considered cases, a.s.
$$
 \tau(\overline{u})=\sum_v\tau'(v,\overline{u})\leq\sum_v n^{K'}=n^{K'+1}.
$$

Finally, let $\tau \in \Agglang_{\functionclasspoly}$. We assume that, for every $\mathbf{t}\in\atomictypes[\mathbf{t}_0]$ such that $\varphi'_n(\mathbf{t})\not\equiv 0$, $\varphi'_n(\mathbf{t})=(c(\mathbf{t})+o(1))n^{\gamma(\mathbf{t})}$ for some positive $c(\mathbf{t})$ and real $\gamma(\mathbf{t})$. The case of $\varphi'_n(\mathbf{t})\equiv 0$ for all $\mathbf{t}\in\atomictypes[\mathbf{t}_0]$ is discussed above. Therefore, it remains to prove that if $\varphi'_n(\mathbf{t})\not\equiv 0$ for some $\mathbf{t}\in\atomictypes[\mathbf{t}_0]$, then  there exist constant $c>0$ and $\gamma\in\mathbb{R}$ such that $\varphi_n(\mathbf{t})=(c+o(1))n^{\gamma}$. Again, we do not need consider the case when, for every $\mathbf{t}\in\atomictypes_{\mathrm{out}}[\mathbf{t}_0]$, $\atppercent(\mathbf{t})\varphi'_n(\mathbf{t})\equiv 0$, since it reduces to the function application. Therefore, we can assume that $\atppercent(\mathbf{t})\varphi'_n(\mathbf{t})>\atppercent(\mathbf{t})n^{-K'}>0$ for some $\mathbf{t}\in\atomictypes_{\mathrm{out}}[\mathbf{t}_0]$. In this case, $\varphi_n(\mathbf{t}_0)$ is defined in~\eqref{eq:phi-sum}, which gives
\begin{align*}
\varphi_n(\mathbf{t}_0)&=n\sum_{\mathbf{t}\in\atomictypes_{\mathrm{out}}[\mathbf{t}_0]:\,\varphi'_n(\mathbf{t})>0}\atppercent(\mathbf{t})(c(\mathbf{t})+o(1))n^{\gamma(\mathbf{t})}+\sum_{\mathbf{t}\in\atomictypes_{\mathrm{in}}[\mathbf{t}_0]:\,\varphi'_n(\mathbf{t})>0}
(c(\mathbf{t})+o(1))n^{\gamma(\mathbf{t})}\\
&=(c+o(1))n^{\gamma},
\end{align*}
where  $\gamma$ is the maximum of 
$(\gamma(\mathbf{t})+1)$, over $\mathbf{t}\in\atomictypes_{\mathrm{out}}[\mathbf{t}_0]$ such that $\varphi'_n(\mathbf{t})>0$, and $\gamma(\mathbf{t})$, over $\mathbf{t}\in\atomictypes_{\mathrm{in}}[\mathbf{t}_0]$ such that $\varphi'_n(\mathbf{t})>0$,
and 
\begin{align*}
    c=\sum_{\mathbf{t}\in\atomictypes_{\mathrm{out}}[\mathbf{t}_0]:\,\varphi'_n(\mathbf{t})>0,\,\gamma(\mathbf{t})+1=\gamma}c(\mathbf{t})\atppercent(\mathbf{t})+\sum_{\mathbf{t}\in\atomictypes_{\mathrm{in}}[\mathbf{t}_0]:\,\varphi'_n(\mathbf{t})>0,\,\gamma(\mathbf{t})=\gamma}c(\mathbf{t}).
\end{align*}

\paragraph{$\supl$ approximation.} $\tau(\overline{u})=\max_v\tau'(\overline{u},v)$. By the induction hypothesis, there exist constant $K'>0$ and a map $\varphi'_n:\atomictypes_{k+1}\to\mathbb{R}_{\geq 0}$ as in the case of sum approximation. We immediately get that 
$$
\tau(\overline{u})=\max_v\tau'(\overline{u},v)\leq n^{K'}\quad\text{ a.s.}
$$
Moreover, the event $\tau(\overline{u})\neq 0$ implies there exists $v$ such that $\tau'(\overline{u},v)\neq 0$ which, in turn, implies $\tau'(\overline{u},v)\geq n^{-K'}$, and the latter event implies $\tau(\overline{u})\geq n^{-K}$. So,
$$
\mathbb{P}(\tau(\overline{u})\geq n^{-K'}\mid \tau(\overline{u})\neq 0)=1.
$$
As above, we fix $\mathbf{t}_0\in\atomictypes_k$ and assume $\atomictypeof(\overline{u})=\mathbf{t}_0$. Let $\atomictypes[\mathbf{t}_0]$ be the set of atomic types achievable by $(v,\overline{u})$, with $\mathrm{type}(\overline{u})=\mathbf{t}_0$, and let the sets $\atomictypes_{\mathrm{in}}[\mathbf{t}_0]$ and $\atomictypes_{\mathrm{out}}[\mathbf{t}_0]$ be defined as in the previous case. For $\mathbf{t}\in\atomictypes_{\mathrm{out}}[\mathbf{t}_0]$, recall $\atppercent(\mathbf{t})$, whose existence is asserted by Lemma~\ref{lem:extensionsstabilizedense}. If, for every $\mathbf{t}\in\atomictypes_{\mathrm{in}}[\mathbf{t}_0]$, $\varphi'_n(\mathbf{t})\equiv 0$, and, for every $\mathbf{t}\in\atomictypes_{\mathrm{out}}[\mathbf{t}_0]$, $\atppercent(\mathbf{t})\varphi'_n(\mathbf{t})\equiv 0$, then $\tau(\overline{u})=0$ with probability $1-n^{-\omega(1)}$. We set $\varphi_n(\mathbf{t}_0)\equiv 0$ in this case, which is one of the possible scenarios in the assertion of Lemma ~\ref{lem:dense_main}. 

Assume  $\varphi'_n(\mathbf{t})\not\equiv 0$ for some $\mathbf{t}\in\atomictypes_{\mathrm{in}}[\mathbf{t}_0]$ or $\atppercent(\mathbf{t})\varphi'_n(\mathbf{t})\not\equiv 0$  for some $\mathbf{t}\in\atomictypes_{\mathrm{out}}[\mathbf{t}_0]$. We further assume that, for every type $\mathbf{t}\in\atomictypes_{\mathrm{out}}[\mathbf{t}_0]$, $\atppercent(\mathbf{t})\neq 0$. Let 
$$
\varphi_n:=\max_{\mathbf{t}\in\atomictypes[\mathbf{t}_0]}\varphi'_n(\mathbf{t})>n^{-K'}.
$$
By the induction hypothesis and the union bound, with probability $1-n^{-\omega(1)}$, for every $\mathbf{t}\in\atomictypes[\mathbf{t}_0]$ and every $v\in [n]$, the term $\tau'(\overline{u},v)$ satisfies
$
|\tau'(\overline{u},v)-\varphi'_n(\mathbf{t})|\leq\frac{\lambda\ln n}{\sqrt{n}}\cdot \varphi'_n(\mathbf{t})$, if $\atomictypeof(\overline{u},v) =\mathbf{t}$.
 Therefore, with probability $1-n^{-\omega(1)}$,
\begin{align*}
 \tau\geq\max_{v\in[n]}\varphi'_n(\atomictypeof(\overline{u},v))\left(1-\lambda\cdot\frac{\ln n}{\sqrt{n}}\right)=\varphi_n\left(1-\lambda\cdot\frac{\ln n}{\sqrt{n}}\right)
\end{align*}
and
\begin{align*}
 \tau\leq\max_{v\in[n]}\varphi'_n(\atomictypeof(\overline{u},v))\left(1+\lambda\cdot\frac{\ln n}{\sqrt{n}}\right)=\varphi_n\left(1+\lambda\cdot\frac{\ln n}{\sqrt{n}}\right).
\end{align*}

Finally, if $\tau \in \Agglang_{\functionclasspoly}$, then, applying the induction hypothesis and the definition of $\varphi_n$, we immediately get that $\varphi_n=(c+o(1))n^{\gamma}$ for some $c>0$ and $\gamma\in\mathbb{R}$, completing the proof.

\end{proof}

\section{Sparse irrational case}
\label{sec:sparse-all}

We now consider {\erdosrenyi} random graphs with  
$$
p=p(n)=n^{-\alpha},\, \text{ where $0<\alpha<1$ is irrational.} 
$$
The value of $\alpha$ will be fixed throughout this section. Let $\mathbf{G}_n\sim G(n,p)$.
In this section, we prove an analogue of Theorems~\ref{th:dense-concentration}~and~\ref{th:dense-limits} in the sparse regime.

\begin{theorem}[Concentration for sparse random graphs]
\label{th:sparse-concentration-and-limits}
The conclusions of Theorem~\ref{th:dense-concentration} and Theorem~\ref{th:dense-limits} hold for $\mathbf{G}_n\sim G(n,n^{-\alpha})$ as well.
\end{theorem}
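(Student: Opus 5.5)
Following the dense case, I would reduce Theorem~\ref{th:sparse-concentration-and-limits} to a sparse analogue of Lemma~\ref{lem:dense_main}. The derivation of the two theorems from that lemma goes through verbatim: a.s. polynomial bounds, the dichotomy for $\varphi_n$, and failure probabilities $n^{-\omega(1)}$ give $L_p$ concentration and $\mathbb{E}\tau=(c+o(1))n^\gamma$. The new ingredient is the notion of ``type''. Atomic types are too coarse when $p=n^{-\alpha}$: for instance, the number of common neighbours of two vertices depends on more than the adjacency between them. I would therefore replace atomic types by Shelah--Spencer $t$-closure types. For a tuple $\overline{u}$, let $\mathrm{cl}_t(\overline{u})$ be its $t$-closure, obtained by iteratively adding rigid extensions with at most $t$ new vertices. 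Its isomorphism type, rooted at $\overline{u}$, is the type $\mathbf{t}$. For irrational $\alpha$, whp all $t$-closures of $k$-tuples have bounded size (constant in $n$), so only finitely many types occur with non-negligible probability. Moreover, types with $v(H)-\alpha e(H)$ structure that appears with probability $n^{-\Omega(1)}$ can be absorbed, as in the dense case where types with $\atppercent=0$ were dropped. The depth $t$ needed is chosen by induction on the term; it grows with the quantifier depth of $\tau$.

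The key probabilistic input replaces Lemma~\ref{lem:extensionsstabilizedense}. For a type $\mathbf{t}_0$ of $\overline{u}$ and an extension type $\mathbf{t}$ of $(\overline{u},v)$, I need a deterministic $\varphi$-like count $N_n(\mathbf{t})$ with two properties. First, the number of $v$ such that $(\overline{u},v)$ has closure type $\mathbf{t}$ is $N_n(\mathbf{t})(1\pm n^{-\varepsilon})$ with probability $1-n^{-\omega(1)}$. Second, $N_n(\mathbf{t})=(c+o(1))n^{\gamma}$ with $\gamma=v-\alpha e$ of the non-closure part. There are two cases. If $v$ lies in $\mathrm{cl}_t(\overline{u})$, the count is bounded and fixed by $\mathbf{t}_0$. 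If $v$ is outside, then $(\overline{u},v)$ is safe over the closure of $\overline{u}$; I would use Spencer's counting of extensions (the maximal-extension results of~\cite{Spencer_count}, together with Kim--Vu type polynomial concentration) to get $n^{\gamma}$ copies, with $\gamma>0$ bounded away from $0$ by irrationality. I also need that almost all of these copies have generic closure, i.e.\ their closure equals the closure of $\overline{u}$ union the new vertex. This is the standard ``generic extension'' statement from the Shelah--Spencer proof of the zero-one law, made quantitative with error $n^{-\varepsilon}$ and failure probability $n^{-\omega(1)}$. Since the tolerance in the relative error is now $n^{-\varepsilon}$ rather than $\ln n/\sqrt n$, I would prove the lemma with an unspecified $\varepsilon=\varepsilon(\tau)>0$ that shrinks at each induction step.

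With this in place, the induction cases mirror the dense proof. The atomic and function-application cases use Theorem~\ref{cl:function-concentration} unchanged, with $\delta'=n^{-\varepsilon}$. The summation case splits, as before, into closure-internal and external $v$; here $\varphi_n(\mathbf{t}_0)$ is $\sum_{\mathbf{t}}N_n(\mathbf{t})\varphi'_n(\mathbf{t})$, which is asymptotically polynomial when the pieces are. For $\supl$, $\varphi_n$ is the maximum of $\varphi'_n(\mathbf{t})$ over types that are realised whp. The subtle point is that a type of probability $n^{-\Omega(1)}$ per tuple may still be realised by some $v$. So ``realised whp'' must be decided via $N_n(\mathbf{t})\to\infty$ versus $N_n(\mathbf{t})=0$ whp, and it is exactly irrationality that excludes the borderline $\gamma=0$. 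The case $\infl$ follows by Remark~\ref{rk:no-inf}.

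The main obstacle is the quantitative extension lemma. The counts must hold uniformly over all $\overline{u}$ with super-polynomially small failure, so that union bounds over $n^{k}$ tuples survive. They must also be simultaneously tight enough to yield relative error $n^{-\varepsilon}$, and the closures of the extended tuples must be controlled. I would first try a Kim--Vu polynomial inequality for the extension counts, conditioned on the closure of $\overline{u}$. I would then bound non-generic extensions by a first-moment argument over the finitely many rigid configurations, which are all of strictly negative exponent by irrationality.
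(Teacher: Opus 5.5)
Your construction of the inductive lemma is essentially the paper's. It uses closure types indexed by a rapid sequence $s_0\ge s_1\ge\dots\ge s_D=0$ and quantitative counts of sparse extensions with no attached $s$-dense extensions (Theorem~\ref{thm:extensionexpectation}). For $\Sigma$, $\varphi_n$ is a sum of $\varphi'_n$ weighted by extension counts; for $\supl$, it is a maximum over realised types. The paper, like you, then calls the passage from the lemma to the theorem ``verbatim'' from the dense case, and that passage is where the argument fails.

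In the dense case, that passage used that the exceptional event has \emph{unconditional} probability $n^{-\omega(1)}$, which beats the a.s.\ bound $\tau\le n^{K}$. The sparse lemma gives $n^{-\omega(1)}$ only \emph{conditionally} on $\mathcal{L}$ and on the closure type. For a closed term this means conditioning on $\closure_{s}(\varnothing)=\varnothing$, i.e.\ on having no small subgraph of density above $1/\alpha$. That event fails with only polynomially small probability, e.g.\ $\Theta(n^{4-6\alpha})$ for $K_4$ when $\alpha>2/3$. Your remark that rare types ``can be absorbed, as in the dense case where types with $\atppercent=0$ were dropped'' is exactly the invalid step. A type with $\atppercent=0$ never occurs, whereas a configuration of probability $n^{-c}$ can carry a term of size $n^{K}$ with $K$ much larger than $c$.

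In fact the $L_p$ part of the statement is false, so this step cannot be repaired. Let $\alpha\in(2/3,1)$ be irrational and
\[
\tau=\Sigma_{w_1}\Sigma_{w_2}\Sigma_{v_1}\Sigma_{v_2}\Sigma_{v_3}\Sigma_{v_4}\prod_{1\le i<j\le 4}\mathrm{E}(v_i,v_j)=24\,n^{2}X_{K_4},
\]
a closed term of $\Agglang_{\functionclasspoly}$ (the connective is a monomial). Then $\mathbb{P}(\tau\ne 0)\le\mathbb{E}X_{K_4}=O(n^{4-6\alpha})\to 0$, so $\tau=0$ w.h.p., while $\mathbb{E}\tau=(1+o(1))n^{6-6\alpha}\to\infty$. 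Hence neither alternative of Theorem~\ref{th:dense-concentration} holds. The term $\tau+1$ further shows that $\mathbb{E}\tau$ need not track $\varphi_n$ even when $\varphi_n>0$.

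What your argument (and Lemma~\ref{lem:sparse_main}) does prove, using only $\mathbb{P}(\mathcal{L})\to 1$ and $\mathbb{P}(\closure_s(\varnothing)=\varnothing)\to 1$, is the in-probability version. Either $\tau=0$ w.h.p., or $\tau/\varphi_n\to 1$ in probability for a deterministic $\varphi_n$, with $\varphi_n=(c+o(1))n^{\gamma}$ when all connectives lie in $\functionclasspoly$. Claims about $\mathbb{E}\tau$ and $L_p$ would require controlling $\tau$ on rare dense configurations, which is impossible in general.
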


Similarly, Theorem~\ref{th:sparse-concentration-and-limits} implies concentration for the number of subgraphs and the maximum number of MRGs.
\begin{corollary}
\label{cor:sparse-cor}
The conclusions of Corollary~\ref{cor:dense-cor} hold for $\mathbf{G}_n\sim G(n,n^{-\alpha})$ as well.
\end{corollary}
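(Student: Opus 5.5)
The plan is to mirror the dense argument: prove a sparse analogue of Lemma~\ref{lem:dense_main} and then deduce Theorem~\ref{th:sparse-concentration-and-limits} exactly as Theorems~\ref{th:dense-concentration} and~\ref{th:dense-limits} were deduced. Atomic types are too coarse when $p=n^{-\alpha}$, because the relevant statistics of a tuple depend on its neighbourhood structure. I would therefore replace atomic types by \emph{closure types}. For a tuple $\overline{u}$, consider its $(r)$-closure: the union of all rigid extensions of $\overline{u}$ with at most $r$ extra vertices, where an extension $(\overline{u},\overline{v})$ is rigid if $v(H)-\alpha e(H)<0$ for the extension graph and all its sub-extensions. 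The type of $\overline{u}$ is then the isomorphism type of this closure. By the Shelah--Spencer theory, since $\alpha$ is irrational, closures of bounded tuples have bounded size $\whp$. Moreover, for irrational $\alpha$ every extension is either strictly dense ($v-\alpha e<0$) or strictly sparse ($v-\alpha e>0$), with no boundary case. The inductive lemma would assert that, with probability $1-n^{-\omega(1)}$ uniformly over tuples of a given closure type $\mathbf{t}$, we have $\tau(\overline{u})=0$ when $\varphi_n(\mathbf{t})\equiv0$, or $|\tau(\overline{u})-\varphi_n(\mathbf{t})|\le n^{-\epsilon}\varphi_n(\mathbf{t})$ for some $\epsilon>0$ depending on the term. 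Here $\varphi_n$ has the form $(c+o(1))n^{\gamma}$ when the connectives lie in $\functionclasspoly$, and $\tau\in\{0\}\cup[n^{-K},n^{K}]$ a.s. The error $\ln n/\sqrt n$ is replaced by a polynomial saving $n^{-\epsilon}$, because the counts are now of order $n^{v-\alpha e}$ and not of order $n^{k'}$.

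The key ingredient replacing Lemma~\ref{lem:extensionsstabilizedense} is an extension-count lemma. Fix a closure type $\mathbf{t}_0$ of $\overline{u}$ and a type $\mathbf{t}$ of $(\overline{u},v)$. Then the number of $v$ with $\mathrm{type}(\overline{u},v)=\mathbf{t}$ is either $0$ $\whp$ (uniformly), or bounded and determined by $\mathbf{t}_0$ when $v$ lies in the closure, or equal to $(c(\mathbf{t})+O(n^{-\epsilon}))n^{\beta(\mathbf{t})}$ with $\beta>0$, with probability $1-n^{-\omega(1)}$. I would prove this by writing $\beta=f_{\alpha}$ of the maximal sparse ("safe") extension. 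Concentration follows from Spencer's counting extensions theorem~\cite{Spencer_count}, upgraded to failure probability $n^{-\omega(1)}$ and polynomial relative error via Kim--Vu polynomial concentration or Janson-type inequalities, combined with inclusion--exclusion to require that the closure of $(\overline{u},v)$ is exactly prescribed. Because irrationality gives a uniform gap $|v-\alpha e|\ge\delta_r>0$, the corrections coming from vertices whose closure is bigger than prescribed have a smaller exponent. This step is the main obstacle: it must control simultaneously the counts of all types, with the required uniformity and superpolynomially small failure probability.

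Given this lemma, the inductive cases follow the dense proof almost verbatim. Atomic terms are determined by the type. Function application is handled by Theorem~\ref{cl:function-concentration} with $\delta'=n^{-\epsilon}$ and by asymptotic polynomiality. For summation, I would set $\varphi_n(\mathbf{t}_0)=\sum_{\mathbf{t}\text{ out}}c(\mathbf{t})n^{\beta(\mathbf{t})}\varphi'_n(\mathbf{t})+\sum_{\text{closure}}\varphi'_n(\cdot)$ and use a union bound over the $n$ choices of $v$; the relative errors add, since all summands are nonnegative. For the maximum, I would take $\varphi_n=\max_{\mathbf{t}\text{ realised}}\varphi'_n(\mathbf{t})$, where "realised" means the count is nonzero $\whp$, which the extension lemma decides; the types with zero count contribute nothing with probability $1-n^{-\omega(1)}$. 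The function $\infl$ is removed via Remark~\ref{rk:no-inf}. Two points need care. First, the closure type of $(\overline{u},v)$ must determine the closure types of the subtuples used by subterms; this holds because closures are monotone, provided the closure radius $r$ grows with term depth, so I would fix $r$ as a function of quantifier depth. Second, exponents remain of the form $\gamma\in\mathbb{R}$ and constants remain positive, so the conclusion $\mathbb{E}\tau=(c+o(1))n^{\gamma}$ and $L_p$ concentration follow exactly as in the dense derivation.
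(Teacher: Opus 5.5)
Up to its last sentence, your proposal is a sketch of Theorem~\ref{th:sparse-concentration-and-limits}, and it follows the same lines as the paper: closure types with radii tied to term depth, Theorem~\ref{thm:extensionexpectation} in place of Lemma~\ref{lem:extensionsstabilizedense}, and the same inductive cases. The gap is that you never pass from that theorem to the corollary you were asked to prove. Two steps are missing.

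\textbf{Expressing the statistics as terms.} You need to write $X_H$ and $\max_U X_{R,H}(U)$ as closed terms of $\Agglang_{\functionclasspoly}$. This is routine: take nested $\Sigma$'s over a product of edge indicators and distinctness indicators $g(\mathbbm{1}_{u=v})$, with $g(0)=1$ and $g(x)=0$ for $x>0$. Scale by $1/\mathrm{aut}$, and add $r$ nested $\supl$'s over the root variables.

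\textbf{Identifying the concentration value.} This is the more serious omission. The theorem only says that either $\tau=0$ $\whp$, or $\tau$ concentrates around a mean $(c+o(1))n^{\gamma}$ with $c$ and $\gamma$ unspecified. The corollary asserts particular values of $c$ and $\gamma$. In the sparse regime that identification is not automatic, and it fails without extra hypotheses that the dense case hides.

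For $X_H$, the mean is exact by linearity. However, you land in the concentrated alternative only if $\rho^{\max}(H)<1/\alpha$. Otherwise $X_H=0$ $\whp$ by Fact~\ref{fact:dense-graphs}, although $\mathbb{E}X_H>0$ (e.g.\ $H=K_4$ with $\alpha>2/3$). In that case $\mathbb{E}|X_H/\mathbb{E}X_H-1|\geq \mathbb{P}(X_H=0)\to 1$.

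For the maximum, you need $X_{R,H}(U)=(1\pm n^{-\varepsilon})\mu$ simultaneously for all $U$, with failure probability $n^{-\omega(1)}$. This is Theorem~\ref{thm:extensionexpectation} with $s=0$ applied to the pair $(H\upharpoonright R,H)$, and it is available only when this pair is sparse. Combined with the deterministic bound $\max_U X_{R,H}(U)\le n^{|V(H)|}$, it gives the claimed mean and the $L_p$ statement.

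Without sparseness the displayed formula is false. Take $\alpha\in(1/2,1)$ and let $H[R]$ be the path $v_1wv_2$ rooted at its ends. Then $\max_U X_{R,H}(U)$ is the maximum codegree, which $\whp$ equals $\lfloor 2/(2\alpha-1)\rfloor\geq 2$, the largest $t$ with $\rho^{\max}(K_{2,t})<1/\alpha$. Meanwhile $n^{|V(H)|-r}p^{|E(H)|}=n^{1-2\alpha}\to 0$.

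The paper's own justification, which is just ``apply Theorem~\ref{th:sparse-concentration-and-limits},'' has the same hole. A correct sparse version needs the restrictions $\rho^{\max}(H)<1/\alpha$ and $(H\upharpoonright R,H)$ sparse. For non-sparse rooted pairs, the theorem still yields concentration, but around a different $(c+o(1))n^{\gamma}$.
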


In particular, the number of triangles in $\mathbf{G}_n$ is concentrated around $\frac{1}{6}n^{3-3\alpha}$ $\whp$ Similarly to the dense case, concentration results for the number of subgraphs and maximum number of MRGs, extending a set of roots,  were known, see~\cite{smallsubgraphs,Spencer_count,Warnke}. Nevertheless, our proof technique allows to derive a finer result for extensions 
 which, in this generality\footnote{To our knowledge, this result is only known for so called ``strictly balanced'' 
  MRGs~\cite{Warnke}.}, is not stated elsewhere: letting $\xi:=\max_U X_{R,H}(U)$, there exists $\varepsilon>0$ such that, $|\xi/\mathbb{E}\xi-1|\leq n^{-\varepsilon}$ $\whp$

\paragraph{Proof strategy.} We prove Theorem~\ref{th:sparse-concentration-and-limits} following the strategy in the proof of Theorems~\ref{th:dense-concentration},~\ref{th:dense-limits}. However, the proof is significantly more complex due to the following crucial difference between the sparse and the dense regime: in the sparse regime Lemma~\ref{lem:extensionsstabilizedense} does not hold. In other words, it is not true that any tuple of vertices can be extended 
   in $\mathbf{G}_n$ in any given way. For example, if $\alpha>\frac{1}{2}$, then most pairs of vertices do not have a common neighbour $\whp$
In order to resolve this issue, we use the tools that were developed in~\cite{shelahspencersparse} to prove the FO 0-1 law. In particular, we distinguish between sparse and dense extensions (see Definition~\ref{def:extensions}, in what follows)   and show that every tuple of vertices in $\mathbf{G}_n$ has any fixed sparse extension, but almost all tuples of vertices do not have a fixed dense extension $\whp$ (see Theorem~\ref{thm:extensionexpectation} and Corollary~\ref{cor:sparse_graphs}). Actually, these concepts allow to classify all possible extensions, since every extension can be represented as a sequence of sparse and dense extensions.  Moreover, for every fixed tuple of vertices $\overline{u}$, the number of dense extensions is bounded (see Fact~\ref{fact:closuresize}), which motivates the notion of a closure $\mathrm{cl}_s(\overline{u})$ of $\overline{u}$ that comprises all possible dense extensions of a given size $s$, see Definition~\ref{def:closure}. 
  Given a tuple and its closure, we know that it does not have any dense extensions outside the closure, while, the number of sparse extensions is large and is concentrated 
    around its expectation, as in Lemma~\ref{lem:extensionsstabilizedense}. It leads to the following refinement of the notion of an atomic type, which we call a closure type: the closure type of a tuple $\overline{u}$ is the MRG $\mathrm{cl}_s(\overline{u})[\overline{u}]$, see Definition~\ref{def:closuretype}.

\paragraph{Organisation.} The remainder of this section is organised as follows. In Subsection~\ref{subsc:sparse-proof} we formalise the above proof strategy: 
\begin{itemize}
\item First, we introduce definitions of sparse, dense extensions and closures and prove their required properties. 
\item Next, we establish Theorem~\ref{thm:extensionexpectation} --- an analogue of the stabilization Lemma~\ref{lem:extensionsstabilizedense}. 
\item Then we define closure types and state and prove Lemma~\ref{lem:sparse_main}, which plays the same role as Lemma~\ref{lem:dense_main} in the dense case --- it approximates terms and establishes concentration of every (not necessarily closed) term around $\varphi_n$. 
\end{itemize}
Finally, in Subsection~\ref{subsc:convergence-law-sparse}, we extend the results from~\cite{usneurips24,uslics} by showing that terms in the language $\Aggo{\Lipsch,\Mean,\LMean,\supl}$ satisfy the convergence law.

\subsection{Proof of Theorem~\ref{th:sparse-concentration-and-limits}}
\label{subsc:sparse-proof}

\begin{definition}[Number of extending edges or vertices, density, maximum density]
For a graph $G$ and its induced subgraph $H$, we let $e(V(H),G)$ or $e(H,G)$ denote 
$|E(G)|-|E(H)|$.
We use either $v(V(H),G)$ or $v(H,G)$ to denote $|V(G)|-|V(H)|$.
We also let $\rho(G)=\frac{|E(G)|}{|V(G)|}$ denote the {\it density of $G$} and $\rho^{\max}(G)=\max_{H\subseteq G}\rho(H)$ denote the {\it maximum density of~$G$}.
\end{definition}

\begin{definition}[Extensions: dense, $s$-dense, and sparse] \label{def:extensions}
Let $F_1$ be an induced subgraph of a graph~$F_2$.
\begin{compactitem}
\item Graph $F_2$ is a \emph{dense extension} of $F_1$ if, for every set $V(F_1)\subseteq S\subset V(F_2)$, 
$$
e(S,F_2)>\frac{1}{\alpha}v(S,F_2).
$$
We also say $(F_1, F_2)$ is a {\it dense pair}. If a dense pair $(F_1,F_2)$ has $v(F_1,F_2)\leq s$, then we say that the pair $(F_1, F_2)$ is {\it $s$-dense}. 

\item Graph $F_2$ is a \emph{sparse extension} of $F_1$ (and $(F_1,F_2)$ is a {\it sparse pair}), if, for every set $V(F_1)\subset S\subseteq V(F_2)$, 
$$
e(F_1,F_2\upharpoonright S)<\frac{1}{\alpha}v(F_1,F_2\upharpoonright S).
$$.

\item Let $F_1$ have $V(F_1)=[k]$, let $F'_1$ be an induced subgraph of $F'_2$ with $V(F'_1)=\{u_1,\ldots,u_k\}$, and let $f:V(F_2)\to V(F'_2)$ be an isomorphism of graphs $F_2\setminus E(F_1)$ and $F'_2\setminus E(F'_1)$ such that $f(i)=u_i$ for every $i\in[k]$ (that is, $f$ is an isomorphism of MRGs $(F_2\setminus E(F_1))[(1,\ldots,k)]$ and $(F'_2\setminus E(F'_1))[(u_1,\ldots,u_k)]$). 
 Then we say that $F'_2$ is an {\it $(F_1,F_2)$-extension of $F'_1$} (or sometimes we say that that $F'_2$ is an $(F_1,F_2)$-extension of $V(F'_1)$ or a $(V(F_1),F_2)$-extension).
\end{compactitem}
\end{definition}

The following fact is the direct corollary of the definition of a dense pair.
\begin{fact}
\label{fact:dense-sub-extension}
If pair $(F_1,F_2)$ is dense and $F'_1\supset F_1$ is such that $V(F_2)\setminus V(F'_1)\neq\varnothing$, then the pair $(F'_1,F'_1\cup F_2)$ is also dense.
\end{fact}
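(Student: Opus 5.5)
We unpack the definition and check the inequality for every intermediate set. Write $F'_2:=F'_1\cup F_2$, so that $V(F'_2)=V(F'_1)\cup V(F_2)$ and $F'_1$ is an induced subgraph of $F'_2$ (we read $F'_1\cup F_2$ as the graph on this vertex set containing all edges of $F'_1$ and $F_2$, with $F'_1$ induced in it, as is implicit in the statement). The goal is to show that, for every set $S$ with $V(F'_1)\subseteq S\subsetneq V(F'_2)$,
$$
e(S,F'_2)>\frac{1}{\alpha}v(S,F'_2).
$$
The hypothesis $V(F_2)\setminus V(F'_1)\neq\varnothing$ ensures that $V(F'_2)\neq V(F'_1)$, so this range of sets $S$ is nonempty and the extension is nontrivial.

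Fix such an $S$ and let $T:=S\cap V(F_2)$. Since $V(F_1)\subseteq V(F'_1)\subseteq S$, we have $V(F_1)\subseteq T$. Every vertex of $V(F'_2)\setminus S$ lies in $V(F_2)$, because $V(F'_1)\subseteq S$. Hence $V(F'_2)\setminus S=V(F_2)\setminus T$, and in particular $T\subsetneq V(F_2)$. Therefore
$$
v(S,F'_2)=v(T,F_2).
$$

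Next, compare edge counts. The quantity $e(S,F'_2)$ counts the edges of $F'_2$ with at least one endpoint in $V(F'_2)\setminus S=V(F_2)\setminus T$. Every edge of $F_2$ with an endpoint in $V(F_2)\setminus T$ is an edge of $F'_2$ and is counted there. This gives
$$
e(S,F'_2)\geq e(T,F_2).
$$
Since $(F_1,F_2)$ is dense and $V(F_1)\subseteq T\subsetneq V(F_2)$, we obtain
$$
e(S,F'_2)\geq e(T,F_2)>\frac{1}{\alpha}v(T,F_2)=\frac{1}{\alpha}v(S,F'_2).
$$

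The only points needing care are that $T$ is a legitimate intermediate set for $(F_1,F_2)$, and that no edges counted for $F_2$ are lost in $F'_2$. Both were checked above.
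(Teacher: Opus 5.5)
Your proof is correct, and it is the direct check from the definition that the paper intends; the paper itself gives no argument, only calling the fact a direct corollary of the definition of a dense pair. Setting $T=S\cap V(F_2)$ gives $V(F'_1\cup F_2)\setminus S=V(F_2)\setminus T$, and every edge of $F_2$ leaving $T$ still leaves $S$; this is exactly the reduction to the density of $(F_1,F_2)$, and it holds whether $F'_1\cup F_2$ is read as the union graph or as the induced subgraph on $V(F'_1)\cup V(F_2)$.
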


\begin{definition}[Closure] \label{def:closure}
Let $G$ be a graph on $[n]$.
Let $k$ be a positive integer, let $\overline{u}\in ([n])_k$, and let $s$ be a positive integer. The {\it $s$-closure} of $\overline{u}$ in $G$, denoted $\closure^G_s(\overline{u})$ is an induced subgraph  $H$ such that
\begin{compactitem}
\item $\overline{u}\subseteq V(H)$;
\item $H$ does not have $s$-dense extensions in $G$;
\item there exists a sequence of graphs $G\upharpoonright\overline{u}= H_1\subset\ldots\subset H_r=H$ such that each pair $(H_i,H_{i+1})$ is $s$-dense.
\end{compactitem}
\end{definition}
We let $\closure_0^G(\overline{u})=G\upharpoonright\overline{u}$. We note that the closure is well-defined, see~\cite[Section 4.2]{Strange}.

We shall use two facts about closures of subgraphs of other closures.

\begin{fact}
\label{fact:closure-inclusion}
Let $F'\subset F$ be subgraphs of a graph $G$. Let $s'\leq s$ be positive integers. Then $\closure_{s'}^{G}(F')\subseteq \closure_{s}^{G}(F)$.
\end{fact}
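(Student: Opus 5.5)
We argue directly from Definition~\ref{def:closure}, using Fact~\ref{fact:dense-sub-extension}. Let $H=\closure_s^G(F)$. It is reached from $G\upharpoonright V(F)$ by a chain of $s$-dense pairs and has no $s$-dense extension in $G$. Let $H'=\closure_{s'}^G(F')$, reached by a chain $G\upharpoonright V(F')=H'_1\subset\ldots\subset H'_r=H'$ of $s'$-dense pairs. We show by induction on $i$ that $V(H'_i)\subseteq V(H)$; the case $i=r$ gives the claim, since both are induced subgraphs of $G$.

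\textbf{Base case.} Here $V(H'_1)=V(F')\subseteq V(F)\subseteq V(H)$.

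\textbf{Induction step.} Assume $V(H'_i)\subseteq V(H)$, and suppose, towards a contradiction, that $V(H'_{i+1})\not\subseteq V(H)$. The pair $(H'_i,H'_{i+1})$ is dense with $H'_i\subseteq H$ and $V(H'_{i+1})\setminus V(H)\neq\varnothing$. Apply Fact~\ref{fact:dense-sub-extension} with $F_1=H'_i$, $F_2=H'_{i+1}$ and $F'_1=H$, taking unions as induced subgraphs of $G$. It shows that $(H,\,G\upharpoonright(V(H)\cup V(H'_{i+1})))$ is a dense pair. The number of new vertices is
$$|V(H'_{i+1})\setminus V(H)|\leq v(H'_i,H'_{i+1})\leq s'\leq s,$$
so this pair is $s$-dense. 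Hence $H$ has an $s$-dense extension in $G$, contradicting the second bullet of Definition~\ref{def:closure}. Therefore $V(H'_{i+1})\subseteq V(H)$.

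\textbf{Checking the fact applies.} The only delicate point is the form of Fact~\ref{fact:dense-sub-extension} being used. Its statement writes $F'_1\cup F_2$, whereas we need the induced union inside $G$. Taking the induced union only adds edges to the extension, namely edges between $V(H)\setminus V(H'_i)$ and the new vertices. Each quantity $e(S,\cdot)$ therefore only increases, and the density inequality is preserved.

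To verify density directly, take any $S$ with $V(H)\subseteq S\subset V(H)\cup V(H'_{i+1})$. Set $S'=S\cap V(H'_{i+1})$, which contains $V(H'_i)$ and is a proper subset of $V(H'_{i+1})$. The new vertices outside $S$ are exactly those outside $S'$, so $v(S,\cdot)=v(S',H'_{i+1})$. Every edge of $H'_{i+1}$ not inside $S'$ has an endpoint among these vertices, so it is counted in $e(S,\cdot)$, giving $e(S,\cdot)\geq e(S',H'_{i+1})$. Since $(H'_i,H'_{i+1})$ is dense,
$$e(S,\cdot)\geq e(S',H'_{i+1})>\tfrac1\alpha\, v(S',H'_{i+1})=\tfrac1\alpha\, v(S,\cdot).$$
This is what the induction step needs.
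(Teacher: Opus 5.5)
Your proof is correct and follows the paper's approach. The paper justifies this fact in one line (a chain of $s'$-dense extensions of $F'$ is also a chain of dense extensions for $F$); your induction is the rigorous version of that remark. It finds the first chain element leaving $\closure_s^G(F)$ and uses Fact~\ref{fact:dense-sub-extension} to obtain a forbidden $s$-dense extension of the closure, exactly as the paper argues for Fact~\ref{fact:closure-intersection}, and your direct check that the induced union still gives a dense pair fills in a detail the paper leaves implicit.
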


Fact~\ref{fact:closure-inclusion} follows immediately from the definition of a closure since any sequence of dense extensions of $F'$ is also a sequence of dense extensions of $F$.

\begin{fact}
\label{fact:closure-intersection}
Let $F$ be a subgraph of a graph $G$, let $s,s'$ be positive integers, and let $F\subset F'\subseteq\closure^G_s(F)$. If $\closure^G_{s'}(F')$ has at most $s$ vertices, then $\closure_{s'}^{G}(F')\subseteq \closure_{s}^{G}(F)$.
\end{fact}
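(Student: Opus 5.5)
The plan is to show that $\closure^G_{s'}(F')$ can be reached from $F$ by a chain of $s$-dense extensions. Once this is established, the maximality property of $\closure^G_s(F)$ gives the inclusion. Write $H:=\closure^G_s(F)$ and $H':=\closure^G_{s'}(F')$. By the definition of closure, there is a chain
$$G\upharpoonright V(F')=H'_1\subset\ldots\subset H'_r=H',$$
in which each pair $(H'_i,H'_{i+1})$ is $s'$-dense. I will prove by induction on $i$ that $V(H'_i)\subseteq V(H)$. The base case $i=1$ holds because $F'\subseteq H$.

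For the induction step, assume $V(H'_i)\subseteq V(H)$ and suppose, for contradiction, that $V(H'_{i+1})\not\subseteq V(H)$. Consider the graph $H\cup H'_{i+1}$ induced on $V(H)\cup V(H'_{i+1})$, viewed as an extension of $H$. Since $(H'_i,H'_{i+1})$ is dense and $H'_i\subseteq H$ with $V(H'_{i+1})\setminus V(H)\neq\varnothing$, Fact~\ref{fact:dense-sub-extension} (applied with $F_1=H'_i$ and $F'_1=H$) shows that $(H,H\cup H'_{i+1})$ is a dense pair.

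The remaining task is the size bound: the number of new vertices is at most $|V(H'_{i+1})\setminus V(H)|\le |V(H')|\le s$, using the hypothesis $|V(H')|\le s$. Hence the pair is $s$-dense, so $H$ has an $s$-dense extension in $G$. This contradicts the second defining property of $\closure^G_s(F)$. Therefore $V(H'_{i+1})\subseteq V(H)$, and at $i=r$ we get $V(H')\subseteq V(H)$. Since both graphs are induced subgraphs of $G$, this gives $H'\subseteq H$.

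The main point to handle carefully is the correct application of Fact~\ref{fact:dense-sub-extension}. Restricting a dense pair to the part outside a larger base must preserve the inequality $e(S,\cdot)>\frac1\alpha v(S,\cdot)$ for every intermediate $S$. This holds because any $S$ with $V(H)\subseteq S\subsetneq V(H)\cup V(H'_{i+1})$ intersected with $V(H'_{i+1})$ gives a valid intermediate set for the original pair, and the new vertices and edges counted are the same. The other point is that the $s$ in the bound on the number of new vertices comes from the assumption on $|V(H')|$, not from $s'$; this is why the hypothesis is needed.
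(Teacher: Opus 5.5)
Your proof is correct and is essentially the paper's argument. The first graph in the $s'$-dense chain for $\closure^G_{s'}(F')$ that leaves $\closure^G_s(F)$ gives, via Fact~\ref{fact:dense-sub-extension}, a dense extension of $\closure^G_s(F)$; it is $s$-dense by the hypothesis $|V(\closure^G_{s'}(F'))|\le s$, a step the paper leaves implicit and you make explicit; and this contradicts the definition of the closure. One small imprecision in your last paragraph: the induced graph on $V(H)\cup V(H'_{i+1})$ can have more edges leaving $S$ than the original pair (edges from new vertices to $V(H)\setminus V(H'_{i+1})$), so the counts are not ``the same'', but this only strengthens the inequality you need.
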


Fact~\ref{fact:closure-intersection} follows from the observation that ``subextensions'' 
 of a dense extension are also dense. Indeed, if $F'= H_1\subset\ldots\subset H_r=\closure_{s'}^{G}(F')$ is a sequence of $s'$-dense extensions that constitutes $\closure_{s'}^{G}(F')$ and if $H_i$ is the first graph in this sequence that is not entirely inside $\closure_{s}^{G}(F)$, then the pair $(\closure_{s}^{G}(F),\closure_{s}^{G}(F)\cup H_i)$ is dense due to Fact~\ref{fact:dense-sub-extension} --- a contradiction.

We also need the fact that closures in random graphs are bounded $\whp$, see the proof in~\cite[Theorem 4.3.2]{Strange}.

\begin{fact} \label{fact:closuresize}
There exists a coordinate-wise increasing function $\ell:\mathbb{Z}^2_{>0}\to\mathbb{Z}_{>0}$ such that, for every $k,s\in\mathbb{Z}_{>0}$, $\whp$, for every $\overline{u}\in([n])_k$,
$$
\left|V(\closure^{\mathbf{G}_n}_s(\overline{u}))\right|\leq \ell(k,s).
$$
\end{fact}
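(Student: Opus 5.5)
The plan is the standard first-moment argument of Shelah--Spencer, based on the finiteness of dense pairs up to a fixed size. Since $\alpha$ is irrational, for every dense pair $(F_1,F_2)$ with $v(F_1,F_2)=v$ and $e(F_1,F_2)=e$ we have $e>v/\alpha$ strictly. Moreover, for each fixed bound on $v$ there are only finitely many values of $e$, so there is a uniform $\varepsilon=\varepsilon(t)>0$ with $v-\alpha e\le-\varepsilon$ for every dense pair with $v(F_1,F_2)\le t$. Consequently, for a fixed dense pair with $|V(F_1)|=k'$, the expected number of $(F_1,F_2)$-extensions of some $k'$-tuple that is spanned inside a bounded set is at most $n^{v-\alpha e}$ per tuple, summed over the polynomially many relevant root tuples.

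The key step is to show that, w.h.p., no $k$-tuple $\overline{u}$ admits a chain of $s$-dense extensions $H_1\subset\dots\subset H_r$ with $|V(H_r)|$ larger than a threshold $\ell(k,s)$. I would first fix $t$ (to be chosen depending on $k,s$) and consider the first time the chain reaches at least $k+t$ vertices; truncating there, the union $H$ has between $k+t$ and $k+t+s$ vertices and is obtained from $\overline{u}$ by at most $t+s$ dense steps. Summing the excesses gives $e(\overline{u},H)-v(\overline{u},H)/\alpha\ge$ (number of steps)$\cdot\varepsilon'/\alpha$, where $\varepsilon'>0$ is the minimal excess $e-v/\alpha$ over the finitely many dense pairs with at most $s$ new vertices. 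This uses only the top-level condition $S=V(F_1)$ in each step.

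That bound alone is not enough, because $\overline{u}$ itself may carry edges. I would therefore bound the count against the full graph on $V(H)$: the expected number of copies of $H$ in $\mathbf{G}_n$ (roots included, $k$ vertices free) is at most
\[
n^{k+v}p^{e(\overline{u},H)}\le n^{k+v-\alpha e(\overline{u},H)}\le n^{k-\varepsilon' v/s},
\]
since the number of steps is at least $v/s$. Choosing $v\ge t:=\lceil (k+1)s/\varepsilon'\rceil$ makes this $O(n^{-1})$. There are finitely many isomorphism types of $H$ with at most $k+t+s$ vertices, so Markov's inequality and a union bound show that w.h.p. no such configuration exists. Hence every closure has at most $\ell(k,s):=k+t+s$ vertices, and $\ell$ can be taken coordinatewise increasing by replacing it with its running maximum.

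The main obstacle is the bookkeeping in the truncation. The closure is defined by maximal chains, so I must argue that any closure larger than $\ell$ contains an initial segment of its chain whose size falls in the window $[k+t,k+t+s]$. This holds because each step adds at most $s$ vertices. I must also check that edges between new vertices and earlier layers are correctly attributed, which is fine since $e(\overline{u},H)=\sum_i e(H_i,H_{i+1})$ is a telescoping sum.
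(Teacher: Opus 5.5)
Your proof is correct and is essentially the standard first-moment argument from Spencer's \emph{The Strange Logic of Random Graphs} (Theorem 4.3.2): truncate the chain of $s$-dense steps once it reaches about $k+t$ vertices, use that each step contributes at most $-\varepsilon'$ to $v-\alpha e$, and apply a union bound over the finitely many bounded-size configurations with the roots counted among the free vertices. The paper does not prove this fact itself but cites exactly that theorem. One small imprecision: $e(F_1,F_2)$ is not bounded in terms of $v(F_1,F_2)$ alone when $F_1$ is large, but larger $e$ only makes $v-\alpha e$ more negative, so the uniform gap is still attained at the least integer exceeding $v/\alpha$; you should also fix whether $\varepsilon'$ bounds $e-v/\alpha$ or $\alpha e-v$, since your text uses both.
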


Finally, the following fact claiming that $\whp$ $\mathbf{G}_n$ does not have dense sugrbaphs is a simple corollary of Markov's inequality --- see~\cite[Theorem 3.4]{Janson}.

\begin{fact}
\label{fact:dense-graphs}
Let $F$ be a graph with $\rho(F)>1/\alpha$. Then $\whp$ $\mathbf{G}_n$ does not contain a copy of $F$.
\end{fact}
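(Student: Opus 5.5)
The statement to prove is Fact~\ref{fact:dense-graphs}: a fixed graph $F$ with $\rho(F)>1/\alpha$ whp does not appear as a subgraph of $\mathbf{G}_n$. I would use the first moment method, applied directly to the number $X_F$ of copies of $F$. Since $F$ is fixed, set $v=|V(F)|$ and $e=|E(F)|$. The hypothesis $\rho(F)>1/\alpha$ means exactly $e/v>1/\alpha$, that is, $\alpha e>v$.

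First I would bound the expectation. A copy of $F$ is determined by an injective map $V(F)\to[n]$, up to automorphisms. Each potential copy is present with probability $p^{e}$, because the $e$ required edges appear independently. This gives
\[
\mathbb{E}X_F=\frac{(n)_v}{\mathrm{aut}(F)}\,p^{e}\leq n^{v}\cdot n^{-\alpha e}=n^{-(\alpha e-v)}.
\]
The exponent $\alpha e-v$ is a strictly positive constant depending only on $F$ and $\alpha$. Hence $\mathbb{E}X_F\to 0$.

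By Markov's inequality, $\mathbb{P}(X_F\geq 1)\leq\mathbb{E}X_F\to 0$. So whp $X_F=0$, which means $\mathbf{G}_n$ contains no copy of $F$.

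There is no real obstacle. The only subtlety is that we need the density of $F$ itself to exceed $1/\alpha$. The maximum density $\rho^{\max}$ would not do, since a sparse $F$ could contain a dense subgraph. The hypothesis as stated gives exactly what is needed.

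Irrationality of $\alpha$ is not used here; it only guarantees that $\alpha e\neq v$ for every $F$. If the statement were read with $\rho(F)$ replaced by $\rho^{\max}(F)$, the same argument would apply to a densest subgraph $H\subseteq F$ with $\rho(H)>1/\alpha$, since any copy of $F$ contains a copy of $H$.
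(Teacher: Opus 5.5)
Your proof is correct and matches the paper's approach: the paper dismisses this fact as a simple corollary of Markov's inequality (citing the standard first-moment result, Theorem 3.4 of Janson et al.). That is precisely the bound $\mathbb{E}X_F\leq n^{|V(F)|-\alpha|E(F)|}\to 0$ that you write out.
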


In what follows, when $G=\mathbf{G}_n$, we omit the superscript ${\mathbf{G}_n}$ and write $\closure_s(\overline{u})=\closure^{\mathbf{G}_n}_s(\overline{u})$.

\begin{definition}[Number of extensions] \label{def:numberofxextensions}
Fix a pair of graphs $F_1\subset F_2$ with $k:=|V(F_1)|$ and an integer $s>0$. Let $\overline{u}\in ([n])_{k}$ and  $U\subseteq \overline{u}$.

Let $X_{(F_1,F_2;s)}(\overline{u};U)$ be the number of $(F_1,F_2)$-extensions $F'_2$ of $\overline{u}$ in $\mathbf{G}_n$ such that $F'_2\setminus U$ does not have $s$-dense extensions $F'_3$ in $\mathbf{G}_n$ with $V(F'_3)\setminus V(F'_2)\neq\varnothing$ and 
$$
 E(F'_3)\cap((V(F'_2)\setminus \overline{u})\times(V(F'_3)\setminus V(F'_2)))\neq\varnothing.
 $$ 
\end{definition}

The following key result is an analog of Lemma \ref{lem:extensionsstabilizedense} to the case of sparse random graphs:
\begin{theorem}[Extension counts]\label{thm:extensionexpectation}
Let $0\leq k'\leq k$ be integers.
For any sparse $(F_1,F_2)$ and any $s\in\mathbb{Z}_{\geq 0}$ (here, $V(F_1)$ can be empty), we have that  
$$
\mu(F_1,F_2;s) := \mathbb{E}X_{(F_1,F_2;s)}(\overline{u};U) 
$$ 
does not depend on $\overline{u}\in ([n])_{|V(F_1)|}$ and $U\subseteq\overline{u}$ of size $k'$ and equals
\begin{multline}
(1+o(1))\cdot\frac{(v(F_1,F_2))!}{\mathrm{aut}(F_1,F_2)}\cdot{n-|V(F_1)|\choose v(F_1,F_2)}p^{e(F_1,F_2)}
=(1/\mathrm{aut}(F_1,F_2)+o(1))\cdot n^{v(F_1,F_2)-\alpha \cdot e(F_1,F_2)}
\label{eq:exetnsions-expectation-asymp}
\end{multline}
where $\mathrm{aut}(F_1,F_2)$ is the number of automorphisms of $F_2$ that preserve each vertex of $F_1$. Moreover, there exists $\varepsilon>0$ such that,
with probability $1-n^{-\omega(1)}$, every $\overline{u}\in ([n])_{|V(F_1)|}$ and every $U\subseteq\overline{u}$ of size $k'$ satisfies
$$
 |X_{(F_1,F_2;s)}(\overline{u};U)/\mu(F_1,F_2;s)-1|\leq n^{-\varepsilon}.
$$
\end{theorem}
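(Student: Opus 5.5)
We split the proof into the expectation asymptotics and the concentration statement. Let $v:=v(F_1,F_2)$ and $e:=e(F_1,F_2)$.

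\textbf{Expectation.} Write $X_{(F_1,F_2;s)}(\overline{u};U)=\sum_{\overline{w}}\mathbbm{1}_{A_{\overline{w}}}\mathbbm{1}_{B_{\overline{w}}}$. The sum runs over injective placements $\overline{w}$ of $V(F_2)\setminus V(F_1)$ into $[n]\setminus\overline{u}$, divided by $\mathrm{aut}(F_1,F_2)$. Here $A_{\overline{w}}$ is the event that the prescribed edges are present. $B_{\overline{w}}$ is the event that no $s$-dense extension of the copy minus $U$ attaches to the new vertices. By symmetry, $\mathbb{E}X$ depends only on $|U|=k'$, not on the particular $\overline{u}$ or $U$. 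The main term $\frac{v!}{\mathrm{aut}(F_1,F_2)}\binom{n-|V(F_1)|}{v}p^{e}$ counts $\mathbb{P}(A_{\overline{w}})$ summed over placements. It remains to show $\mathbb{P}(\overline{B_{\overline{w}}}\mid A_{\overline{w}})=o(1)$, uniformly in $\overline{w}$.

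Conditioned on $A_{\overline{w}}$, a violating $s$-dense extension $F'_3$ adds at most $s$ new vertices $T$ and $e_T$ edges with $e_T>v(T)/\alpha$. Fixing the isomorphism type of $(F'_2,F'_3)$ (finitely many types), the expected number of such $T$ is $O(n^{|T|}p^{e_T})=O(n^{|T|-\alpha e_T})$. Since $\alpha$ is irrational, $|T|-\alpha e_T<0$ is bounded away from $0$ over finitely many types, giving $n^{-\varepsilon_0}$. Harris/FKG is not needed: the new edges are disjoint from those of $F_2$ and independent, so the conditional expectation is exactly this bound. A union bound over types gives the claim. Finally, $\binom{n-k}{v}v!\sim n^v$, which yields~\eqref{eq:exetnsions-expectation-asymp}. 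The sparseness of $(F_1,F_2)$ together with irrationality ensures the exponent $v-\alpha e>0$, so $\mu\to\infty$ polynomially.

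\textbf{Concentration.} Following the Spencer counting-extensions approach, we prove an upper and a lower bound on $X$, each with error $n^{-\varepsilon}$ and failure probability $n^{-\omega(1)}$, so that a union bound over the $n^{O(1)}$ choices of $(\overline{u},U)$ finishes.

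For the \emph{lower tail}, apply Janson's inequality to the count $Y$ of all $(F_1,F_2)$-extensions. Because $(F_1,F_2)$ is sparse with irrational $\alpha$, every intermediate subextension $S$ has $v(F_1,S)-\alpha e(F_1,S)\ge\varepsilon_1>0$. This makes $\Delta/\mu^2=O(n^{-\varepsilon_1})$, so $\mathbb{P}(Y<(1-n^{-\varepsilon})\mu)\le\exp(-n^{\Omega(1)})$. We then subtract the extensions failing $B$. Their number is bounded $\whp$ by a polylogarithmic multiple of counts of the rigid/dense configurations. By Fact~\ref{fact:closuresize}-type arguments (Spencer's ``bounded number of disjoint dense extensions'' via Erd\H{o}s--Tetali/Kim--Vu) this is at most $n^{-\varepsilon}\mu$ with probability $1-n^{-\omega(1)}$.

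For the \emph{upper tail}, we use the standard disjoint-family (maximal disjoint copies) argument. If $X$ is large, either there are many vertex-disjoint extensions, which has a super-polynomially small probability by the Erd\H{o}s--Tetali inequality, or there is a large ``sunflower'' sharing a nonempty core outside $\overline{u}$. A large sunflower would produce, through its core, an $s$-dense extension of $F'_2\setminus U$ touching new vertices, forbidden by the indicator $B$ in the definition of $X$, or a subgraph of density $>1/\alpha$, excluded by Fact~\ref{fact:dense-graphs}.

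\textbf{Main obstacle.} The hardest step is obtaining the $n^{-\omega(1)}$ failure probability for the upper tail, uniformly over all tuples. It requires a careful Kim--Vu polynomial concentration or Spencer's maximal-disjoint-family argument, and that is where the modified count $X$ (excluding extensions with dense attachments) is exactly what makes the bounded-difference parameters small. I would try Kim--Vu first, bounding the expected partial derivatives $\mathbb{E}_{\ge1}$ by $n^{-\varepsilon}\mu$ using the sparseness of all subpairs.
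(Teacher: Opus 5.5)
Your expectation computation is correct, and so is the Janson lower tail for the count $X^{all}$ of \emph{all} $(F_1,F_2)$-extensions. In fact Kim--Vu applied to $X^{all}$ gives both tails at once, because sparseness alone makes every expected partial derivative at most $\mu n^{-\varepsilon_1}$. The restriction that defines $X$ plays no role in making those parameters small, contrary to your last paragraph; $X$ is not even a nonnegative polynomial. The upper tail of $X$ then follows from $X\le X^{all}$.

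Your disjoint-family/sunflower dichotomy, however, is false and should be dropped: large sunflowers with a nonempty core are typical and harmless. Take $F_1$ a single root $u$, $F_2$ a triangle through it, and $\alpha<1/2$. The triangles through $u$ sharing a fixed neighbour $x$ number about $np^2=n^{1-2\alpha}$. Each petal adds one vertex and two edges, so no subgraph of density above $1/\alpha$ and no dense extension arises. Moreover, a maximal vertex-disjoint family has size at most $\deg(u)/2\approx n^{1-\alpha}/2\ll\mu\approx n^{2-3\alpha}/2$.

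The genuine gap is the step you only assert: that with probability $1-n^{-\omega(1)}$ the number of copies that \emph{do} carry an attached $s$-dense extension is at most $n^{-\varepsilon}\mu$. This cannot come from excluding dense configurations. Fact~\ref{fact:closuresize} holds only w.h.p., a fixed $\overline{u}$ has a dense extension with merely polynomially small probability, and Markov's inequality on the expected number of bad copies gives only $n^{-\Theta(1)}$ failure probability.

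The paper handles this as follows. For each bad type $F_3$ (with $(F_2,F_3)$ $s$-dense and an edge between $V(F_3)\setminus V(F_2)$ and $V(F_2)\setminus V(F_1)$), it takes an inclusion-minimal $H'$ with $(H',F_3)$ sparse, so that $(F_1,H')$ is dense. It then proves $v(H',F_3)-\alpha e(H',F_3)<v(F_1,F_2)-\alpha e(F_1,F_2)$. This is exactly where the attachment edge is needed: without it, equality holds for $H'=F_3\upharpoonright(V(F_1)\cup(V(F_3)\setminus V(F_2)))$, and indeed a single dense extension of $\overline{u}$ would spoil every copy. Next, $\overline{u}$ has at most $n^{\varepsilon''}$ $(F_1,H')$-extensions with failure probability $n^{-\omega(1)}$, since many of them would form a large graph of density bounded above $1/\alpha$. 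Finally, the sparse $(H',F_3)$-extensions of each are bounded by concentration.

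Equivalently, Kim--Vu applied to $X^{all}_{(F_1,F_3)}$ works once you prove the same strict inequality for every intermediate $H$. Your ``polylogarithmic multiple of counts of the rigid/dense configurations'' gestures at this, but it supplies neither the dense-then-sparse reordering nor the inequality, which are the heart of the argument. The upper tail you flag as the main obstacle is, by contrast, routine.
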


\begin{proof}
For a pair of graphs $F_1\subset F_2$, denote
\begin{align*}
 \rho(F_1,F_2)=\frac{e(F_1,F_2)}{v(F_1,F_2)},\quad\text{ and }\quad 
 \rho^{\max}(F_1,F_2)=\max_{V(F_1)\subset S\subseteq V(F_2)}\rho(F_1,F_2\upharpoonright S).
\end{align*}
Let us say that a pair of graphs $(F_1,F_2)$ is {\it strictly balanced}, if $F_1\subset F_2$ and, for every $V(F_1)\subset S\subset V(F_2)$, 
$$
 \rho(F_1,F_2\upharpoonright S)<\rho(F_1,F_2)=\rho^{\max}(F_1,F_2).
$$
Fix any pair $(F_1,F_2)$. Let us observe that there exists a sequence $F_1=H_1\subset H_2\subset\ldots\subset H_r=F_2$ such that every pair $(H_{i-1},H_i)$ is strictly balanced, and, for every $i$, $\rho(H_{i-1},H_i)\geq\rho(H_i,H_{i+1})$. Indeed, for every $i$, assuming that $H_{i-1}$ has been already defined, let $H_i$ be an inclusion-minimal graph with the maximum density $\rho(H_{i-1},H_i)$ such that  $H_{i-1}\subset H_i\subseteq F_2$. Then, clearly, all pairs $(H_{i-1},H_i)$ are strictly balanced. Assume for some $i$, $\rho(H_{i-1},H_i)<\rho(H_i,H_{i+1})$. Then 
\begin{align*}
\rho(H_{i+1},H_{i-1})=\frac{e(H_{i+1},H_i)+e(H_i,H_{i-1})}{v(H_{i+1},H_i)+v(H_i,H_{i-1})}
>\min\{\rho(H_{i+1},H_i),\rho(H_i,H_{i-1})\}=\rho(H_i,H_{i-1})
\end{align*}
--- a contradiction with the definition of $H_i$.

For a pair of graphs $F_1\subset F_2$ and $\overline{u}\in([n])_{|V(F_1)|}$, let  $X^{all}_{(F_1,F_2)}(\overline{u})$ 
denote the number of $(F_1,F_2)$-extensions of $\overline{u}$ in $\mathbf{G}_n$. Clearly, $\mathbb{E}X^{all}_{(F_1,F_2)}(\overline{u})$ does not depend on $\overline{u}$ and equals
$$
 \frac{(v(F_1,F_2))!}{\mathrm{aut}(F_1,F_2)}{n-k\choose v(F_1,F_2)}p^{e(F_1,F_2)} 
$$
Let $\mu^{all}(F_1,F_2)$ denote the quantity above. 

We will rely on a slightly refined version of~\cite[Theorem 5]{Spencer_count}, which is presented below.\footnote{In the original version, it is stated that the event $(1-\varepsilon)\mu^{all}(F_1,F_2)<X_{(F_1,F_2)}(\overline{u})<(1+\varepsilon)\mu^{all}(F_1,F_2)$ holds with probability $1-n^{-\omega(1)}$ for every $\varepsilon>0$. This weaker version holds true for $p$ that can be a polylogarithmic factor close to the threshold probability $n^{-1/\rho(F_1,F_2)}$. In our case, $p$ is $n^{\Theta(1)}$-far from the treshold and exactly the same proof gives the refined result. Alternatively, Fact~\ref{fact:ext-count} follows directly from the Kim-Vu concentration inequality~\cite{KimVu}.}

\begin{fact}
\label{fact:ext-count}
Let $(F_1,F_2)$ be a strictly balanced pair with $\rho(F_1,F_2)<1/\alpha$. Let $k:=|V(F_1)|$. There exists $\varepsilon>0$ such that, for every $\overline{u}\in([n])_k$, with probability $1-n^{-\omega(1)}$, 
\begin{equation}
\label{eq:spencer_concentration}
(1-n^{-\varepsilon})\mu^{all}(F_1,F_2)<X^{all}_{(F_1,F_2)}(\overline{u})<(1+n^{-\varepsilon})\mu^{all}(F_1,F_2).
\end{equation}
\end{fact}

Now, let $(F_1,F_2)$ be sparse, $|V(F_1)|=k$, and let $F_1=H_1\subset H_2\subset\ldots\subset H_r=F_2$ be a sequence of graphs   
as above. Since $(F_1,F_2)$ is sparse, we get that, for every $i$, 
$$
\rho(H_{i-1},H_i)\leq\rho(H_1,H_2)<\frac{1}{\alpha}.
$$
By Fact~\ref{fact:ext-count} applied $r-1$ times, we get that~\eqref{eq:spencer_concentration} holds with probability $1-n^{-\omega(1)}$, for every $\overline{u}\in([n])_k$.

We have proved that the total number of all $(F_1,F_2)$-exten\-sions satisfies the conclusion of Theorem~\ref{thm:extensionexpectation}: $\mathbb{E}X^{all}_{(F_1,F_2)}(\overline{u})$ does not depend on $\overline{u}$, equals~\eqref{eq:exetnsions-expectation-asymp}, and $X^{all}_{(F_1,F_2)}(\overline{u})$ has the desired concentration property~\eqref{eq:spencer_concentration}. In order to complete the proof, we need to prove that $\mu(F_1,F_2;s)=(1+o(1))\mu^{all}(F_1,F_2)$ and that the number of $(F_1,F_2)$-extensions that have $s$-dense extensions is at most $\mu^{all}(F_1,F_2)\cdot n^{-\Theta(1)}$, with probability $1-n^{-\omega(1)}$.

Let $F_3$ be a graph such that the inclusion $F_2\subset F_3$ is proper, $(F_2,F_3)$ is $s$-dense, and there exists at least one edge between $V(F_3)\setminus V(F_2)$ and $V(F_2)\setminus V(F_1)$ in $F_3$. 
 In order to complete the proof, it suffices to prove that, for some $\varepsilon'>0$ and for every $\overline{u}\in([n])_k$, with probability $1-n^{-\omega(1)}$, 
\begin{equation}
\label{eq:ext-count-dense-are-rare}
X^{all}_{(F_1,F_3)}(\overline{u})<\mu^{all}(F_1,F_2)/n^{\varepsilon'}.
\end{equation}
We note that 
$$
 v(F_1,F_3)-\alpha\cdot e(F_1,F_3)=(v(F_1,F_2)-\alpha\cdot e(F_1,F_2))+
 (v(F_2,F_3)-\alpha\cdot e(F_2,F_3))<v(F_1,F_2)-\alpha\cdot e(F_1,F_2).
$$
Let $F_1=H'_1\subset H'_2\subset\ldots\subset H'_{r'}=F_3$ be a sequence of strictly balanced extensions such that $\rho(H'_{i-1},H'_i)\geq\rho(H'_i,H'_{i+1})$ for all $i$.  If $\rho(H'_1,H'_2)<1/\alpha$, the result follows from Fact~\ref{fact:ext-count}.

Assume $\rho(H'_1,H'_2)>1/\alpha$. Let $H'$ be an inclusion-minimal graph such that $F_1\subset H'\subseteq F_3$ and the pair $(H',F_3)$ is sparse. Note that the pair $(F_1,H')$ is dense since, otherwise, for an inclusion-maximal subgraph $F_1\subseteq H''\subset H'$ such that $\rho(H'',H')<1/\alpha$, the pair $(H'',H')$ is sparse, meaning that the pair $(H'',F_3)$ is sparse as well --- a contradiction with minimality of $H'$. Then
\begin{align*}
 v(H',F_3)-\alpha e(H',F_3)
 &=(v(H',H'\cup F_2)-\alpha e(H',H'\cup F_2))
 +(v(H'\cup F_2,F_3)-\alpha e(H'\cup F_2,F_3))\\
 &\leq
 (v(H'\cap F_2,F_2)-\alpha e(H'\cap F_2,F_2))+(v(H'\cup F_2,F_3)-\alpha e(H'\cup F_2,F_3)).
\end{align*}
Since $(F_2,F_3)$ is dense, we get that $v(H'\cup F_2,F_3)-\alpha\cdot e(H'\cup F_2,F_3)\leq 0$ and the equality is achieved only when $H'\cup F_2=F_3$. Since $(F_1,F_2)$ is sparse, we get
\begin{align*}
 v(H',F_3)-\alpha\cdot e(H',F_3)&\leq 
 v(H'\cap F_2,F_2)-\alpha\cdot e(H'\cap F_2,F_2)\\
 &=
 (v(F_1,F_2)-\alpha\cdot e(F_1,F_2))-(v(F_1,H'\cap F_2)-\alpha\cdot e(F_1,H'\cap F_2))\\
 &\leq v(F_1,F_2)-\alpha\cdot e(F_1,F_2)
\end{align*}
where the second equality is achieved only when $F_1=H'\cap F_2$. So, we have both equalities only when $H'=F_3\upharpoonright (V(F_1)\cup V(F_3\setminus V(F_2))$. But then the equality $v(H',F_3)-\alpha\cdot e(H',F_3)=v(F_1,F_2)-\alpha\cdot e(F_1,F_2)$ is impossible by the assumption that there are edges in $F_3$ between $F_3\setminus F_2$ and $F_2\setminus F_1$, since $v(F_1,F_2)=v(H',F_3)$. 

We conclude that
\begin{equation}
\label{eq:density-comparison}
 v(H',F_3)-\alpha\cdot e(H',F_3)< v(F_1,F_2)-\alpha\cdot e(F_1,F_2).
\end{equation}

Let $\varepsilon''>0$. Recall, from the comments above, after the definition of $H'$, that the pair $(F_1,H')$ is dense. If $\overline{u}$ has at least $n^{\varepsilon''}$ $(F_1,H')$-extensions, then, for a large enough $C>0$, we can find at least $n^{\varepsilon''/C}$ extensions $G'_1,\ldots,G'_m$ of $\overline{u}$, such that each $G'_i$ is not entirely inside $G'_1\cup\ldots\cup G'_{i-1}$. Then, for a small enough constant $\delta=\delta(F_1,H')>0$ that does not depend on $\varepsilon''$ (so, $\varepsilon''$ can be chosen significantly smaller than $\delta$), the graph $G'=G'_1\cup\ldots\cup G'_m$ has $v=\Theta(n^{\varepsilon''/C})$ vertices and at least $(v-k)(1/\alpha+\delta)$ edges. By the union bound, the probability of this event is at most
\begin{align*}
\sum_{v=\Theta(n^{\varepsilon''/C})}{n\choose v}{v^2\choose (v-k)(1/\alpha+\delta)}n^{-\alpha(v-k)(1/\alpha+\delta)}
 &\leq \sum_{v=\Theta(n^{\varepsilon''/C})}n^{v-(1-\varepsilon''/\alpha)((1+\delta\alpha)v-k(1+\delta\alpha))}\\
 &=n^{-\omega(1)}.
\end{align*}
This completes the proof of~\eqref{eq:ext-count-dense-are-rare} in the case $H'=F_3$. 
 If $H'\subset F_3$ is a proper inclusion, we let $\mathcal{E}$ be the set of all $(F_1,H')$-extensions of $\overline{u}$. We may assume that $|\mathcal{E}|<n^{\varepsilon''}$. Since $(H',F_3)$ is sparse, by Fact~\ref{fact:ext-count}, with probability $1-n^{-\omega(1)}$, for every $G'\in\mathcal{E}$, the number of $(H',F_3)$-extensions of $G'$ is $O(n^{v(H',F_3)-\alpha\cdot e(H',F_3)})$. Due to~\eqref{eq:density-comparison}, there exists a postive constant $\delta'\gg\varepsilon''$ such that, with probability $1-n^{-\omega(1)}$, the total number of $(F_1,F_3)$-extensions of $\overline{u}$, is at most
\begin{align*} O\left(\sum_{H'\in\mathcal{E}'}n^{v(H',F_3)-\alpha\cdot e(H',F_3)}\right)=O\left(n^{\varepsilon''}\cdot n^{v(F_1,F_2)-\alpha\cdot e(F_1,F_2)-\delta'}\right)
<\mu^{all}(F_1,F_2)\cdot n^{-\delta'/2},
\end{align*}
completing the proof.

\end{proof}

In the case $V(F_1)=\varnothing$, Theorem ~\ref{thm:extensionexpectation} implies that the number of graphs isomorphic to $F$ with maximum density less than $1/\alpha$ that do not have $s$-dense extensions 
  satisfies the same concentration inequality.
\begin{corollary}
\label{cor:sparse_graphs}
For any graph $F$ with $\rho^{\max}(F)<1/\alpha$ and any $s\in\mathbb{Z}_{\geq 0}$,  the number of graphs isomorphic to $F$ that do not have $s$-dense extensions is concentrated around its expectation, and its expectation  is
$$
 \frac{(|V(F)
|)!}{\mathrm{aut}(F)}\cdot{n\choose |V(F)
|}p^{|E(F)|}
=(1/\mathrm{aut}(F)+o(1))\cdot n^{|V(F)|-\alpha \cdot |E(F)|},
$$
where $\mathrm{aut}(F)$ is the number of automorphisms of $F$. 
\end{corollary}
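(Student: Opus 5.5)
The plan is to obtain Corollary~\ref{cor:sparse_graphs} as the special case $V(F_1)=\varnothing$, $F_2=F$, $k=k'=0$, $\overline{u}=U=\varnothing$ of Theorem~\ref{thm:extensionexpectation}. Three things need checking: that the hypotheses of the theorem hold, that the extension count it controls is the quantity in the corollary, and that the constant matches.

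First, the hypothesis. The theorem requires $(\varnothing,F)$ to be a sparse pair, i.e.\ for every nonempty $S\subseteq V(F)$ we need $e(\varnothing,F\upharpoonright S)<\frac1\alpha v(\varnothing,F\upharpoonright S)$. This is exactly $\rho(F\upharpoonright S)<1/\alpha$, and it follows from $\rho^{\max}(F)<1/\alpha$, since the maximum density bounds the density of every induced subgraph.

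Second, the count. With $\overline{u}=\varnothing$ and $U=\varnothing$, an $(\varnothing,F)$-extension of the empty tuple is a copy of $F$ in $\mathbf{G}_n$, i.e.\ a subgraph isomorphic to $F$. The extra condition in Definition~\ref{def:numberofxextensions} reads: $F'_2$ has no $s$-dense extension $F'_3$ that adds new vertices and has an edge between $V(F'_2)\setminus\varnothing=V(F'_2)$ and the new vertices. Any $s$-dense extension with new vertices must have such an edge. Otherwise the new vertices $S'$ would carry all $e(F'_2,F'_3)$ edges among themselves, and taking $S=V(F'_2)$ in the density condition gives $e>\frac1\alpha|S'|$ on a graph with $|S'|\le s$ vertices. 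I would handle that residual possibility in one of two ways: either note it is vacuous because the definition literally quantifies over such edges, or exclude it \whp\ by Fact~\ref{fact:dense-graphs}. The cleanest route is simply to interpret ``do not have $s$-dense extensions'' as in Definition~\ref{def:numberofxextensions}, so that $X_{(\varnothing,F;s)}(\varnothing;\varnothing)$ is precisely the counted quantity. One must also check whether the definition counts labelled embeddings or subgraphs; the $\mathrm{aut}$ normalisation indicates subgraphs, consistent with the corollary.

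Third, the formula. Plugging $|V(F_1)|=0$, $v(F_1,F_2)=|V(F)|$, $e(F_1,F_2)=|E(F)|$ and $\mathrm{aut}(\varnothing,F)=\mathrm{aut}(F)$ into \eqref{eq:exetnsions-expectation-asymp} yields the stated expression. Concentration follows from the theorem's high-probability bound $|X/\mu-1|\le n^{-\varepsilon}$. Because $\mu=\Theta(n^{|V(F)|-\alpha|E(F)|})$ and $X\le n^{|V(F)|}$ deterministically, the failure probability $n^{-\omega(1)}$ also gives $L_p$ concentration in the sense of Definition~\ref{def:concentrates}, by the same argument as in the proof of Theorems~\ref{th:dense-concentration},~\ref{th:dense-limits}. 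The positivity $|V(F)|-\alpha|E(F)|>0$ comes from $\rho(F)<1/\alpha$ and ensures $\mu\to\infty$, though this is not even needed.

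The main obstacle is only the bookkeeping in the second step, namely matching the edge-condition in the definition of $X$ with ``no $s$-dense extensions''. Everything else is direct substitution.
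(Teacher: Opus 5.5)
Your proposal is correct and follows the paper's proof. Like the paper, you specialise Theorem~\ref{thm:extensionexpectation} to $V(F_1)=\varnothing$, $F_2=F$, and check that $(\varnothing,F)$ is a sparse pair because $\rho(F\upharpoonright S)\le\rho^{\max}(F)<1/\alpha$ for every nonempty $S\subseteq V(F)$. Your extra bookkeeping goes beyond what the paper writes but is consistent with it: you match the count in Definition~\ref{def:numberofxextensions} with the corollary's quantity, note that the theorem gives the expectation only up to a $(1+o(1))$ factor, and upgrade the $n^{-\omega(1)}$ failure probability to $L_p$ concentration.
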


\begin{proof}
It suffices to show that the pair $(\varnothing,F)$ is sparse, i.e., for every $S\subseteq V(F)$, we have that $|E(F \upharpoonright S)|<\frac{1}{\alpha}|S|$. This immediately follows from 
$$
\frac{1}{\alpha}>\rho^{\max}(F)\geq\rho(F\upharpoonright S)=\frac{|E(F\upharpoonright S)}{|S|}.
$$
\end{proof}

\begin{definition}[Closure types]\label{def:closuretype}
An {\it $s$-closure type} is a MRG  $F[(1,\ldots,k)]$ for some positive integer $k$, where $F=\closure_s^{F}([k])$.

Let $k$ be a positive integer, let $G$ be a graph on $[n]$, let $\overline{u}=(u_1,\ldots,u_k)\in([n])_k$, and let $F[(1,\ldots,k)]$ be an $s$-closure type. We say that $\overline{u}$ {\it has $s$-closure type $F[(1,\ldots,k)]$ in $G$}, if MRGs $F[(1,\ldots,k)]$ and $\closure_s^G{(\overline{u})}[\overline{u}]$ are isomorphic.
\end{definition}

In other words, a closure type is the isomorphism class of a MRG $F[(1,\ldots,k)]$.

\begin{definition} [$\closuretypes^{(s)}$ and $\closuretypes^{(s)}_k$]\label{def:notsodenseclosures}
 Let $\closuretypes^{(s)}$ be the set of {\it all}
 \footnote{Every isomorphism class $F[\overline{x}]$ appears exactly once in $\closuretypes^{(s)}$.} 
 $s$-closure types $F[\overline{x}]$ such that 
 \begin{compactitem}
 \item $F$ has maximum density strictly less than $1/\alpha$ and 
 \item $|V(F)|\leq \ell(|V(F)|,s)$, where $\ell$ is the function from Fact ~\ref{fact:closuresize}.
 \end{compactitem}
Let $\closuretypeof^G_s:\cup_k([n])_k\to\closuretypes^{(s)}$ map tuples to their $s$-closure types in $G$. We let $\closuretypes^{(s)}=\cup_{k=0}^{\infty}\closuretypes^{(s)}_k$, where $\closuretypes^{(s)}_k$ is the set of all $s$-closure types $F[(1,\ldots,k)]$. 
\end{definition}

For example, let $\alpha\in(1/2,1)$, let $G$ be a graph with vertices $u_1,u_2,v$ such that $v$ is a common neighbour of $u_1,u_2$, and any vertex outside of $\{u_1,u_2,v\}$ has at most one neighbour in this set. Then $\closuretypeof_1^G(u_1,u_2)=F[(1,2)]$, and the graph $F$ has $V(F)=\{1,2,x\}$, where $x$ is adjacent to both 1 and 2.

From Corollary~\ref{cor:sparse_graphs} it follows that for every $s$ and every $\mathbf{t}\in\closuretypes^{(s)}_k$, $\whp$ there exists $\overline{u}\in([n])_k$ with $s$-closure type $\mathbf{t}$ in $\mathbf{G}_n$. On the other hand, by Fact ~\ref{fact:closuresize} and Fact~\ref{fact:dense-graphs}, for every positive integer $k$ and $s$, $\whp$ every $k$-tuple $\overline{u}\in([n])_k$ has $s$-closure type that belongs to $\closuretypes_k^{(s)}$. In other words, $\closuretypes_k^{(s)}$ is exactly the set of all $s$-closure types achievable by $k$-tuples in $\mathbf{G}_n$ $\whp$ In what follows, when $G=\mathbf{G}_n$, we omit the superscript ${\mathbf{G}_n}$ and write $\closuretypeof_s=\closuretypeof^{\mathbf{G}_n}_s$.

We extend the definition of closure types from tuples of different vertices to $\overline{u}\in[n]^k$ in a natural way.

In contrast to the dense case, the notion of type depends on a number $s$, and
we need to determine $s$ from the depth of the term. Our next definition introduces a sequence of values of $s$, where $s_i$ corresponds to a term of depth $D-i$, for some large integer parameter $D$ that we fix in advance. 
\begin{definition}[Sufficiently rapid sequence] \label{def:dfrapid}
Let $D$ and $W$ be positive integers. A \emph{$(D,W)$-rapid} sequence of integers  is  a $s_0\geq s_1\geq\ldots\geq s_D=0$ such that 
$$
 s_{i-1}\geq \ell(W,s_i)
$$ 
for all $i\in[D]$.
\end{definition}

We now fix arbitrary positive integers $D$ and $W$. Let $s_0 \ldots s_D$ be a $(D,W)$-rapid sequence. 
\begin{definition}[$\mathcal{L}$] 
We let $\mathcal{L}$ be the event from Fact~\ref{fact:closuresize}: for every $k\leq F$, every $s\leq s_0$, and every $\overline{u}\in([n])_k$, $|V(\mathrm{cl}_s(\overline{u}))|\leq\ell(k,s)$. Due to Fact~\ref{fact:closuresize}, $\mathcal{L}$ holds $\whp$
\end{definition}

\begin{lemma}[Inductive Term Approximation with $\Sigma$: sparse case] \label{lem:sparse_main}
Let $\tau \in \Agglang_{\functionclassrellip}$ be a term of depth $d\leq D$ with $k$ free variables and at most $W$ variables in total and let $s_0\geq s_1\geq\ldots\geq s_D=0$ be a $(D, W)$-rapid sequence. There exist constants $\varepsilon,K>0$ and maps $\varphi_n:\closuretypes^{(s_{D-d})}_k\to \mathbb{R}_{\geq 0}$ for every $n \in \nats$,  such that, for every $\overline{u}\in[n]^k$ and every $\mathbf{t}\in\closuretypes_k^{(s_{D-d})}$, we have 
$$
\mathbb{P}(n^{-K}\leq\tau(\overline{u})\leq n^{K}\mid\tau(\overline{u})\neq 0)=1
$$ 
and one of the following two assertions holds:
\begin{compactitem}
\item $\varphi_n(\mathbf{t})\equiv\mathrm{0}$ and $\tau$ is typically zero for tuples of type $\mathbf{t}$:  
 $$
 \mathbb{P}(\tau(\overline{u})=\varphi_n(\mathbf{t})=0\mid\mathcal{L}\wedge\{\closuretypeof_{s_{D-d}}(\overline{u})=\mathbf{t}\})
 =1-n^{-\omega(1)},
 $$ 
 \item $\varphi_n(\mathbf{t})>n^{-K}$, and $\tau$ is concentrated around $\varphi_n(\mathbf{t})$ for tuples of type $\mathbf{t}$:
\begin{align}
\mathbb{P}\left(|\tau(\overline{u})-\varphi_n(\mathbf{t})|>n^{-\varepsilon} \varphi_n(\mathbf{t})\mid \mathcal{L}\wedge\{\closuretypeof_{s_{D-d}}(\overline{u}) =\mathbf{t}\}\right)
=n^{-\omega(1)}.
\label{eq:concentratio-sparse-main}
\end{align}

\end{compactitem}
Moreover, if $\tau \in \Agglang_{\functionclasspoly}$, and $\varphi_n(\mathbf{t})>0$, then, for some positive $c=c(\mathbf{t})$ and real $\gamma=\gamma(\mathbf{t})$, we have 
$$
\varphi_n(\mathbf{t})=(c+o(1))n^{\gamma}.
$$
\end{lemma}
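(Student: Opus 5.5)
We will prove Lemma~\ref{lem:sparse_main} by induction on the term structure, exactly mirroring the proof of Lemma~\ref{lem:dense_main}, with atomic types replaced by $s_{D-d}$-closure types and with Theorem~\ref{thm:extensionexpectation} playing the role of Lemma~\ref{lem:extensionsstabilizedense}. The invariant is that a term of depth $d$ is approximated by a function of the $s_{D-d}$-closure type of its free variables. Since closure types at a larger index $s$ refine those at a smaller one (by Fact~\ref{fact:closure-inclusion}, $\closure_{s'}(\overline{u})\subseteq\closure_s(\overline{u})$ for $s'\le s$, and the smaller closure is determined inside the larger one when the latter has at most $\ell$ vertices, by Fact~\ref{fact:closure-intersection}), a term of depth $d'<d$ approximated via $s_{D-d'}$-types is also approximated via $s_{D-d}$-types, because $s_{D-d}\ge s_{D-d'}$. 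We always condition on $\mathcal{L}$, so all closures have bounded size.

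\textbf{Atomic terms and function application.} Constants, $\mathrm{E}(u_1,u_2)$ and $u_1=u_2$ are determined by the closure type, since it contains $\mathbf{G}_n\upharpoonright\overline{u}$. For $f(\tau_1,\dots,\tau_m)$, we lift each $\varphi^i_n$ to the common type of $\overline{u}$ as just described. We then apply Theorem~\ref{cl:function-concentration} with $\delta=n^{-\omega(1)}$ and $\delta'=n^{-\min_i\varepsilon_i}$, which yields $\varepsilon$ slightly smaller than $\min_i\varepsilon_i$ (absorbing $a_f$). The asymptotically polynomial conclusion follows verbatim from the dense argument. The bound $n^{-K}\le\tau\le n^K$ follows from Claim~\ref{cl:poly-bound}.

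\textbf{Summation and $\supl$.} Let $\tau(\overline{u})=\sum_v\tau'(\overline{u},v)$ with $\tau'$ of depth $d-1$, approximated on $s':=s_{D-d+1}$-types of $(\overline{u},v)$. Write $s:=s_{D-d}\ge\ell(W,s')$ and fix the $s$-closure type $\mathbf{t}_0$ of $\overline{u}$. Split the vertices $v$ into two groups.
\begin{itemize}
\item[(i)] Vertices $v$ inside $\closure_s(\overline{u})$, of which there are boundedly many. For such $v$, the closure $\closure_{s'}(\overline{u},v)$ lies inside $\closure_s(\overline{u})$ by Fact~\ref{fact:closure-intersection} and rapidity. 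Hence their $s'$-types are determined by $\mathbf{t}_0$, and this group is handled like a function application.
\item[(ii)] Vertices $v$ outside $\closure_s(\overline{u})$. For such $v$, the closure $\closure_{s'}(\overline{u},v)$ decomposes as $\closure_s(\overline{u})$ plus a sparse extension $F_2$ containing $v$, with no further $s'$-dense extensions attached.
\end{itemize}
The number of $v$ realising a given $s'$-type $\mathbf{t}$ in group (ii) is, up to a bounded multiplicity, an extension count $X_{(F_1,F_2;s')}$ with $F_1=\closure_s(\overline{u})$. By Theorem~\ref{thm:extensionexpectation}, this count is concentrated within a factor $1\pm n^{-\varepsilon}$ around $\mu=(c+o(1))n^{v-\alpha e}$, with probability $1-n^{-\omega(1)}$.

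We set $\varphi_n(\mathbf{t}_0)$ to be the sum of the two contributions, namely $\sum\mu\,\varphi'_n$ over the types in group (ii) plus the inside terms from group (i). The error estimate is the same union-bound computation as in the dense case. The total relative error is $n^{-\min(\varepsilon,\varepsilon')}$, and the union bound over the $n^{O(1)}$ choices of $v$ is harmless against the $n^{-\omega(1)}$ failure probabilities. If every contribution is typically zero, we set $\varphi_n\equiv 0$. A sum of terms of the form $(c+o(1))n^{\gamma}$ is again of this form, which gives the asymptotically polynomial conclusion.

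For $\supl$, we take $\varphi_n(\mathbf{t}_0)$ to be the maximum of $\varphi'_n(\mathbf{t})$ over the $s'$-types $\mathbf{t}$ that are realised: all types in group (i), and those group (ii) types whose extension count has $\mu\to\infty$. Each realised type is present with probability $1-n^{-\omega(1)}$ by Theorem~\ref{thm:extensionexpectation}. Types with $\mu\to 0$ cannot occur, because $(F_1,F_2)$ is sparse, so $v-\alpha e>0$ and $\mu\to\infty$ always. Types that are not of the above form do not occur under $\mathcal{L}$, since they would violate closedness of $\closure_s(\overline{u})$.

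\textbf{Main obstacle.} The hardest step is the combinatorial identification in the summation case. We must show that, for $v$ outside $\closure_s(\overline{u})$, the $s'$-closure type of $(\overline{u},v)$ corresponds exactly to a sparse pair $(\closure_s(\overline{u}),F_2)$ together with the side condition ``no $s'$-dense extension'' captured by the set $U$ in Definition~\ref{def:numberofxextensions}. We must also show that this bijection holds with the correct multiplicity given by automorphisms. Rapidity, $s\ge\ell(W,s')$, is exactly what ensures that dense extensions of $(\overline{u},v)$ meeting $\closure_s(\overline{u})$ stay inside it. We would verify this first, using Facts~\ref{fact:dense-sub-extension} and~\ref{fact:closure-intersection}.
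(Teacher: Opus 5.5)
Your proposal is correct and follows essentially the same route as the paper. The paper likewise runs induction on terms with $s_{D-d}$-closure types in place of atomic types, and handles function application via Theorem~\ref{cl:function-concentration}. For $\Sigma$ and $\mathrm{Max}$ it also splits the vertices $v$ into two groups: those inside $\mathrm{cl}_{s_{D-d}}(\overline{u})$, whose types are fixed by $\mathbf{t}_0$ via Fact~\ref{fact:closure-intersection} and rapidity, and those outside it, which yield sparse extensions by Fact~\ref{fact:pair-of-closures-sparse} and are counted, and shown all achievable, via Theorem~\ref{thm:extensionexpectation} with the automorphism multiplicity $\eta_F$. The ``main obstacle'' you isolate is exactly the identification the paper carries out. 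One small correction: $\mathrm{cl}_{s_{D-d+1}}(\overline{u},v)$ need not contain $\mathrm{cl}_{s_{D-d}}(\overline{u})$, so the sparse extension should be taken to be the union of the two closures, as the paper does.
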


The derivation of Theorem~\ref{th:sparse-concentration-and-limits} from Lemma~\ref{lem:sparse_main} is verbatim to the proof of Theorems~\ref{th:dense-concentration},~\ref{th:dense-limits}, therefore we omit it.

In order to prove Lemma~\ref{lem:sparse_main}, we need the following assertion, which was also used in~\cite{shelahspencersparse} but was not explicitly stated in that paper. For the sake of completeness, we prove this fact below.
\begin{fact}
\label{fact:pair-of-closures-sparse}
Let $s_1,s_2$ be non-negative integers, let $G$ be a graph on a vertex set $V$, and let $U_1\subset U_2\subseteq V$ be such that 
\begin{itemize}
\item $U_2\not\subset\closure^G_{s_1}(U_1)$,
\item the pair $(U_1,G\upharpoonright U_2)$ is sparse,
\item $|\mathrm{cl}^G_{s_2}(U_2)|\leq s_1$.
\end{itemize}
Then the pair
$(\closure^G_{s_1}(U_1),\closure^G_{s_2}(U_2)\cup \closure^G_{s_1}(U_1))$ is sparse.
\end{fact}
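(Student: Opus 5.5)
The plan is to argue by contradiction, using only the defining property of $\closure^G_{s_1}(U_1)$, namely that it has no $s_1$-dense extension in $G$, together with the size bound $|\closure^G_{s_2}(U_2)|\leq s_1$. Write $C_1:=\closure^G_{s_1}(U_1)$ and $C_2:=\closure^G_{s_2}(U_2)$, and set
$$
f(A,B):=v(A,B)-\alpha\, e(A,B)\qquad\text{for } A\subseteq B\subseteq V .
$$
Here $B$ is identified with $G\upharpoonright B$. In this notation, $(C_1,C_1\cup C_2)$ is sparse if and only if $f(C_1,S)>0$ for every $V(C_1)\subset S\subseteq V(C_1)\cup V(C_2)$. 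The first two hypotheses serve only to make the statement non-vacuous: $U_2\not\subset C_1$ together with $U_2\subseteq C_2$ gives $C_2\not\subseteq C_1$, so the pair is a proper extension. The sparseness of $(U_1,G\upharpoonright U_2)$ is not needed for the argument I have in mind. If it turns out to matter for a degenerate case, I would use it there.

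Two facts about $f$ drive the argument. First, $f$ is additive along chains: $f(A,C)=f(A,B)+f(B,C)$ for $A\subseteq B\subseteq C$. Second, since $\alpha$ is irrational, $f(A,B)=0$ forces $v(A,B)=e(A,B)=0$, that is $A=B$. Hence it suffices to rule out a set $S$ with $V(C_1)\subset S\subseteq V(C_1)\cup V(C_2)$ and $f(C_1,S)<0$.

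Suppose such an $S$ exists. Among all $S''$ with $V(C_1)\subset S''\subseteq S$, choose one minimising $f(C_1,S'')$, and among the minimisers choose one of minimum size. Then $f(C_1,S'')\le f(C_1,S)<0$. For any $V(C_1)\subseteq R\subset S''$ we have $f(C_1,R)>f(C_1,S'')$. This holds for $R=V(C_1)$ because $f(C_1,C_1)=0$. For $R\neq V(C_1)$, equality would contradict the minimal size of $S''$, and a smaller value would contradict minimality of $f$. By additivity, $f(R,S'')=f(C_1,S'')-f(C_1,R)<0$ for every such $R$, which says exactly that $e(R,G\upharpoonright S'')>\frac1\alpha v(R,G\upharpoonright S'')$. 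Thus $(C_1,G\upharpoonright S'')$ is a dense pair.

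It remains to bound its size: $v(C_1,S'')\le |S\setminus V(C_1)|\le |V(C_2)|\le s_1$. So $(C_1,G\upharpoonright S'')$ is an $s_1$-dense extension of $C_1$ in $G$, contradicting the second bullet of Definition~\ref{def:closure} for $C_1=\closure^G_{s_1}(U_1)$. I expect the only delicate point to be the careful choice of $S''$, minimal value first and then minimal size, so that the dense inequality holds strictly for every intermediate $R$, including $R=V(C_1)$. Irrationality of $\alpha$ is what excludes the boundary case $f=0$.
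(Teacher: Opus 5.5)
Your proof is correct and is essentially the paper's argument: assume a violating $S$, choose it extremally so that it gives a dense pair over $C_1=\closure^G_{s_1}(U_1)$, and use $|S\setminus V(C_1)|\le|V(\closure^G_{s_2}(U_2))|\le s_1$ to contradict the no-$s_1$-dense-extension property of the closure. The paper simply takes $S$ inclusion-minimal, which already suffices because intermediate $R$ then satisfy $f(C_1,R)\ge 0$, so your value-then-size minimisation is a harmless refinement; as you noticed, the paper's proof does not use the sparseness of $(U_1,G\upharpoonright U_2)$ either.
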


\begin{proof}
Denote $F_1:=\closure^G_{s_1}(U_1)$, $F_2:=\closure^G_{s_2}(U_2)\cup \closure^G_{s_1}(U_1)$ and assume the opposite --- there exists $S$ such that $V(F_1)\subseteq S\subset V(F_2)$ and $e(F_1,F_2\upharpoonright S)>\frac{1}{\alpha}v(F_1,F_2\upharpoonright S)$. Assume $S$ is inclusion-minimal with this property. Then, for every $V(F_1)\subset S'\subset S$, we have that $e(F_1,F_2\upharpoonright S')<\frac{1}{\alpha}v(F_1,F_2\upharpoonright S')$, implying that
$$
 e(S',F_2\upharpoonright S)=e(F_1,F_2\upharpoonright S)-e(F_1,F_2\upharpoonright S')
 >\frac{1}{\alpha}\biggl(v(F_1,F_2\upharpoonright S)-v(F_1,F_2\upharpoonright S')\biggr)
 =\frac{1}{\alpha}v(S',F_2\upharpoonright S).
$$
Therefore, the pair $(F_1,F_2\upharpoonright S)$ is dense. Since 
\begin{align*}
|S\setminus V(F_1)|\leq|V(F_2)|-|V(F_1)|\leq |V(F_2)|\leq s_1,
\end{align*} 
we get that $F_2\upharpoonright S\subset\closure_{s_1}(U_1)$ --- a contradiction with the assumption that $U_2\not\subset V(F_1)$. 
\end{proof}

\begin{proof}[Proof of Lemma~\ref{lem:sparse_main}.]

We prove the lemma by induction on the grammar for terms. We start from the atomic cases.

\paragraph{Atomic cases.}

\begin{enumerate}

\item $\tau\equiv\mathrm{const}$. In this case, the assertion is obvious for $\varphi=\tau$.

\item $\tau(u_1,u_2)=\mathbbm{1}_{\mathrm{E}(u_1,u_2)}$  is a term with two free variables. For $\mathbf{t}=F[(1,2)]\in\closuretypes_2^{s_D}$, we let $\varphi(\mathbf{t})=\mathbbm{1}_{E(F\upharpoonright [2])\neq\varnothing}$. Then $\tau(u_1,u_2)=\varphi(\mathbf{t})$ for every pair of vertices $(u_1,u_2)$ with $s_D$-closure type $\mathbf{t}$. The inductive assertion clearly follows.

\item The case of $\tau(u_1,u_2)=\mathbbm{1}_{u_1=u_2}$ is analogous to the previous edge case.

\end{enumerate}

\paragraph{Function application.}  $\tau=f(\tau_1,\ldots,\tau_m)$ is a term of depth $d$, where $f\in\functionclassrellip$. We have that each term $\tau_i$ has depth $d_i\leq d$. Let $\tau$ and $\tau_i$ have $k$ and $k_i$ free variables respectively. Let $\overline{u}\in[n]^k$. Substituting it as an input of $\tau$, identifies arguments of $\tau_1,\ldots,\tau_m$ --- we denote the respective subtuples of $\overline{u}$ by $\overline{u}_i$. For the sake of convenience and clarity of presentation, in what follows, we assume that $\overline{u}\in([n])_k$. The case when $u_i=u_j$ for some $1\leq i<j\leq k$ is verbatim.

Fix an $s_{D-d}$-closure type of $k$-tuples $\mathbf{t}\in\closuretypes^{(s_{D-d})}_k$. The event $\closuretypeof_{s_{D-d}}(\overline{u})=\mathbf{t}$ defines the $s_{D-d}$-closure types $\mathbf{t}_i$ of $\overline{u}_i$, since
 $\closure_{s_{D-d}}(\overline{u})\supseteq\closure_{s_{D-d}}(\overline{u}_i)\supseteq\closure_{s_{D-d_i}}(\overline{u}_i)$ for every $i\in[m]$, due to Fact~\ref{fact:closure-inclusion}.

We assume that the event $\mathcal{L}\wedge\{\closuretypeof_{s_{D-d}}(\overline{u})=\mathbf{t}\}$ holds in what follows.
By induction, we get that $\varphi^{(i)}:=\varphi^{(i)}(\mathbf{t}_i)$, $i\in[m]$, have been already defined and that terms $\tau_1,\ldots,\tau_m$ satisfy the inductive assertion. In particular, there are constants $K_1,\ldots,K_m$ and $\varepsilon_1,\ldots,\varepsilon_m$ such that $\mathbb{P}(n^{-K_i}\leq \tau_i\leq n^{K_i}\mid \tau_i\neq 0)=1$ and, with probability $1-n^{-\omega(1)}$, $|\tau_i-\varphi_n^{(i)}|\leq n^{-\varepsilon_i}\cdot\varphi_n^{(i)}$, for every $i\in[m]$. We apply Theorem~\ref{cl:function-concentration} with 
$$
\xi_i:=\tau_i, \, \varphi_i:=\varphi^{(i)}_n, \, \delta:=n^{-\omega(1)},
$$
$$
 \delta':=n^{-\min\{\varepsilon_1,\ldots,\varepsilon_m\}}, \, \text{ and }\,
 K:=\max\{K_1,\ldots,K_m\}.
$$
We immediately get that, for some constants $a,b>0$ (that only depend on $f$), 
$$
\mathbb{P}(\tau\in[K^{-b},K^b]\mid\tau\neq 0)=1,
$$
and either $\varphi_n\equiv 0$ and $\mathbb{P}(\tau=0)=1-n^{-\omega(1)}$, or $\varphi_n\geq K^{-b}$ and 
$$
\mathbb{P}\left(|\tau-\varphi_n|>a\delta'\cdot\varphi_n\right)=1-n^{-\omega(1)}.
$$

Finally, we assume that $\tau \in \mathrm{Agg}_{\functionclasspoly}$ and, in particular, $f\in\functionclasspoly$. Let  $I=\{i_1,\ldots,i_{m'}\}\subset[m]$ be the set of  $i\in[m]$ such that $\varphi^i_n\not\equiv 0$. We also assume, by induction, that, for every $i\in I$, $\varphi_n^i=(c_i+o(1))n^{\gamma_i}$ for some positive $c_i$ and real $\gamma_i$. Since $f\in\functionclasspoly$, we get that $f_I$ is asymptotically polynomial. Then, either $f_I\equiv 0$ and $\varphi_n=0$, or the equality $\tau=(c+o(1))n^{\gamma}$ holds
for some $c>0$ and $\gamma\in\mathbb{R}$, completing the proof for the case of function application.\\

Due to Remark~\ref{rk:no-inf}, it remains to handle two kinds of aggregations. Fix a closure type $F_0[(1,\ldots,k)]=\mathbf{t}_0\in\closuretype^{(s_{D-d})}_k$. In what follows, we assume that the event $\closuretypeof_{s_{D-d}}(\overline{u})=\mathbf{t}_0$ holds and define the value of $\varphi_n(\mathbf{t}_0)$.  Let $\closuretypes'_{F_0}\subset\closuretypes_{k+1}^{(s_{D-d+1})}$ be the set of all $s_{D-d+1}$-closure types $F[(1,\ldots,k+1)]$ such that
\begin{itemize}
\item $F\upharpoonright [k]=F_0\upharpoonright [k]$, 
\item $\closure_{s_{D-d+1}}^{F\cup F_0}([k+1])=F$,
\item the pair $(F_0,F\cup F_0)$ is sparse. 
\end{itemize}
Recall that the set $\closuretypes'_{F_0}\subset \closuretypes_{k+1}^{(s_{D-d+1})}$ is finite.

Due to Fact~\ref{fact:pair-of-closures-sparse}, for every $v\notin\closure_{s_{D-d}}(u)$, we have that 
$\closure_{s_{D-d+1}}(\overline{u},v)\cup\closure_{s_{D-d}}(\overline{u})$
is a sparse extension of 
$\closure_{s_{D-d}}(\overline{u})$. 
So, there exists $\mathbf{t}'\in\closuretypes'_{F_0}$ such that $(\overline{u},v)$ has $s_{D-d+1}$-closure type $\mathbf{t}'$.
Therefore, the tuple $(\overline{u},v)$ has $\closuretypeof_{s_{D-d+1}}(\overline{u},v)\in\mathcal{C}'_{F_0}$. Let us show the opposite: that every closure type from $\closuretypes'_{F_0}$ is achievable by some $(\overline{u},v)$ with probability $1-n^{-\omega(1)}$. Fix $F[(1,\ldots,k+1)]\in\closuretypes'_{F_0}$. 
By Theorem ~\ref{thm:extensionexpectation}, with probability $1-n^{-\omega(1)}$ there exists an $(F_0,F\cup F_0)$-extension $F'_2\cup F'_1$ of $F'_1:=\closure_{s_{D-d}}(\overline{u})$ such that
\begin{itemize}
\item $F'_2$ is a $([k],F)$-extension of $\overline{u}$. Recall from the last bullet item of Definition \ref{def:extensions} that this means 
an extension that agrees with $F$ on all edges outside of~$[k]$. 
\item $F'_2$ does not have $s_{D-d+1}$-dense extensions $F'_3$ in $\mathbf{G}_n$ with $V(F'_3)\setminus (V(F'_1)\cup V(F'_2))\neq\varnothing$ and $E(F'_3)\cap((V(F'_2)\setminus V(F'_1))\times(V(F'_3)\setminus (V(F'_1)\cup V(F'_2)))\neq\varnothing$.
\end{itemize}
Let $v$ be the image of $k+1$ under a corresponding isomorphism of extensions $F\to F'_2$. Let us show that $\closuretypeof_{s_{D-d+1}}(\overline{u},v)=F[(1,\ldots,k+1)]$. Note that $F'_3\setminus F'_2$ cannot be entirely inside $F'_1$ by the definition of $\closuretypes'_{F_0}$. It then suffices to prove that $F'_2$ does not have $s_{D-d+1}$-extensions that have some vertices outside of $F'_2\cup F'_1$. Assume the opposite --- let $F'_3$ be such an extension of $F'_2$. We also know that there are no edges between $V(F'_3)\setminus(V(F'_1\cup F'_2))$ and $V(F'_2)\setminus V(F'_1)$. Since the pair $(F'_3,F'_2)$ is $s_{D-d+1}$-dense, by Fact~\ref{fact:dense-sub-extension}, we get that $F'_1\cup(F'_3\setminus(F'_1\cup F'_2))$ is an $s_{D-d+1}$-dense extension of $F'_1$ --- condradiction with the fact that $F'_1=\closure_{s_{D-d}}(\overline{u})$.

\paragraph{Summation approximation.} $\tau:=\tau(\overline{u})=\sum_v\tau'(\overline{u},v)$, where, by the induction hypothesis, there exist constants $\varepsilon'>0,K'>0$ and a map $\varphi'_n:\closuretype_{k+1}\to\mathbb{R}_{\geq 0}$ such that, for every closure type $\mathbf{t}$ and every tuple $(\overline{u},v)$,
\begin{align*}
\mathbb{P}\left(|\tau'(\overline{u},v)-\varphi'_n(\mathbf{t})|>n^{-\varepsilon'}\varphi'_n(\mathbf{t})\mid \closuretypeof_{s_{D-d+1}}(\overline{u},v)=\mathbf{t}\right)
=n^{-\omega(1)}
\end{align*}
and $\mathbb{P}(n^{K'}\leq\tau'(\overline{u},v)\leq n^{K'}\mid \tau'(\overline{u},v)\neq 0)=1$. Moreover, either $\varphi'_n(\mathbf{t})\equiv 0$ or $\varphi'_n(\mathbf{t})\geq n^{-K'}$.

We immediately get that $\tau(\overline{u})\leq n^{K'+1}$ a.s. Moreover, if $\tau(\overline{u})\neq 0$, then there exists $v$ such that $\tau'(\overline{u},v)\neq 0$ which, in turn, implies $\tau\geq\tau'(\overline{u},v)\geq n^{-K'}$, by the induction assumption. Therefore,
$ \mathbb{P}(\tau\geq n^{-K'}\mid\tau\neq 0)=1$.

We then fix $v\in[n]$ and show that $\mathbf{t}_0$ defines the type of $(\overline{u},v)$, in each of the following three scenarios.

\begin{itemize}
\item $v\in\overline{u}$ (say $v=u_i$). In this case, 
$$
\closure_{s_{D-d+1}}(\overline{u},v)=\closure_{s_{D-d+1}}(\overline{u})\subseteq \closure_{s_{D-d}}(\overline{u}).
$$
Therefore, $\mathbf{t}_0$ identifies the $s_{D-d+1}$-closure type $\mathbf{t}_i$ of $(\overline{u},v)$.
\item $v\in\closure_{s_{D-d}}(\overline{u})\setminus\overline{u}$. Then, since  $k+1\leq W$, we get that 
$$
s_{D-d}\geq\ell(W,s_{D-d+1})\geq\ell(k+1,s_{D-d+1}).
$$
Since the event $\mathcal{L}$ holds, we can assume that $\closure_{s_{D-d+1}}(\overline{u},v)$ has at most $s_{D-d}$ vertices. By Fact ~\ref{fact:closure-intersection}, we get that 
$$
\closure_{s_{D-d+1}}(\overline{u},v)\subseteq\closure_{s_{D-d}}(\overline{u}).
$$
Therefore, $\mathbf{t}_0$ identifies the $s_{D-d+1}$-closure type of $(\overline{u},v)$. We denote it by $\mathbf{t}(v')$ for the image $v'$ of $v$ under the isomorphism of MRGs $\mathrm{cl}_{s_{D-d}}(\overline{u})[\overline{u}]$ and $F_0[(1,\ldots,k)]$.
\item $v\notin\closure_{s_{D-d}}(\overline{u})$. We know that there exists $\mathbf{t}'\in\closuretypes'_{F_0}$ such that $(\overline{u},v)$ has $s_{D-d+1}$-closure type $\mathbf{t}'$.
\end{itemize}

\begin{definition}
For $F[(1,\ldots,k+1)]\in\mathcal{C}'_{F_0}$, let $\eta_F$ be the number of  $x\in V(F)$ such that there exists an automorphism of MRG $(F\cup F_0)[V(F_0)]$ that maps $k+1$ to $x$.
\end{definition}
Define 
\begin{equation}
 \label{eq:sparse_sum_pgi_definition-app}
 \varphi_n:=\sum_{i\in[k]}\varphi'_n(\mathbf{t}_i)+\sum_{v'\in F_0\setminus R_0}\varphi'_n(\mathbf{t}(v'))+
 \sum_{F[R]=\mathbf{t}\in\mathcal{C}'_{F_0}}\varphi'_n(\mathbf{t})\cdot\mu(F_0,F\cup F_0;s_{D-d+1})\cdot\eta_F.
\end{equation}
For brevity, we denote 
$$
 \mu_F:=\mu(F_0,F\cup F_0;s_{D-d+1})\cdot\eta_F.
$$
In what follows, we show that, with probability $1-n^{-\omega(1)}$, for every $F[(1,\ldots,k+1)]=\mathbf{t}\in\mathcal{C}'_{F_0}$, the number of vertices $v$ such that $(\overline{u},v)$ has $s_{D-d+1}$-closure type $\mathbf{t}$ equals $(1\pm n^{-\varepsilon''})\mu_F$, for some $\varepsilon''>0$. This would complete the proof of concentration in the $\functionclassrellip$-case. Indeed, if all summands in~\eqref{eq:sparse_sum_pgi_definition-app} equal 0, then $\varphi_n=0$ and, by induction assumption and the union bound, with probability $1-n^{-\omega(1)}$ for every $v$, $\tau'(\overline{u},v)=0$, implying $\tau(\overline{u})=0$. If at least one summand $\varphi'_n(\mathbf{t})$ in~\eqref{eq:sparse_sum_pgi_definition-app} does not equal 0, then $\varphi_n\geq\varphi'_n(\mathbf{t})\geq n^{-K'}$. Moreover, 
\begin{align*}
 |\tau-\varphi_n|&\leq\sum_{v\in V(F'_1)}|\tau(\overline{u},v)-\varphi'_n(\closuretypeof_{s_{D-d+1}}(\overline{u},v))|+\left|\sum_{v\notin V(F'_1)}\tau(\overline{u},v)-\sum_{\mathbf{t}\in\mathcal{C}'_{F_0}}\varphi'_n(\mathbf{t})\cdot\mu_F\right|\\
 &\leq n^{-\varepsilon'}\sum_{v\in V(F'_1)}\varphi'_n(\closuretypeof_{s_{D-d+1}}(\overline{u},v))+\sum_{\mathbf{t}\in\mathcal{C}'_{F_0}}\left|(1\pm n^{-\varepsilon''}\pm n^{-\varepsilon'})\mu_F\varphi'_n(\mathbf{t})-\varphi'_n(\mathbf{t})\mu_F\right|\\
 &\leq (n^{-\varepsilon'}+n^{-\varepsilon''})\varphi_n,
\end{align*}
as needed.

So, in order to complete the proof of concentration, it remains to prove that, for every $\mathbf{t}=F[(1,\ldots,k+1)]\in\mathcal{C}'_{F_0}$, $\mu_F$ approximates well the number of vertices $v$ such that $\closuretypeof_{s_{D-d+1}}(\overline{u},v)=\mathbf{t}$. Due to Theorem~\ref{thm:extensionexpectation}, the number of $(F_0,F\cup F_0)$-extensions $F':=F'_2\cup F'_1$ of $F'_1:=\closure_{s_{D-d}}(\overline{u})$ such that $F'_2$ is a $([k],F)$-extension of $\overline{u}$ and $F'$ does not have $s_{D-d+1}$-dense extensions equals $(1\pm n^{-\varepsilon''})\mu(F_0,F\cup F_0;s_{D-d+1})$. On the one hand, every $v\notin V(F'_1)$ has $\mathrm{cl}_{s_{D-d+1}}(\overline{u},v)=:F'_2$ such that $F'_2[(\overline{u},v)]\cong F[(1,\ldots,k+1)]$ for some $F[(1,\ldots,k+1)]\in\mathcal{C}'_{F_0}$. On the other hand, for every $F[(1,\ldots,k+1)]\in\mathcal{C}'_{F_0}$ and every $(F_0,F\cup F_0)$-extension $F'$ of $F'_1$, the number of vertices $v\in V(F')\setminus V(F'_1)$ such that $\mathrm{cl}_{s_{D-d}}(\overline{u})\cup\mathrm{cl}_{s_{D-d+1}}(\overline{u},v)=F'$ is exactly $\eta_F$, as required.

Finally, let $\tau \in \mathrm{Agg}_{\functionclasspoly}$. We assume that, for every $v\in[n]$ and for $\mathbf{t}:=\closuretypeof_{s_{D-d+1}}(\overline{u},v)$, 
$$
\varphi'_n(\mathbf{t})=(c'(\mathbf{t})+o(1))\cdot n^{\gamma'(\mathbf{t})}.
$$
The case of $\varphi'_n(\closuretypeof_{s_{D-d+1}}(\overline{u},v))\equiv 0$ for all $v\in[n]$ is discussed above. Therefore, we assume $\varphi'_n(\closuretypeof_{s_{D-d+1}}(\overline{u},v))\not\equiv 0$,  for some $v\in[n]$. For $F[(1,\ldots,k+1)]\in\mathcal{C}'_{F_0}$, we let $\beta_F:=\lim_{n\to\infty}\ln\mu_F/\ln n$.
Note that  Theorem~\ref{thm:extensionexpectation} states that there are constants $a,b$ such that $\mu_F=(a+o(1))n^b$. Therefore, $\ln\mu_F=(b+o(1))\ln n$.
Thus the limit exists.
 From Definition~\eqref{eq:sparse_sum_pgi_definition-app} and ~\eqref{eq:exetnsions-expectation-asymp}, we get that 
$$
\varphi_n=(c+o(1))\cdot n^{\gamma},
$$
where  
$$
 \gamma=\left\{\max_{v\in V(F'_1)}\gamma'(\closuretypeof_{s_{D-d+1}}(\overline{u},v)),\,\max_{F[R]=\mathbf{t}\in\mathcal{C}'_{F_0}}\beta_F\gamma'(\mathbf{t})\right\},
$$
and
\begin{align*}
 c &=\sum_{v\in V(F'_1):\,\gamma'(c)=\gamma}c'(\closuretypeof_{s_{D-d+1}}(\overline{u},v))+\sum_{F[(1,\ldots,k+1)]=\mathbf{t}\in\mathcal{C}'_{F_0}:\,\gamma'(\mathbf{t})=\gamma}\frac{c'(\mathbf{t})\eta_F}{\mathrm{aut}(F_0,F\cup F_0)},
\end{align*}
completing the proof for the case of summation approximation.

\paragraph{$\supl$ approximation.} $\tau(\overline{u})=\max_v\tau'(\overline{u},v)$. By the induction hypothesis, there exist constants $\varepsilon',K'>0$ and a map $\varphi'_n:\closuretype_{k+1}\to\mathbb{R}_{\geq 0}$ as in the previous case. We immediately get that $\tau(\overline{u})\leq n^{K'}$ a.s. and that $\mathbb{P}(\tau(\overline{u})\geq n^{-K'}\mid\tau\neq 0)=1$. Indeed, if $\tau(\overline{u})\neq 0$, then there exists $v$ such that $\tau'(\overline{u},v)\neq 0$ which, in turn, implies $\tau(\overline{u})\geq\tau'(\overline{u},v)\geq n^{-K'}$. 

As in the previous case of summation agregation, we fix $v\in[n]$ and observe that $\mathbf{t}_0$ defines the type of $(\overline{u},v)$, in each of the three cases 1) $v\in\overline{u}$, 2) $v\in\mathrm{cl}_{s_{D-d}}(\overline{u})\setminus\overline{u}$, and 3) $v\notin\mathrm{cl}_{s_{D-d}}(\overline{u})$. Define  $\varphi_n$ to be the maximum of
\[
\max_{i\in[k]}\varphi'_n(\mathbf{t}_i), \,
 \max_{v\in\closure_{s_{D-d}}(\overline{u})\setminus\overline{u}}\varphi'_n(\mathbf{t}(v)),  \,
\max_{\mathbf{t}'\in\closuretypes'_{F_0}}\varphi'_n(\mathbf{t}')
 \]
We have that $\varphi_n\equiv 0$ if and only if, for every $v\in[n]$,  $\varphi'_n(\closuretypeof_{s_{D-d+1}}(\overline{u},v))\equiv 0$.
 
Since all the closure types from $\closuretypes'_{F_0}$ are achievable by $(\overline{u},v)$, then $\varphi_n=\varphi'_n(\closuretypeof_{s_{D-d+1}}(\overline{u},v))$ for some $v$. Therefore, on the one hand, with probability $1-n^{-\omega(1)}$,
\begin{align*}
\tau(\overline{u})\geq\tau'(\overline{u},v)\geq \varphi'_n(\closuretypeof_{s_{D-d+1}}(\overline{u},v))(1-n^{-\varepsilon'})=\varphi_n(1-n^{-\varepsilon'}).
\end{align*}
On the other hand, with probability $1-n^{-\omega(1)}$,
\begin{align*}
 \tau(\overline{u})\leq\max_v\varphi'_n(\closuretypeof_{s_{D-d+1}}(\overline{u},v))(1+n^{\varepsilon'})=\varphi_n(1+n^{-\varepsilon'}),
\end{align*}
completing the proof of concentration.

Finally, if $\tau \in \mathrm{Agg}_{\functionclasspoly}$, then, applying the induction hypothesis and the definition of $\varphi_n$, we immediately get that $\varphi_n=(c+o(1))n^{\gamma}$ for some $c>0$ and $\gamma\in\mathbb{R}$, completing the proof.

\end{proof}

\subsection{Convergence law for average and $\max$}
\label{subsc:convergence-law-sparse}

Recall from Section \ref{sec:related} that \cite{usneurips24} proves convergence for the sparse case, but with \emph{only} averages, not maximum or minimum. Using the techniques developed for our concentration results, we can show that adding $\supl$ to averaging operators we still have the convergence law.

\begin{theorem} \label{thm:aasshalehspencerirrationalavg}
For every closed $\tau\in\Aggo{\Lipsch, \Mean, \LMean, \supl}$, its evaluation $[\![\tau]\!]_{\mathbf{G}_n}$  
converges in probability to a constant.
\end{theorem}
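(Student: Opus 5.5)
We would prove a stronger, open-term statement by induction on the term grammar, modelled on Lemma~\ref{lem:sparse_main}. Fix $D$, $W$ and a $(D,W)$-rapid sequence. For every term $\tau\in\Aggo{\Lipsch,\Mean,\LMean,\supl}$ of depth $d\le D$ with $k$ free variables and at most $W$ variables in total, we claim there is a map $\varphi:\closuretypes^{(s_{D-d})}_k\to\mathbb{R}$, independent of $n$, with the following property. For every $\varepsilon>0$ and every $\overline{u}$,
\[
\mathbb{P}\bigl(|\tau(\overline{u})-\varphi(\mathbf{t})|>\varepsilon\mid\mathcal{L}\wedge\{\closuretypeof_{s_{D-d}}(\overline{u})=\mathbf{t}\}\bigr)=n^{-\omega(1)}.
\]
We ask for additive rather than relative accuracy because the connectives are only Lipschitz. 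Terms in this language are bounded, since $\Mean$ and $\LMean$ preserve bounds and Lipschitz functions map bounded sets to bounded sets. So additive error is the right notion. Applying the claim to the empty tuple, together with $\mathbb{P}(\mathcal{L})\to1$, gives the theorem.

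The atomic cases are exact, as in Lemma~\ref{lem:sparse_main}. For function application we set $\varphi=f(\varphi_1,\ldots,\varphi_m)$, using Fact~\ref{fact:closure-inclusion} to see that $\mathbf{t}$ determines the closure types of the subtuples. Lipschitz continuity turns an additive error of $\varepsilon/L$ in each argument into an additive error of $\varepsilon$, and a union bound handles the $m$ arguments.

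The case $\supl$ is copied from the sparse $\supl$ approximation. Vertices $v$ in $\closure_{s_{D-d}}(\overline{u})$ have types determined by $\mathbf{t}_0$, by Fact~\ref{fact:closure-intersection}. Every other $v$ has type in $\closuretypes'_{F_0}$ by Fact~\ref{fact:pair-of-closures-sparse}. Each type in $\closuretypes'_{F_0}$ is realised with probability $1-n^{-\omega(1)}$ by Theorem~\ref{thm:extensionexpectation}. We therefore take $\varphi$ to be the maximum of $\varphi'$ over this finite set of types. A union bound over the $n$ choices of $v$ is affordable because each failure probability is $n^{-\omega(1)}$.

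For $\Mean_v$ we write $\tau(\overline{u})=\frac1n\sum_v\tau'(\overline{u},v)$. Vertices inside the closure contribute $O(1/n)$, since the closure has bounded size on $\mathcal{L}$ and $\tau'$ is bounded. By the counting in the summation step of Lemma~\ref{lem:sparse_main}, the number of $v$ of type $F\in\closuretypes'_{F_0}$ is $(1\pm n^{-\varepsilon''})\mu_F$. Here $\mu_F\asymp n^{v-\alpha e}$ with $v=v(F_0,F\cup F_0)$ and $e=e(F_0,F\cup F_0)$.

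Since $\alpha$ is irrational and $v\ge1$, the exponent $v-\alpha e$ is never exactly $1$. If $e=0$, then $F$ is just $v$ isolated from $\overline{u}$ and its closure, with no extra vertices; this is the unique type with $\mu_F=(1+o(1))n$. Every other type has exponent $<1$ and contributes a fraction $o(1)$. Hence $\varphi(\mathbf{t}_0)=\varphi'(\mathbf{t}_{\mathrm{iso}})$, where $\mathbf{t}_{\mathrm{iso}}$ is that isolated type.

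The main obstacle is $\LMean_{u_1Ev}$, where $\deg(u_1)=\Theta(n^{1-\alpha})$ is a much smaller population. The neighbours $v$ of $u_1$ outside the closure are split by the types $F\in\closuretypes'_{F_0}$ in which $v\sim 1$. Their counts are again $(1\pm n^{-\varepsilon''})\mu_F$ by Theorem~\ref{thm:extensionexpectation}, and $\deg(u_1)$ itself is concentrated by the same theorem. The type in which $v$ is adjacent only to $u_1$ has exponent $1-\alpha$. Every other type has exponent $v-\alpha e<1-\alpha$: a new vertex $v$ needs $e\ge1$, and any extra vertex or edge strictly lowers the exponent, with no ties by irrationality.

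The point to verify carefully is sparsity plus the ``no ties'' step. For each extra structure we must check that $v-\alpha e\neq 1-\alpha$, which again uses $\alpha\notin\mathbb{Q}$. We must also check that adjacency to other vertices of the closure produces an exponent at most $1-2\alpha<1-\alpha$.

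Given these checks, $\varphi(\mathbf{t}_0)=\varphi'(\mathbf{t}_{\mathrm{leaf}})$, where $\mathbf{t}_{\mathrm{leaf}}$ is the type of a neighbour adjacent only to $u_1$. In the degenerate case $\deg(u_1)=0$ the term is defined to be $0$, but this has probability $n^{-\omega(1)}$. Neighbours of $u_1$ lying inside the closure are $O(1)$ in number out of $\Theta(n^{1-\alpha})$, so they are negligible.
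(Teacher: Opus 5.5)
Your proposal is correct and follows essentially the same route as the paper's proof via Lemma~\ref{lem:sparse_avg_inductive}. Both argue by induction over the grammar with $s$-closure types along a $(D,W)$-rapid sequence. Both propagate errors through connectives by the Lipschitz property and handle $\supl$ by a maximum over the finitely many achievable types. For $\Mean$ and $\LMean$, both show that the type in which $v$ is isolated (respectively, adjacent only to $u_1$) accounts for all but a vanishing fraction of the relevant vertices.

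The differences are cosmetic. You track additive error $\varepsilon$ instead of $n^{-\varepsilon}$. You also identify the dominant type by an irrationality (no-ties) argument, whereas the paper directly shows $v(F'_1,F'_2)-\alpha\, e(F'_1,F'_2)<1$ by noting that each dense step in building the closure contributes negatively. That same observation supplies the upper bound ($\le 1$, resp.\ $\le 1-\alpha$) which your no-ties argument tacitly presupposes. Alternatively, you can get that bound by comparing the concentrated count with the trivial bound $n$ (resp.\ $\deg(u_1)$).
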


Since the proof of Theorem~\ref{thm:aasshalehspencerirrationalavg} follows the same ideas as the proof of Theorem~\ref{th:sparse-concentration-and-limits} and is in fact simpler, we defer it to Appendix~\ref{app:irrational-proof}.

Although the convergence result here is fairly strong, it has the following weakness: averaging loses information. This is the perspective  in \cite{usneurips24,adamday2023zeroone}, where the convergence phenomena is related to ``oversmoothing'' in graph neural networks.  Seen from the point of view of understanding asymptotic combinatorics of graphs,  we find that many important  properties of sparse random graphs with well-behaved asymptotics do not converge to a constant, and hence cannot be expressed in the term language of Theorem~\ref{thm:aasshalehspencerirrationalavg}.
Consider, for instance, the number of triangles in a sparse random graph, say with $\alpha>\frac{2}{3}$. A typical vertex does not belong to any triangle. So, the average number of vertices that belong to a triangle converges to 0. 
 Nevertheless, as in the dense case, when $\alpha<1$, the number of triangles
in $\mathbf{G}_n$ is concentrated around $n^{\Theta(1)}$. 
  In this sense, Theorem~\ref{th:sparse-concentration-and-limits} gives a stronger result that captures important graph characteristics, as in Corollary~\ref{cor:sparse-cor}.

\section{Comparison with previous results} \label{sc:comparison}

We claimed in Section~\ref{sec:related} that our results imply some prior convergence results. We explain in detail here.

\subsection{Zero-one laws for FO logic}

We show that our results generalise the classical zero-one law for first-order logic~\cite{faginzeroone,glebskii}. First, for every sentence in first order logic, its Boolean connectives can be replaced with functions from $\functionclasspoly$. Indeed, let $f:\{0,1\}^m\to\{0,1\}$ be a Boolean function. Its extension $f^*\in\functionclasspoly$ can be defineed as folows: $f^*(\mathbf{0})=f(\mathbf{0})$, where $\mathbf{0}=(0,\ldots,0)\in\{0,1\}^m$. Then, for every $x\in\{0,1\}^m\setminus\{\mathbf{0}\}$ such that $f(x)=0$, we let $I(x)$ be the set of $i\in[m]$ such that $x_i\neq 0$ and define $f^*_{I(x)}\equiv 0$. For all the other non-empty $I\subseteq[m]$, we let $f^*_I\equiv 1$. 
  Next, every universal and existential quantifier can be replaced with aggregators $\supl, \infl$, respectively. Since every such term has value 0 or 1, Theorem~\ref{th:dense-limits} immediately implies that it converges in probability either to 0 to 1, which is exactly the same as convergence to 0 or 1 of the probability of validity of the respective sentence. Similarly, Theorem~\ref{th:sparse-concentration-and-limits} implies the zero-one law of Shelah and Spencer for the irrational sparse case. 

\subsection{Convergence laws for real-valued logics}

Theorem~\ref{th:dense-limits} also generalises the convergence law for real-valued logic~\cite{uslics}, when the connectors are relative Lipschitz and asymptotically polynomial. In particular, due to Corollary~\ref{cor:classes}, $f(x)=\frac{1}{x}\mathbbm{1}_{x>0}\in \functionclassrellip$ and $g(x_1,x_2)=x_1x_2\in \functionclassrellip$. Therefore, both functions also belong to $\functionclasspoly$. We conclude that the global average can be expressed as 
$$
 \frac{1}{n}\sum_v\tau(\cdot,v)=\sum_v g\left(f\left(\sum_u 1\right),\tau(\cdot,v)\right)
$$ 
and the local average can be expressed as 
$$
\frac{1}{n}\sum_{\mathrm{E}(u_i,v)}\tau(\overline{u},v)=
\sum_v g\left(f\left(\sum_v \mathbbm{1}_{\mathrm{E}(u_i,v)}\right),\tau(\overline{u},v)\right),
$$
where $\overline{u}=(u_1,\ldots,u_k)$.

Since \cite{uslics} deals with Lipschitz connectives, one may wonder whether our concentration results can be extended to Lipschitz functions.
We show that the answer is no, and in the process argue that the relative Lipschitz property is necessary.

Recall that the concentration result does not hold for bounded terms in the language considered in~\cite{uslics} extended by the aggregator $\Sigma$ --- for example it fails for $\mathrm{Agg}_{\{\sin x\}}$. In this example the connective function that we added was not convergent,
and it may look like this is the  only possible reason for getting non-convergence for bounded terms.
 However, as we show below, even when one deals with Lipschitz functions that are convergent, we are not guaranteed to get
concentation.

Define $\mathcal{F}_0$ as the set of all functions $f:\mathbb{R}^m_{\geq 0}\to\mathbb{R}_{\geq 0}$ that are both Lipschitz and relative Lipschitz and such that each specification $f_I$ is constant on $\mathbb{R}^{|I|}_{\geq 0}\setminus (0,C)^{|I|}$ for some $C\geq 0$.
Even when all connectors belong to $\mathcal{F}_0$, the requirement of being relative Lipschitz is essential to have concentration. More formally, there exists a Lipschitz function $f:\mathbb{R}_{\geq 0}\to\mathbb{R}_{\geq 0}$ such that $f$ is constant on $[1,\infty)$, but the conclusion of Theorem~\ref{th:dense-concentration} does not hold for $\mathrm{Agg}_{\mathcal{F}_0\cup\{f\}}$ --- see Claim~\ref{cl:counterexamples}.

On the other hand, there are functions that are not Lipschitz (even not continuous) that belong to $\functionclasspoly$ since specifications of $f$ should not be consistent. Therefore, our result in the dense case still applies to show convergence of some terms that are not covered by the convergence law of~\cite{uslics}.  

Finally, Theorem~\ref{thm:aasshalehspencerirrationalavg} extends the results from~\cite{usneurips24,uslics} to $\Aggo{\Lipsch,\LMean,\Mean,\supl}$ in the sparse irrational regime.

\section{Discussion} \label{sec:discussion}

In this paper we are concerned with two main cases of the growth rate of $p$ in the context of logical limit laws for  {\erdosrenyi} random graphs: $p=\mathrm{const}$ and $p=n^{-\alpha}$. We stress that the case of $p$ decaying more  slowly than any power of $n$ is exactly the same as the case of $p=\mathrm{const}$, as it happens with the FO 0-1 law --- see~\cite{spencer-dense} --- and we prove Theorems~\ref{th:dense-concentration},~\ref{th:dense-limits} only for constant $p$ just for the sake of clarity of presentation. Also, the case $p=n^{-\alpha}$ can be easily generalised to $p=\Theta(n^{-\alpha})$. As we note in the introduction, when $\alpha\in(0,1)$ is rational, the random graph does not satisfy the FO convergence law and, therefore, the conclusion of Theorem~\ref{th:sparse-concentration-and-limits} does not hold. The case where $\alpha>1$ is less interesting, since in this regime the random graph is a forest $\whp$ where the number of connected components of each isomorphism type is well understood. So the only missing case that would be interesting to investigate in the future in the context of asymptotic behaviour of terms is the case of linear sparse $G(n,p=c/n)$. We note that an analogue of Theorems~\ref{th:dense-concentration},~\ref{th:dense-limits},~\ref{th:sparse-concentration-and-limits} does not hold in this regime since the FO 0-1 fails. Nevertheless, terms in $\Aggo{\Lipsch,\LMean,\Mean,\supl}$ converge in distribution~\cite{uslics}.

The main results of this paper, both in the dense and sparse regimes, can be extended to relational structures with arbitrary signatures $\sigma$. In particular, let $\sigma=\{=,R_1,\ldots,R_k\}$, where $R_i$ has arity $i$, and let $G(n,p_1,\ldots,p_k)$ be the product measure on $\prod_{i=1}^k{[n]\choose i}$, where each $i$-subset is present with probability $p_i$, independently of the others. Then the same proof techniques can be applied to show the analogues of Theorems~\ref{th:dense-concentration},~\ref{th:dense-limits},~\ref{th:sparse-concentration-and-limits}  for this distribution with each $p_i$ being a constant or $n^{-\alpha_i}$ for some irrational $\alpha_i>0$ and the language is extended by adding relations from $\sigma$. It would be interesting to address other distributions on $\prod_{i=1}^k{[n]\choose i}$, as well as weighted random graph models. In particular, papers~\cite{usneurips24,uslics} also study graphs with independent and indentically distributed random weights (features) on its vertices which naturally arise in the context of graph neural networks. Clearly, our results can be extended to the weighted $G(n,p)$ when weights have a finite support $X\subset[0,\infty)$ and their distribution do not depend on $n$. It would be interesting and natural to generalise our results to other distributions on weights --- in particular, when the distribution depends on $n$.

The error terms in our main results can be optimised using the same proof methods, but getting optimal constant factors in the second order terms in asymptotical expansions of the terms in our languages seems a challenging task. In particular, we suspect that by excluding maximum aggregation and restricting the class of connective functions, it might be possible to prove a central limit theorem for $\frac{\tau-\mathbb{E}\tau}{\sqrt{\mathrm{Var}(\tau)}}$. It would be of significant interest to prove a more general result of this kind for the entire language $\Agglang_{\functionclass}$, a result that involves a wider class of limit distributions --- in particular, the Gumbel distribution, which is the limit for a rescaled maximum degree in the random graph~\cite{Bollobas_degrees,degreesequences,ivchenko}.

There are well-studied graph functions that are concentrated in $G(n,p)$ but are not covered in this paper. This is the case for terms in higher-order languages. In particular, it is known that the chromatic number and the independence number are well-concentrated~\cite{Bol-chrom,BE-independence,GM-independence,Matula1-independence,ShamirSpencer}. Nevertheless, the convergence law fails even for significantly small fragments of monadic second order --- in particular, this is the case for existential monadic second-order logic with two first order variables~\cite{AZ,PZ} (even though graph properties involving chromatic number and independence number are expressible in this logic). Although Theorems~\ref{th:dense-concentration},~\ref{th:dense-limits} cannot be generalised to a real-valued analogue of this fragment of monadic second-order logic, it remains an open question whether suitable additional restrictions on the term language could ensure concentration and subsume both the chromatic and independence numbers.

There are also interesting graph parameters that concentrate and that {\it can} be expressed in the ``first-order'' $\Agglang_{\functionclass}$ for some $\functionclass\neq\functionclassrellip$. In particular, if $\functionclass$ includes Boolean connectives and $\mathbbm{1}_{x=y}$, then it is possible to express the total number of vertices $X^{\max}$ that have maximum degree $\Delta$. This quantity equals 1 $\whp$ in $G(n,p)$ (and concentrates)~\cite{degreesequences}. Nevertheless, it is clear that it is not possible to express this term in $\Agglang_{\functionclassrellip}$. Indeed, let $\mathbf{G}^1\sim G(n,1/2)$ and let $\mathbf{G}^2$ be generated in the following way: Generate $G(n,1/2)$ and keep it unchanged with probability $1/2$; otherwise, with probability $1/2$, find the maximum degree $\Delta$, then take a uniformly random vertex, delete all edges it belongs to, and draw instead $\Delta$ uniformly random edges to all vertices excluding the one with the maximum degree. It is possible to show that $\mathbf{G}^1$ and $\mathbf{G}^2$ are industinguishable $\whp$, i.e., 
$$
\biggl|[\![\tau]\!]_{\mathbf{G}^1}-[\![\tau]\!]_{\mathbf{G}^2}\biggr|=o\biggl(\min\bigl\{[\![\tau]\!]_{\mathbf{G}^1},[\![\tau]\!]_{\mathbf{G}^2}\bigr\}\biggr)
$$ 
$\whp$, for every non-zero $\tau\in\Agglang_{\functionclassrellip}$. Nevertheless, $\whp$ $\mathbf{G}^1$ has one vertex with maximum degree, while $\mathbf{G}^2$ has two vertices with maximum degree, contradicting $X^{\max}\in\Agglang_{\functionclassrellip}$.

On the one hand, the discussion above motivates the following question. Whether the language $\Agglang_{\mathrm{Bool}\cup\{\mathbbm{1}_{x=y}\}}$, where $\mathrm{Bool}$ is the set of all Boolean connectives, obeys the concentration law;
and, if the answer is positive, can
one can define a fairly broad class $\mathcal{F}$ that includes Boolean connectives and $\mathbbm{1}_{x=y}$ preserving the concentration law? On the other hand, the above example shows that we can use our concentration results to prove inexpressiveness results for real-valued logics.

Finally, our results opens up several new computational challenges associated with the behaviour of terms in random graphs. For example, assuming a restricted language, so that all components involved in the definition of a term can be effectively defined, how hard is it to determine whether the term converges and find or approximate the limit when it does? We note that computational aspects of asymptotic behaviour of FO sentences evaluated on {\it dense} random graphs have been thoroughly studied in the literature, see,~e.g.,~\cite{Demin,GHK,Liogonkii-comp}.

\bibliographystyle{plain}
\bibliography{references}

\appendix

\section{A class of relative Lipschitz differentiable functions}
\label{app:rellip}

Here we state and prove a claim that shows that every differentiable function $f$ with $(\ln f(x))'=O((\ln x)')$ is relative Lipschitz. In particular, the sigmoid function is relative Lipschitz.

\begin{claim}
\label{cl:bounded_functions}
Every differentiable $f:\mathbb{R}_{>0}\to\mathbb{R}_{>0}$ such that $xf'(x)<Cf(x)$ for some $C>0$ and all $x>0$ is relative Lipschitz.
\end{claim}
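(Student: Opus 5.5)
The plan is to pass to logarithmic coordinates and use Remark~\ref{rk:Hillbert}. Put $g(s)=\ln f(e^s)$ on $\mathbb{R}$. Then $g'(s)=e^sf'(e^s)/f(e^s)$, so the hypothesis $xf'(x)<Cf(x)$ says exactly that $g'<C$. For $a,b>0$ we have the identity
$$
\frac{|a-b|}{a+b}=\tanh\left(\tfrac12|\ln a-\ln b|\right),
$$
applied once to $(f(x),f(y))$ and once to $(x,y)$. So it suffices to show two things for $x,y>0$:
\begin{equation*}
|\ln f(x)-\ln f(y)|\le C'\,|\ln x-\ln y| \quad\text{and}\quad \tanh(C't)\le C'\tanh(t)\ \ (t\ge 0,\ C'\ge 1).
\end{equation*}
The second inequality holds because $\tanh$ is concave on $[0,\infty)$ with $\tanh(0)=0$, so $\tanh(t)=\tanh\bigl(\tfrac1{C'}\cdot C't+(1-\tfrac1{C'})\cdot 0\bigr)\ge \tfrac1{C'}\tanh(C't)$. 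Chaining everything gives~\eqref{eq:asymp_Lip} with constant $C'=\max\{C,1\}$.

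The first inequality is the mean value theorem for $g$: $g(s_1)-g(s_2)=g'(\xi)(s_1-s_2)$. Take $s_1>s_2$, that is, $x>y$. The hypothesis controls only the sign-relevant side, giving $\ln f(x)-\ln f(y)\le C(\ln x-\ln y)$. In words, $f$ cannot increase faster than a power of $x$.

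\textbf{The main obstacle is the other direction}, a lower bound on $g'$. The one-sided hypothesis as literally stated gives none. For instance, $f(x)=e^{-x}$ satisfies $xf'(x)=-xe^{-x}<Cf(x)$. Yet with $x=k+1$, $y=k$ we get $|f(x)-f(y)|/(f(x)+f(y))=\tanh(1/2)$, while $|x-y|/(x+y)\to0$, so $f$ is not relative Lipschitz. I would therefore first record this example. I would then prove the claim under the reading needed for the application, the sigmoid, and recorded in the appendix intro as $(\ln f)'=O((\ln x)')$: namely $|xf'(x)|\le Cf(x)$, or equivalently the stated bound together with $f$ nondecreasing, or more generally $xf'(x)>-Cf(x)$ as well.

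Under either version, $|g'|\le C$ and the mean value step gives the two-sided log-Lipschitz bound, which completes the argument above. For the sigmoid $f(x)=(1+e^{-x})^{-1}$ we have $f'\ge0$ and $xf'(x)/f(x)=xe^{-x}/(1+e^{-x})\le 1$, so it falls under the nondecreasing case.
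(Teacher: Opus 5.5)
Your counterexample is correct, and it is the main point: as literally stated the claim is false. For $f(x)=e^{-x}$ the hypothesis $xf'(x)<Cf(x)$ holds for every $C>0$. Yet $x=k+1$, $y=k$ give the value $\tanh(1/2)$ on the left of \eqref{eq:asymp_Lip}, against $1/(2k+1)$ on the right.

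The paper's proof has exactly the hole you anticipated. It sets $y=x+\delta$ and says it ``suffices'' to show $(2x+\delta)(f(x+\delta)-f(x))<(2C+1)\delta(f(x+\delta)+f(x))$. That inequality only bounds increases of $f$ and says nothing about $f(x)-f(x+\delta)$, which is precisely what fails for $e^{-x}$.

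There is a second issue in the paper's proof. Expanding $g'$, its estimate $g'(\delta)<2(x+\delta)f'(x+\delta)-2Cf(x+\delta)$ turns out to be equivalent to $\delta f'(x+\delta)+f(x)>0$. This is automatic when $f'\ge 0$, but the hypothesis does not imply it. So the paper's argument is really a proof for nondecreasing $f$. That covers the sigmoid application, but not the general statement.

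For the corrected (two-sided) statement, your route differs from the paper's. You pass to logarithmic coordinates and get $|\ln f(x)-\ln f(y)|\le C|\ln x-\ln y|$ from the mean value theorem. You then transfer this via $\frac{|a-b|}{a+b}=\tanh\bigl(\frac12|\ln a-\ln b|\bigr)$ and the concavity bound $\tanh(C't)\le C'\tanh(t)$, which makes Remark~\ref{rk:Hillbert} quantitative. The paper instead differentiates an auxiliary function of $\delta$ and uses that it vanishes at $\delta=0$.

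Your argument buys three things. It handles the intended hypothesis $|xf'(x)|\le Cf(x)$ (the paper's own gloss, $(\ln f)'=O((\ln x)')$) for arbitrary, possibly non-monotone $f$. It never meets the sign issue above. It gives the constant $\max\{C,1\}$ instead of $2C+1$. The paper's calculus argument, where valid, is elementary but inherently one-directional.

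One wording fix: ``the stated bound together with $f$ nondecreasing'' is a special case of $|xf'(x)|\le Cf(x)$, not equivalent to it. Write ``in particular'' there.
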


\begin{proof}
 
Let $y=x+\delta>x>0$. It suffices to prove that $(2x+\delta)(f(x+\delta)-f(x))<(2C+1)\delta(f(x+\delta)+f(x))$. We note that the left hand side and the right hand size of this inequality are equal when $\delta=0$. For a fixed $x>0$, define
$$
 g(\delta)=(2x+\delta)(f(x+\delta)-f(x))-(2C+1)\delta(f(x+\delta)+f(x)).
$$
We get 
\begin{align*}
g' &=f(x+\delta)-f(x)+(2x+\delta)f'(x+\delta)-(2C+1)(f(x+\delta)+f(x))-(2C+1)\delta f'(x+\delta)\\&<2(x+\delta)f'(x+\delta)-2Cf(x+\delta)<0
\end{align*}
for all $\delta>0$ and all $x>0$, completing the proof.

\end{proof}

\section{Bounded non-coverging relative Lipschitz function}
\label{app:no-limit}

\begin{claim}
$2+\sin(\ln(2+x))$ is bounded relative Lipschitz function such that $\lim_{x\to\infty}f(x)$ does not exist.
\end{claim}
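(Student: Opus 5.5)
The claim has three parts: $f(x)=2+\sin(\ln(2+x))$ is bounded, it has no limit at infinity, and it is relative Lipschitz. The plan is to check boundedness and non-convergence directly, and to get the relative Lipschitz property from Claim~\ref{cl:bounded_functions} in Appendix~\ref{app:rellip}. The extension to the closed domain $\mathbb{R}_{\geq 0}$ is then essentially free.

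\textbf{Boundedness and non-convergence.} Since $\sin$ takes values in $[-1,1]$, we have $1\leq f(x)\leq 3$ for all $x\geq 0$, so $f$ is bounded and strictly positive. For the oscillation, take $x_j=e^{\pi/2+2\pi j}-2$ and $y_j=e^{3\pi/2+2\pi j}-2$. Both tend to $\infty$, while $f(x_j)=3$ and $f(y_j)=1$ for every $j$. Hence $\lim_{x\to\infty}f(x)$ does not exist.

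\textbf{Relative Lipschitz on $\mathbb{R}_{>0}$.} The function $f$ is differentiable on $(0,\infty)$ with
$$
f'(x)=\frac{\cos(\ln(2+x))}{2+x}.
$$
It follows that
$$
|xf'(x)|\leq \frac{x}{2+x}<1\leq f(x),
$$
so $xf'(x)<Cf(x)$ holds with $C=1$, or any larger constant. Claim~\ref{cl:bounded_functions} then shows that $f$ is relative Lipschitz on $\mathbb{R}_{>0}$.

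\textbf{Extension to $\mathbb{R}_{\geq 0}$.} In dimension $m=1$, the only specialisation is $f_{\{1\}}=f|_{\mathbb{R}_{>0}}$, which is positive-valued and, by the previous step, relative Lipschitz. So $f\in\functionclassrellip$ by definition. The value $f(0)=2+\sin(\ln 2)>0$ plays no role in this condition.

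The only step with any content is the derivative bound, and it is immediate because the inner function $\ln(2+x)$ has derivative $1/(2+x)$, which beats the factor $x$. Everything else is routine. As a direct alternative to Claim~\ref{cl:bounded_functions}, one could use the mean value theorem:
$$
|f(x)-f(y)|\leq \frac{|x-y|}{2+\min\{x,y\}}\leq \frac{2|x-y|}{x+y}\cdot\frac{f(x)+f(y)}{2}.
$$
The last inequality uses $f\geq 1$, and the middle one can be checked by treating the cases $\min\{x,y\}\geq\max\{x,y\}/2$ and $\min\{x,y\}<\max\{x,y\}/2$ separately.
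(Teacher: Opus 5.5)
Your route differs from the paper's. The paper argues directly for this $f$ and splits into two regimes. For $y>(1+\varepsilon)x$, it uses that $\frac{|f(x)-f(y)|}{f(x)+f(y)}$ is at most a constant while $\frac{y-x}{y+x}$ is bounded below. For nearby points, it uses a sum-to-product identity. You instead reduce everything to the logarithmic-derivative bound $|xf'(x)|<f(x)$. This explains why the claim holds: $f$ oscillates on a logarithmic scale, so $\ln f$ is Lipschitz in $\ln x$, which is the Hilbert-metric viewpoint of Remark~\ref{rk:Hillbert}. It also applies unchanged to other such functions. Your boundedness, non-convergence, derivative computation and $m=1$ specialisation remark are all fine.

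The step where you invoke Claim~\ref{cl:bounded_functions}, however, rests on a statement that is not correct as written. That claim has the one-sided hypothesis $xf'(x)<Cf(x)$, and in that form it is false. The function $e^{-x}$ satisfies it for every $C>0$, yet $\frac{|e^{-k}-e^{-k-1}|}{e^{-k}+e^{-k-1}}=\tanh(1/2)$ while $\frac{1}{2k+1}\to 0$. Consistently with this, its proof only shows $\frac{f(y)-f(x)}{f(y)+f(x)}<(2C+1)\frac{y-x}{y+x}$ for $y>x$. So it controls increases of $f$ but not decreases. What you need is the two-sided bound $|xf'(x)|<f(x)$, which you did verify, and with it you can finish without the appendix. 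It gives $\bigl|\tfrac{d}{ds}\ln f(e^s)\bigr|<1$, hence $|\ln f(x)-\ln f(y)|\le|\ln x-\ln y|$. For $a,b>0$ we have $\frac{|a-b|}{a+b}=\tanh\bigl(\tfrac12|\ln a-\ln b|\bigr)$, which is increasing in $|\ln a-\ln b|$. Therefore $\frac{|f(x)-f(y)|}{f(x)+f(y)}\le\frac{|x-y|}{x+y}$, which is relative Lipschitz with constant $1$.

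Your ``direct alternative'' is also wrong as displayed. The second inequality fails, for example at $x=1$, $y=100$: the middle term is $99/3=33$, while the right-hand side is at most $6\cdot 99/101<6$. The mean value bound $\frac{|x-y|}{2+\min\{x,y\}}$ grows linearly for far-apart points, so no case split can rescue that inequality. The repair is as follows.
\begin{itemize}
\item When $\max\{x,y\}>2\min\{x,y\}$, drop the mean value theorem. Use $|f(x)-f(y)|\le 2\le f(x)+f(y)$ together with $\frac{|x-y|}{x+y}>\frac13$.
\item When $\max\{x,y\}\le 2\min\{x,y\}$, use $\frac{1}{2+\min\{x,y\}}\le\frac{3}{x+y}$ and $f\ge 1$. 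This gives constant $3/2$ rather than the $1$ your display asserts.
\end{itemize}
This repaired version is essentially the paper's two-regime argument.
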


\begin{proof}
Non-existence of a limit and boundedness are obvious, so we only need to prove that $f\in\functionclassrellip$. Let $y>x>0$. Let $\varepsilon>0$ be small enough. If $y>(1+\varepsilon)x$, then 
$$
 \frac{y-x}{y+x}=1-\frac{2x}{y+x}>1-\frac{2y/(1+\varepsilon)}{y+y/(1+\varepsilon)}=\frac{\varepsilon}{2+\varepsilon},\quad
\text{ while }\quad
\frac{f(y)-f(x)}{f(y)+f(x)}<\frac{2}{f(y)+f(x)}\leq \frac{1}{2},
$$
implying
$$
 \frac{f(y)-f(x)}{f(y)+f(x)}<\frac{2+\varepsilon}{2\varepsilon}\cdot\frac{y-x}{y+x}.
$$

Finally, let $(1+\varepsilon)x\geq y>x$. Then 
$$
\sin\left(\frac{\ln(y/x)}{2}\right)=
\sin\left(\frac{\ln(1+(y-x)/x)}{2}\right)\sim_{\varepsilon\to 0}
\sin\frac{y-x}{2x}\sim_{\varepsilon\to 0}\frac{y-x}{2x}.
$$
Therefore,
$$
 \frac{f(y)-f(x)}{f(y)+f(x)}=
 \frac{\sin(\ln(y/x)/2)\cos(\ln(yx)/2)}{4+\sin(\ln(yx)/2)\cos(\ln(y/x)/2)}<
 \frac{(y-x)/(2x)}{2}<\frac{y-x}{y+x},
$$
completing the proof.
\end{proof}

\section{Proof of Theorem~\ref{thm:aasshalehspencerirrationalavg}} 
\label{app:irrational-proof}

Recall the statement of Theorem~\ref{thm:aasshalehspencerirrationalavg}:

\medskip

For every closed $\tau\in\Aggo{\Lipsch, \Mean, \LMean, \supl}$, its evaluation $[\![\tau]\!]_{\mathbf{G}_n}$  
converges in probability to a constant.

\medskip

As with our other theorems, we need an inductive invariant for open terms:

\begin{lemma}[Inductive Term Approximation: sparse case] \label{lem:sparse_avg_inductive}
Let $\tau \in  \Aggo{\Lipsch, \Mean, \LMean, \supl}$ be a term of depth $d\leq D$ with $k$ free variables and at most $W$ variables in total and let $s_0\geq s_1\geq\ldots\geq s_D=0$ be a $(D, W)$-rapid sequence. There exists constant $\varepsilon>0$ 
 and a map $\varphi:\closuretypes^{(s_{D-d})}_k\to \mathbb{R}_{\geq 0}$,  such that, for every $\overline{u}\in[n]^k$ and every $\mathbf{t}\in\closuretypes^{(s_{D-d})}_k$
$$
\mathbb{P}\left(|\tau(\overline{u})-\varphi(\mathbf{t})|>n^{-\varepsilon}\mid\mathcal{L}\wedge\{\closuretypeof(\overline{u})=\mathbf{t}\}\right)=n^{-\omega(1)}.
$$
\end{lemma}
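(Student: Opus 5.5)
We argue by induction on the grammar of $\tau$, following the proof of Lemma~\ref{lem:sparse_main} closely. Here the error is \emph{additive} ($n^{-\varepsilon}$) rather than relative, and the connectives are Lipschitz rather than relative Lipschitz. For atomic terms, $\varphi(\mathbf{t})$ is read off the $s_D$-closure type exactly as before, so the error is zero. For a connective $\tau=f(\tau_1,\ldots,\tau_m)$ with $f\in\Lipsch$, the type $\mathbf{t}$ determines the types $\mathbf{t}_i$ of the subtuples by Fact~\ref{fact:closure-inclusion}. We set $\varphi(\mathbf{t})=f(\varphi_1(\mathbf{t}_1),\ldots,\varphi_m(\mathbf{t}_m))$. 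By the union bound, with probability $1-n^{-\omega(1)}$,
$$|\tau-\varphi(\mathbf{t})|\leq L_f\sqrt{m}\,n^{-\min_i\varepsilon_i}.$$
We then take any $\varepsilon<\min_i\varepsilon_i$. The $\supl$ case is verbatim the argument of Lemma~\ref{lem:sparse_main}. We split $v$ into three kinds: $v\in\overline{u}$; $v\in\closure_{s_{D-d}}(\overline{u})\setminus\overline{u}$, whose type is fixed by Fact~\ref{fact:closure-intersection} on $\mathcal{L}$; and $v$ outside the closure, whose type lies in $\closuretypes'_{F_0}$ by Fact~\ref{fact:pair-of-closures-sparse}. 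Every type in $\closuretypes'_{F_0}$ is realised with probability $1-n^{-\omega(1)}$ by Theorem~\ref{thm:extensionexpectation}. We define $\varphi$ as the maximum of $\varphi'$ over these finitely many types, and a union bound over the $n$ vertices gives additive error $n^{-\varepsilon'}$.

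For the global average $\Mean_v\tau'$, we use the same three-way split. The at most $\ell(k,s_{D-d})$ vertices inside the closure contribute $O(1/n)$, since the induction hypothesis, a bound on $\varphi'$ and the Lipschitz structure let us bound $\tau'$ by a constant. This bound is itself proved alongside the induction: terms of this language are a.s. bounded by a constant depending only on $\tau$, because averages and maxima preserve bounds and Lipschitz $f$ maps bounded sets to bounded sets. For each $F\in\closuretypes'_{F_0}$, Theorem~\ref{thm:extensionexpectation} shows the number of outside $v$ of that type is $(1\pm n^{-\varepsilon''})\mu_F$, where $\mu_F=\mu(F_0,F\cup F_0;s_{D-d+1})\eta_F$. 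Hence we set
$$\varphi(\mathbf{t}_0)=\lim_{n\to\infty}\frac1n\sum_{F}\varphi'(F)\,\mu_F .$$
Only types with $v(F_0,F\cup F_0)-\alpha e(F_0,F\cup F_0)=1$ survive: the single new vertex adjacent to none of $F_0$. All other sparse extensions have exponent strictly below $1$; since $\alpha$ is irrational, the exponent is never exactly $1$ otherwise. They therefore contribute $n^{-\Omega(1)}$. Since $\mu_F=(1/\mathrm{aut}+o(1))n$ with polynomial error (the $(1+o(1))$ in \eqref{eq:exetnsions-expectation-asymp} is in fact $1+O(1/n)$ from the explicit binomial formula, and $p^0=1$), the error is $n^{-\varepsilon}$. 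We must also check that $\sum_F\mu_F$ over the exponent-one types is $n-O(n^{1-\delta})$, so that the limit weights sum to $1$.

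The local average $\LMean_{u_iEv}\tau'$ is the main obstacle. Now both the numerator and the denominator $\deg(u_i)\approx n^{1-\alpha}$ must be handled. The denominator is the $(F_0,F_0+\text{pendant vertex at }u_i)$ extension count, which is concentrated with relative error $n^{-\varepsilon}$ by Theorem~\ref{thm:extensionexpectation}, plus $O(1)$ closure neighbours. The numerator splits over the types $F\in\closuretypes'_{F_0}$ in which $k+1$ is adjacent to $i$. The dominant ones have exponent exactly $1-\alpha$, namely a pendant neighbour of $u_i$ only. Any further edge lowers the exponent by a multiple of $\alpha$, and again irrationality prevents ties. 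We set $\varphi$ to be the weighted average of $\varphi'$ over these dominant types, with weights given by the limiting ratios $\mu_F/n^{1-\alpha}$. The delicate point is to verify that the relative errors of numerator and denominator combine into an additive $n^{-\varepsilon}$ error; this uses the boundedness of $\tau'$ and the fact that the denominator is $n^{\Omega(1)}$ on the event in question. Finally, Theorem~\ref{thm:aasshalehspencerirrationalavg} follows by applying the lemma to a closed term with the empty type, since $\mathcal{L}$ holds $\whp$.
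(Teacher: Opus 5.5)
Your proposal is correct and is essentially the paper's proof. The atomic, Lipschitz and $\supl$ cases are handled identically, and in both average cases all but $n^{1-\Omega(1)}$ (resp.\ $n^{1-\alpha-\Omega(1)}$) of the relevant vertices $v$ have the single generic closure type $\mathbf{t}^*$ (an extra isolated vertex, resp.\ a pendant neighbour of $u_i$), so your limit of type-weighted sums collapses to $\varphi'(\mathbf{t}^*)$, which is exactly the paper's choice of $\varphi$.

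The only real difference is how you exclude ties at the top exponent. You use irrationality of $\alpha$, with the bound $\leq 1$ (resp.\ $\leq 1-\alpha$) coming for free because there are at most $n$ vertices (resp.\ about $n^{1-\alpha}$ neighbours). The paper instead proves $v(F'_1,F'_2)-\alpha e(F'_1,F'_2)<1$ directly from the negativity of each dense step.

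One aside is wrong: the $(1+o(1))$ in \eqref{eq:exetnsions-expectation-asymp} is not $1+O(1/n)$. The count $\mu$ excludes copies that have $s$-dense extensions, which is an $n^{-\Theta(1)}$ relative loss. The polynomial rate should therefore come from the check $\sum_F\mu_F=n-O(n^{1-\delta})$ that you already flag.
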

In particular, for the equivalence class $C(\mathbf{t})$ of tuples with the same $s_{D-d}$-closure type $\mathbf{t}$, we get that both random variables $\max_{\overline{u}\in C(\mathbf{t})}\tau(\overline{u})$ and $\min_{\overline{u}\in C(\mathbf{t})}\tau(\overline{u})$ converge to $\varphi(\mathbf{t})$ in probability.

Theorem \ref{thm:aasshalehspencerirrationalavg} follows directly from Lemma~ \ref{lem:sparse_avg_inductive} by taking $k=0$.

\begin{proof}[Proof of Lemma~\ref{lem:sparse_avg_inductive}.]

We start from the atomic cases.

\paragraph{Atomic cases.}

\begin{enumerate}

\item $\tau\equiv\mathrm{const}$. In this case, the assertion is obvious for $\varphi=\tau$.

\item $\tau(u_1,u_2)=\mathbbm{1}_{\mathrm{E}(u_1,u_2)}$  is a term with two free variables. For $\mathbf{t}=F[(1,2)]\in\closuretypes_2^{s_D}$, we let $\varphi(\mathbf{t})=\mathbbm{1}_{E(F\upharpoonright [2])\neq\varnothing}$. Then $\tau(u_1,u_2)=\varphi(\mathbf{t})$ for every pair of vertices $(u_1,u_2)$ with $s_D$-closure type $\mathbf{t}$. The inductive assertion clearly follows. 

\item The case of $\tau(u_1,u_2)=\mathbbm{1}_{u_1=u_2}$ is analogous to the previous edge case.

\end{enumerate}

\paragraph{Function application.} $\tau=f(\tau_1,\ldots,\tau_m)$ is a term of depth $d$, where $f$ is a Lipshitz function. We have that each term $\tau_i$ has depth $d_i\leq d$. Let $\tau$ and $\tau_i$ have $k$ and $k_i$ free variables respectively. Let $\overline{u}\in[n]^k$. Substituting it as an input of $\tau$, identifies arguments of $\tau_1,\ldots,\tau_m$ --- we denote the respective subtuples of $\overline{u}$ by $\overline{u}_i$. For the sake of convenience and clarity of presentation, 
we assume that $\overline{u}\in([n])_k$. The case when $u_i=u_j$ for some $1\leq i<j\leq k$ is verbatim.

Fix an $s_{D-d}$-closure type of $k$-tuples $\mathbf{t}\in\closuretypes^{(s_{D-d})}_k$. The event $\closuretypeof_{s_{D-d}}(\overline{u})=\mathbf{t}$ defines the $s_{D-d}$-closure types $\mathbf{t}_i$ of $\overline{u}_i$, 
since
 $\closure_{s_{D-d}}(\overline{u})\supseteq\closure_{s_{D-d}}(\overline{u}_i)\supseteq\closure_{s_{D-d_i}}(\overline{u}_i)$ for every $i\in[m]$, due to Fact~\ref{fact:closure-inclusion}.

By induction, we get that $\varphi^{(i)}:=\varphi^{(i)}(\mathbf{t}_i)$, $i\in[m]$, have been already defined and that terms $\tau_1,\ldots,\tau_m$ satisfy the inductive assertion. 
In particular, there are constants $\varepsilon_1,\ldots,\varepsilon_m$ such that, with probability $1-n^{-\omega(1)}$, $|\tau_i-\varphi^{(i)}|\leq n^{-\varepsilon_i}$. Define $\varphi=f(\varphi^{(1)},\ldots,\varphi^{(m)})$. Since $f$ is Lipschitz, for some constant $C>0$,
\begin{align*}
 |\tau-\varphi| &=|f(\tau_1,\ldots,\tau_m)-f(\varphi^{(1)},\ldots,\varphi^{(m)})|\\
 &\leq C\|(\tau_1-\varphi^{(1)},\ldots,\tau_m-\varphi^{(m)}\|
 \leq C\sqrt{m}\cdot \max_i n^{-\varepsilon_i}\leq n^{-\varepsilon}
\end{align*}
with probability $1-n^{-\omega(1)}$, where $\varepsilon=\min\{\varepsilon_1,\ldots,\varepsilon_m\}/2$. This completes the proof.

It remains to handle three kinds of aggregations. 

\paragraph{Global average.} $\tau(\overline{u})=\frac{1}{n}\sum_v\tau'(\overline{u},v)$ is a term of depth $d$, where, by the induction hypothesis, there exist constant $\varepsilon'>0$ and a map $\varphi':\closuretypes^{(s_{D-d+1})}_{k+1}\to\mathbb{R}_{\geq 0}$ such that, for every closure type $\mathbf{t}\in\closuretypes^{(s_{D-d+1})}_{k+1}$ and every tuple $(\overline{u},v)$, 
\begin{equation}
\label{eq:mean_inductive_assumption_prob}
\mathbb{P}\left(|\tau'(\overline{u},v)-\varphi'(\mathbf{t})|> n^{-\varepsilon'}\mid \closuretypeof_{s_{D-d+1}}(\overline{u},v)=\mathbf{t}\right)\\=n^{-\omega(1)}.
\end{equation}
Fix $\overline{u}\in([n])_k$ and closure type $\mathbf{t}_0\in\closuretypes_k^{(s_{D-d})}$. In what follows, we assume that the event $\closuretypeof_{s_{D-d}}(\overline{u})=\mathbf{t}_0$ holds.  

We show that the only vertices that give non-negligible contribution to the value of $\tau(\overline{u})$ are the $v\notin\closure_{s_{D-d}}(\overline{u})$ that have no neighbours in $\overline{u}$ and such that $\overline{u}\cup v$ does not have $s_{D-d+1}$-dense extensions outside of $\mathrm{cl}_{s_{D-d}}(\overline{u})$.

For 
$v\notin\closure_{s_{D-d}}(\overline{u})$, the pair $(\overline{u},\mathbf{G}_n[\overline{u}\cup v])$ is sparse by the definition of a closure. Therefore, by Theorem ~\ref{thm:extensionexpectation}, the number of $v\notin\closure_{s_{D-d}}(\overline{u})$ that have neighbours in $\overline{u}$ is $O(np)=O(n^{1-\alpha})$, with probability $1-n^{-\omega(1)}$. 

Moreover, since the event $\mathcal{L}$ holds, we may assume that 
$$
|\closure_{s_{D-d+1}}(\overline{u}\cup v)|\leq s_{D-d}.
$$
Due to Fact~\ref{fact:pair-of-closures-sparse},
for every $v\notin\closure_{s_{D-d}}(\overline{u})$, the pair 
$$
(F'_1,F'_2):=(\closure_{s_{D-d}}(\overline{u}),\closure_{s_{D-d+1}}(\overline{u}\cup v)\cup \closure_{s_{D-d}}(\overline{u}))
$$
is sparse. 

In order to prove that vertices $v$ with non-empty set $V(F'_2)\setminus V(F'_1)$ do not contribute asymptotically to $\frac{1}{n}\sum\tau'(\overline{u},v)$, let us show that 
\begin{equation}
v(F'_1,F'_2)-\alpha\cdot e(F'_1,F'_2)<1.
\label{eq:v-e<1}
\end{equation}
Let $\mathbf{G}_n[\overline{u}\cup v]=H_0\subset H_1\subset\ldots\subset H_t=\closure_{s_{D-d+1}}(\overline{u}\cup v)$ be a sequence of $s_{D-d+1}$-dense extensions. By Fact~\ref{fact:dense-sub-extension}, pairs $(F'_1\cup H_{i-1},H_i)$ are $s_{D-d+1}$-dense as well. Every such dense pair contributes $v(F'_1\cup H_{i-1},H_i)-\alpha\cdot e(F'_1\cup H_{i-1},H_i)<0$ to the left hand side of~\eqref{eq:v-e<1} whenever the inclusion $F_1'\cup H_{i-1}\subset H_i$ is proper. Otherwise, the contribution is zero. Therefore,
$$
v(F'_1,F'_2)-\alpha e(F'_1,F'_2)
=
1-\alpha e(F'_1,\mathbf{G}_n\upharpoonright (V(F'_1)\cup v))
+\sum_{i=1}^t(v(F'_1\cup H_{i-1},H_i)-\alpha e(F'_1\cup H_{i-1},H_i))<1,
$$
as needed.

By Theorem ~\ref{thm:extensionexpectation}, for every sparse $(F_1,F_2)$ such that $V(F_1)=[V(\closure_{s_{D-d}}(\overline{u}))]$, $v(F_1,F_2)\leq s_{D-d}$, and $v(F_1,F_2)-\alpha\cdot e(F_1,F_2)<1$, the number of $(F_1,F_2)$-extensions of $\mathrm{cl}_{s_{D-d}}(\overline{u})$   is concentrated around 
$$
\mu(F_1,F_2;0)=\Theta\left(n^{v(F_1,F_2)-\alpha\cdot e(F_1,F_2)}\right)\stackrel{\eqref{eq:v-e<1}}<\frac{1}{2}n^{1-\varepsilon''}
$$ 
with probability $1-n^{-\omega(1)}$, for some constant $\varepsilon''>0$. 

We conclude that, 
by the union bound, with probability $1-n^{-\omega(1)}$, the number of $v\in[n]$ such that
\begin{itemize}
\item either $v\in \closure_{s_{D-d}}(\overline{u})$,
\item or $v\notin\closure_{s_{D-d}}(\overline{u})$ and the $s_{D-d+1}$-closure type $\mathbf{t}=F[(1,\ldots,k+1)]$ of $(\overline{u},v)$ has $|V(F)|>k+1$,
\item or $v\notin\closure_{s_{D-d}}(\overline{u})$ is adjacent to at least one vertex in~$\overline{u}$,
\end{itemize}
is at most $n^{1-\varepsilon''}$.

Let $\mathbf{t}^*\in\closuretypes_{k+1}^{(s_{D-d+1})}$ have $k+1$ vertices,
 where  $k+1$ is an isolated vertex in $\mathbf{t}^*$. 
By the inductive assumption, with probability $1-n^{-\omega(1)}$ for every $v\notin\overline{u}$ such that $\closuretypeof_{s_{D-d+1}}(\overline{u},v)=\mathbf{t}^*$, we have that $|\tau'(\overline{u},v)-\varphi'(\mathbf{t}^*)|\leq n^{-\varepsilon'}$.

Finally, we conclude that, with probability $1-n^{-\omega(1)}$, 
\begin{align*}
|\tau(\overline{u})-\varphi'(\mathbf{t}^*)|
&=
\left|\frac{1}{n}\left(\sum_{v:\,\closuretypeof_{s_{D-d+1}}(\overline{u},v)=\mathbf{t}^*}\tau(\overline{u})\right)-\varphi'(\mathbf{t}^*)+O(n^{-\varepsilon''})\right|\\
&\leq
\left|\frac{n-O(n^{1-\varepsilon''})}{n}\varphi'(t^*)-\varphi'(\mathbf{t}^*)+O(n^{-\varepsilon''})\right|+n^{-\varepsilon'}\\
&\leq n^{-\varepsilon'}+O(n^{-\varepsilon''}).
\end{align*}
Letting $\varphi(\mathbf{t})=\varphi'(\mathbf{t}^*)$ and $\varepsilon=\min\{\varepsilon',\varepsilon''\}/2$ completes the proof.

\paragraph{Local average.} $\tau(\overline{u})=\frac{1}{|N(u_1)|}\sum_{v:\,\mathrm{E}(v,u_1)}\tau'(\overline{u},v)$ is a term of depth $d$, where, as in the case of global average, there exist $\varepsilon'>0$ and $\varphi':\closuretypes^{(s_{D-d+1})}_{k+1}\to\mathbb{R}_{\geq 0}$ such that, for every closure type $\mathbf{t}\in\closuretypes^{(s_{R-d+1})}_{k+1}$ and every tuple $(\overline{u},v)$,~\eqref{eq:mean_inductive_assumption_prob} holds.
As above, we fix $\overline{u}\in([n])_k$ and assume that the event $\closuretypeof_{s_{D-d}}(\overline{u})=\mathbf{t}_0$ holds for some $\mathbf{t}_0\in\closuretypes_k^{(s_{D-d})}$.

Since here we normalise by a factor which is typically $n^{1-\alpha}$, we should take into account some atypical extensions as well, and not only the extensions by an isolated vertex  On the other hand, non-neighbours of $u_1$ do not contribute to the expression for $\tau$, and the closure types of $(\overline{u},v)$, where $v$ is adjacent to $u_1$, have $o(n^{1-\alpha})$ representatives when the closure of $(\overline{u},v)$ is non-trivial. More formally, using the same argument as in the case of global average, since non-trivial dense pairs contribute negatively to the extension count as in~\eqref{eq:v-e<1}, we get that,  for a sufficiently small contant $\varepsilon''>0$, due to Theorem ~\ref{thm:extensionexpectation} and by the union bound, with probability $1-n^{-\omega(1)}$, the number of neighbours $v\in[n]$ of $u_1$ such that 
\begin{itemize}
\item either $v\in \closure_{s_{D-d}}(\overline{u})$, 
\item or $v\notin\closure_{s_{D-d}}(\overline{u})$ and the $s_{D-d+1}$-closure type $\mathbf{t}$ of $(\overline{u},v)$ has $|V(\mathbf{t})|>k+1$, 
\item or $v\notin\closure_{s_{D-d}}(\overline{u})$ is adjacent to at least two vertices in $\overline{u}$,
\end{itemize}
is at most $n^{1-\alpha-\varepsilon''}$. 

Let $\mathbf{t}^*\in\closuretypes_{k+1}^{(s_{D-d+1})}$ have $k+1$ vertices, where the vertex $k+1$ is only adjacent to $1$.   By the inductive assumption, with probability $1-n^{-\omega(1)}$ for every $v\notin\overline{u}$ such that $\closuretypeof_{s_{D-d}}(\overline{u},v)=\mathbf{t}^*$, we have that $|\tau'(\overline{u},v)-\varphi'(\mathbf{t}^*)|\leq n^{-\varepsilon'}$. Moreover, by Theorem ~\ref{thm:extensionexpectation}, the vertex $u_1$ has $n^{1-\alpha}\pm n^{1-\alpha-\varepsilon'''}$ neighbours with probability $1-n^{-\omega(1)}$, for some $\varepsilon'''>0$. 

Let $\varphi(\mathbf{t})=\varphi'(\mathbf{t}^*)$ and $\varepsilon=\min\{\varepsilon',\varepsilon'',\varepsilon'''\}/2$. We get that, with probability $1-n^{-\omega(1)}$, 
\begin{align*}
|\tau(\overline{u})-\varphi(\mathbf{t})|=|\tau(\overline{u})-\varphi'(\mathbf{t}^*)|
\leq n^{-\varepsilon'}+O\left(n^{-\varepsilon''}+n^{-\varepsilon'''}\right)<n^{-\varepsilon},
\end{align*}
as needed.

\paragraph{$\supl$ elimination.} $\tau(\overline{u})=\max_v\tau'(\overline{u},v)$ is a term of depth $d$. By the induction hypothesis, there exist $\varepsilon'>0$ and $\varphi':\closuretypes^{(s_{D-d+1})}_{k+1}\to\mathbb{R}_{\geq 0}$ as in the previous two cases. Fix $\overline{u}\in([n])_k$ and assume that the event $\closuretypeof_{s_{D-d}}(\overline{u})=\mathbf{t}_0$ holds for some 
$$
 \mathbf{t}_0=:F_0[(1,\ldots,k)]\in\closuretypes_k^{(s_{D-d})}.
$$

\begin{definition}[Set of achievable types $\closuretypes'$]
Let $\closuretypes'\subset\closuretypes_{k+1}^{(s_{D-d+1})}$ be the set of all $s_{D-d+1}$-closure types $F[(1,\ldots,k+1)]$ such that 
\begin{itemize}
\item $F\upharpoonright [k]=F_0\upharpoonright [k]$ and $V(F_0)\cap V(F)=[k]$, 
\item $\closure_{s_{D-d+1}}^{F\cup F_0}([k+1])=F$,
\item the pair $(F_0,F\cup F_0)$ is sparse. 
\end{itemize}
\end{definition}

We then fix $v\in[n]$ and show that $\mathbf{t}_0$ determines the type of $(\overline{u},v)$, in each of the following three scenarios. 

\begin{itemize}
\item $v\in\overline{u}$ (say $v=u_i$). In this case, 
$$
\closure_{s_{D-d+1}}(\overline{u},v)=\closure_{s_{D-d+1}}(\overline{u})\subseteq \closure_{s_{D-d}}(\overline{u}).
$$
Therefore, $\mathbf{t}_0$ identifies the $s_{D-d+1}$-closure type $\mathbf{t}_i$ of $(\overline{u},v)$. By the induction hypothesis, we know that $\tau'(\overline{u},v)$ is $n^{-\varepsilon'}$-close to $\varphi'(\mathbf{t}_i)$.
\item $v\in\closure_{s_{D-d}}(\overline{u})\setminus\overline{u}$. Then, since  $k+1\leq W$, we get that 
$$
s_{D-d}\geq\ell(W,s_{D-d+1})\geq\ell(k+1,s_{D-d+1}).
$$
Since the event $\mathcal{L}$ holds, we can assume that $\closure_{s_{D-d+1}}(\overline{u},v)$ has at most $s_{D-d}$ vertices. By Fact ~\ref{fact:closure-intersection}, we get that 
$$
\closure_{s_{D-d+1}}(\overline{u},v)\subseteq\closure_{s_{D-d}}(\overline{u}).
$$
Therefore, $\mathbf{t}_0$ identifies the $s_{D-d+1}$-closure type of $(\overline{u},v)$. We denote it by $\mathbf{t}(v)$.  
 By the induction hypothesis, we know that $\tau'(\overline{u},v)$ is $n^{-\varepsilon'}$-close to $\varphi'(\mathbf{t}(v))$.
\item $v\notin\closure_{s_{D-d}}(\overline{u})$. As in the case of global average, we have that 
$$
F'_2:=\closure_{s_{D-d+1}}(\overline{u},v)\cup\closure_{s_{D-d}}(\overline{u})
$$ 
is a sparse extension of 
$$
F'_1:=\closure_{s_{D-d}}(\overline{u}).
$$
So, there exists $\mathbf{t}'\in\closuretypes'$ such that $(\overline{u},v)$ has $s_{D-d+1}$-closure type $\mathbf{t}'$. 
 By the inductive assumption,  $\tau'(\overline{u},v)$ is $n^{-\varepsilon'}$-close to $\varphi'(\mathbf{t}')$ with probability $1-n^{-\omega(1)}$. 
\end{itemize}
Recall that the set $\closuretypes'\subset \closuretypes_{k+1}^{(s_{D-d+1})}$ is finite. Define  $\varphi$ to be the maximum of
\[
\max_{i\in[k]}\varphi'(\mathbf{t}_i),
 \max_{v\in\closure_{s_{D-d}}(\overline{u})\setminus\overline{u}}\varphi'(\mathbf{t}(v)), 
\max_{\mathbf{t}'\in\closuretypes'}\varphi'(\mathbf{t}')
 \]
 where $\mathbf{t}_i$, $\mathbf{t}(v)$, and $\mathbf{t}'$ are defined in the bullet items above.
 If all closure types from $\closuretypes'$ are achievable by $(\overline{u},v)$, then $\varphi=\varphi'(\closuretypeof_{s_{D-d+1}}(\overline{u},v))$ for some $v$. In this case, on the one hand, with probability $1-n^{-\omega(1)}$,
$$
 \tau(\overline{u})\geq\tau'(\overline{u},v)\geq \varphi'(\closuretypeof_{s_{D-d+1}}(\overline{u},v))-n^{-\varepsilon'}=\varphi-n^{-\varepsilon'}.
$$
On the other hand, with probability $1-n^{-\omega(1)}$,
$$
 \tau(\overline{u})\leq\max_v(\varphi'(\closuretypeof_{s_{D-d+1}}(\overline{u},v))+n^{\varepsilon'})=\varphi+n^{-\varepsilon'}.
$$
Therefore, in order to complete the proof, we need to show that $\whp$ all closure types from $\closuretypes'$ are achievable by $(\overline{u},v)$ for $v\notin\closure_{s_{D-d}}(\overline{u})$.

Fix $F[(1,\ldots,k+1)]\in\closuretypes'$. 
By Theorem ~\ref{thm:extensionexpectation}, with probability $1-n^{-\omega(1)}$ there exists an $(F_0,F\cup F_0)$-extension $F'_2\cup F'_1$ of $F'_1:=\closure_{s_{D-d}}(\overline{u})$ such that
\begin{itemize}
\item $F'_2$ is a $([k],F)$-extension of $\overline{u}$. Recall from the last bullet item of Definition \ref{def:extensions} that this means 
an extension that agrees with $F$ on all edges outside of~$[k]$. 
\item $F'_2$ does not have $s_{D-d+1}$-dense extensions $F'_3$ in $\mathbf{G}_n$ with $V(F'_3)\setminus (V(F'_1)\cup V(F'_2))\neq\varnothing$ and $E(F'_3)\cap((V(F'_2)\setminus V(F'_1))\times(V(F'_3)\setminus (V(F'_1)\cup V(F'_2)))\neq\varnothing$.
\end{itemize}
Since  $F'_2$ is a $([k],F)$-extension of $\overline{u}$, there is an isomorphism of extensions $F\to F'_2$. Let $v$ be the image of $k+1$ under this isomorphism.  
 Let us show that $\closuretypeof_{s_{D-d+1}}(\overline{u},v)=F[(1,\ldots,k+1)]$. Note that $F'_3\setminus F'_2$ cannot be entirely inside $F'_1$ by the definition of $\closuretypes'$. It then suffices to prove that $F'_2$ does not have $s_{D-d+1}$-extensions that have some vertices outside of $F'_2\cup F'_1$. Assume the opposite --- let $F'_3$ be such an extension of $F'_2$. We also know that there are no edges between $V(F'_3)\setminus(V(F'_1\cup F'_2))$ and $V(F'_2)\setminus V(F'_1)$. Since the pair $(F'_3,F'_2)$ is $s_{D-d+1}$-dense, by Fact~\ref{fact:dense-sub-extension}, we get that $F'_1\cup(F'_3\setminus(F'_1\cup F'_2))$ is an $s_{D-d+1}$-dense extension of $F'_1$ --- condradicting the fact that $F'_1=\closure_{s_{D-d}}(\overline{u})$. This completes the proof of the lemma.

\end{proof}

\end{document}

%% file: macros.tex
\usepackage{bm,bbm}
\usepackage{algorithmic}
\usepackage{graphicx}
\usepackage{textcomp}
\usepackage{paralist}
\usepackage{xcolor}
\usepackage{thmtools}

\usepackage[utf8]{inputenc}

\usepackage{mathtools}
\usepackage{csquotes}
\usepackage[inline]{enumitem}
\usepackage{nicefrac}
\usepackage[capitalize,noabbrev]{cleveref}
\usepackage{amsthm}

\usepackage{xcolor}

\DeclareMathOperator*\Mean{\mathrm{Avg}}
\DeclareMathOperator*\LMean{\mathrm{LAvg}}

\newcommand{\supl}{\mathrm{Max}}
\newcommand{\infl}{\mathrm{Min}}

\newcommand\Aggo[1]{\mathrm{Agg}{[}#1]}
\newcommand{\Agglang}{\mathrm{Agg}}

\newcommand{\whp}{\mathrm{w.h.p.}}

\newcommand{\atomictypes}{\mathcal{A}}

\newcommand{\atppercent}{\%}

\newcommand{\closure}{\mathrm{cl}}
\newcommand{\atomictypeof}{\mathrm{Atype}}
\newcommand{\closuretype}{\mathcal{C}}
\newcommand{\closuretypes}{\mathcal{C}} 
\newcommand{\closuretypeof}{\mathrm{Ctype}}

\newcommand{\functionclass}{{\mathcal{F}}}

\newcommand{\functionclasspoly}{\functionclass_{\mathrm{rlpoly}}}

\newcommand{\functionclassrellip}{\functionclass_{\mathrm{rellip}}}

\newcommand{\Lipsch}{{\functionclass_{LP}}}

\newcommand{\R}{\ensuremath{\mathbb{R}}}
\newcommand{\reals}{\R}

\newcommand{\N}{\ensuremath{\mathbb{N}}}
\newcommand{\nats}{\N}

\newtheorem{theorem}{Theorem}[section]
\newtheorem{fact}[theorem]{Fact}
\newtheorem{claim}[theorem]{Claim}

\newtheorem{corollary}[theorem]{Corollary}
\newtheorem{lemma}[theorem]{Lemma}

\theoremstyle{remark}
\newtheorem{remark}[theorem]{Remark}

\theoremstyle{definition}
\newtheorem{definition}[theorem]{Definition}

\DeclareMathOperator\Ne{\mathcal N}

\let\Pr\undefined
\DeclareMathOperator*\Pr{\mathbb{P}}

\newcommand\lra\leftrightarrow
\newcommand\Lra\Leftrightarrow
\newcommand\dsb\doublesqbracket

\DeclarePairedDelimiter{\doublesqbracket}{\llbracket}{\rrbracket}
\DeclarePairedDelimiter{\abs}{|}{|}

\def\erdosrenyi{Erd\H{o}s--R\'enyi}

\newcommand{\NN}{\mathbb{N}}
\newcommand{\GG}{\mathbb{G}}

%% file: abstract.tex
\begin{abstract} Concentration results say that a sequence of random variables becomes progressively concentrated around the mean. Such results are  common in the study
of functions of random graphs.  We introduce a real-valued logic with various aggregate operators on graphs,  including summation, and prove that every term in the language, seen as a random variable on random graphs within the classical {\erdosrenyi} random graph model, is concentrated.  We prove this for dense and sparse variants of {\erdosrenyi} graphs. On the one hand, our results extend the line of work originating with Fagin and Glebskii et al. on zero-one laws
for dense random graphs, as well as the zero-one law of Shelah and Spencer for sparse random graphs.  On the other hand, they can be seen as a meta-theorem for inferring
concentration results on random graphs, and we give examples of such applications.
\end{abstract}